\newtheorem{theorem}{Theorem}
\newtheorem{proposition}{Proposition}
\newtheorem{assumption}{Assumption}
\newtheorem{remark}{Remark}%
\newtheorem{lemma}{Lemma}%
\newtheorem{corollary}{Corollary}%
\newcommand{\sm}[2]{\begin{smallmatrix}\item1\\\item2 \end{smallmatrix}}
\newcommand\ddfrac[2]{\frac{\displaystyle \item1}{\displaystyle \item2}}
\newcommand{\mc}[1]{\mathbb{\item1}}
\newcommand{\cK}{\mathcal{K}}
\newcommand{\cN}{\mathcal{N}}
\newcommand{\cO}{\mathcal{O}}
\newcommand{\cR}{\mathcal{R}}
\newcommand{\cS}{\mathcal{S}}
\newcommand{\cV}{\mathcal{V}}
\newcommand{\cW}{\mathcal{W}}
\newcommand{\boldm}{\mathbf{m}}
\newcommand{\bp}{\mathbf{p}}
\newcommand{\bx}{\mathbf{x}}
\newcommand{\by}{\mathbf{y}}
\newcommand{\bz}{\mathbf{z}}
\newcommand{\bI}{\mathbf{I}}
\newcommand{\bP}{\mathbf{P}}
\newcommand{\bW}{\mathbf{W}}
\newcommand{\R}{\mathbb{R}}
\newcommand{\simplex}{S_1(d)}
\newcommand{\set}{S}
\newcommand{\boldOne}{\mathbf{1}}
\newcommand{\cRperp}{\cR^{\perp}}
\newcommand{\lmax}{\lambda_{\max}}
\newcommand{\lminp}{\lambda_{\min}^{+}}
\newcommand{\rgam}{_{r,\gamma}}
\newcommand{\eqdef}{:=}
\newcommand{\transpose}{\ensuremath{^{\top}}}
\newcommand{\erdos}{Erd\H{o}s--R\'{e}nyi}
\DeclareMathOperator*{\argmin}{arg\,min}
\begin{document}

\title[Wasserstein Barycenter on Time-Varying Networks]{Decentralized Convex Optimization on Time-Varying Networks with Application to Wasserstein Barycenters}

\author[1]{\fnm{Olga} \sur{Yufereva}}\email{olga.o.yufereva@gmail.com}

\author[2]{\fnm{Michael} \sur{Persiianov}}\email{persiianov.mi@phystech.edu}

\author[3]{\fnm{Pavel} \sur{Dvurechensky}}\email{pavel.dvurechensky@wias-berlin.de}

\author[2,4,5]{\fnm{Alexander} \sur{Gasnikov}}\email{gasnikov@yandex.ru}

\author[6]{\fnm{Dmitry} \sur{Kovalev}}\email{dakovalev1@gmail.com}

\affil[1]{\orgname{N.\,N. Krasovski Institute of Mathematics and Mechanics}, \orgaddress{\city{Yekaterinburg}, \country{Russia}}}

\affil[2]{\orgname{Moscow Institute of Physics and Technology}, \orgaddress{\city{Dolgoprudny}, \country{Russia}}}

\affil[3]{\orgname{Weierstrass Institute for Applied Analysis and Stochastics}, \orgaddress{\city{Berlin}, \country{Germany}}}

\affil[4]{\orgname{Skoltech}, \orgaddress{\city{Moscow}, \country{Russia}}}

\affil[5]{\orgname{Institute of information transmission problems}, \orgaddress{\city{Moscow}, \country{Russia}}}

\affil[6]{\orgname{Universit\'e catholique de Louvain (UCL)}, \orgaddress{\city{Louvain-la-Neuve}, \country{Belgium}}}

\maketitle

\begin{abstract}

Inspired by recent advances in distributed algorithms for approximating Wasserstein barycenters, we propose a novel distributed algorithm for this problem. The main novelty is that we consider time-varying computational networks, which are motivated by examples when only a subset of sensors can make an observation at each time step, and yet, the goal is to average signals (e.g., satellite pictures of some area) by approximating their barycenter. We embed this problem into a class of non-smooth dual-friendly distributed optimization problems over time-varying networks, and develop a first-order method for this class. We prove non-asymptotic accelerated in the sense of Nesterov convergence rates and explicitly characterize their dependence on the parameters of the network and its dynamics. In the experiments, we demonstrate the efficiency of the proposed algorithm when applied to the Wasserstein barycenter problem.

\end{abstract}

\keywords{Distributed optimization \and  dual oracle \and  Wasserstein barycenter \and  time-varying networks \and  consensus problem
}

\section{Introduction}

Optimal transport (OT) \cite{monge1781memoire,kantorovich1942translocation} is getting more and more attention from the machine learning and optimization community motivated by a long list of applications such as unsupervised learning, semi-supervised learning, clustering, text classification, image retrieval, and others, see \cite{peyre2019computational} and references therein.
Given a basis space (e.g., pixel grid) and a transportation cost function (e.g., squared Euclidean distance), the OT approach defines a distance between two objects (e.g., images), modeled as two probability measures on the basis space, as the minimal cost of transportation of the first measure to the second.
Such distances, in particular, the Wasserstein distance, naturally capture the geometry of the data since they are invariant to shifts and rotations. In particular, the Frech\'e mean with respect to the Wasserstein distance, called Wasserstein barycenter (WB) \cite{agueh2011barycenters}, allows \cite{cuturi2014fast,barrio1999central} to reconstruct a template image from its random observations obtained by random shifts and rotations.

The benefits of the use of WB in real-world applications are sometimes outweighted by the large computational burden introduced by their definition. The computation of the Wasserstein distance is already a large-scale optimization problem, and to calculate the WB, one introduces a second optimization level as the WB minimizes the sum of Wasserstein distances to a set of probability measures. At this point, distributed optimization algorithms turned out to be efficient to scale-up the computations of the WB when the data is distributedly stored by a computational network \cite{staib2017parallel,uribe2018distributed,dvurechenskii2018decentralize,kroshnin2019complexity,dvinskikh2019primal,krawtschenko2020distributed,dvinskikh2021decentralized,rogozin2021decentralized}. On the other hand, the nature of the data-generating process may be distributed itself. In particular, it may be impossible to collect the data even in one datacenter, especially if the data processing has to respect the privacy of the individual data. Another example is a network of sensors that measure signals following some distributions and for the analysis purposes the whole network needs to find the WB of these distributions by peer-to-peer communications. In this case, decentralized %
algorithms are especially useful. Moreover, algorithms adapted to time-varying networks of sensors or computing devices are required. For example, some nodes of the network may be disconnected due to failures or, e.g., when the node is a satellite that observes certain area, the observation is available only for a certain period of time. At the same time, the development of decentralized distributed algorithms for general optimization problems is important on its own since the WB problem represents only one, yet important, example where such algorithms are efficient.

Summarizing, the WB problem is important for applications, yet requires to solve a large-scale optimization problem. For that problem, and other large-scale problems, decentralized distributed optimization algorithms on time-varying networks has to be developed. In this paper, we develop a general decentralized accelerated gradient method on time-varying networks and apply it to the WB problem.
\subsection{Related work}
In general, decentralized distributed optimization is an emerging and actively developed branch of optimization, see the recent survey \cite{gorbunov2022recent}.  
Our main focus in this paper is on the setting of decentralized methods working on time-varying networks, for which in the smooth and strongly convex case efficient algorithms were recently proposed in \cite{rogozin2021accelerated,li2021accelerated,kovalev2021lower} (primal oracle) and \cite{kovalev2021adom} (dual oracle). Unfortunately, these methods are not directly applicable in our case since the WB problem is not smooth, not strongly convex, and has simple constraints. At the same time, the WB problem has tractable dual oracle, which motivated us to extend the ADOM algorithm of \cite{kovalev2021adom} to the non-smooth, non-strongly convex setting with simple constraints.

Starting with \cite{uribe2018distributed,dvurechenskii2018decentralize} it was observed that decentralized methods with dual oracle are well suited for the WB problem. In the cycle of subsequent papers \cite{dvurechenskii2018decentralize,kroshnin2019complexity,dvinskikh2019primal,krawtschenko2020distributed,dvinskikh2021decentralized} different decentralized accelerated (randomized) algorithms were proposed for dual WB problem. In \cite{dvinskikh2020improved} the authors propose to reformulate the WB problem as a bilinear saddle-point problem~(SPP). Decentralized algorithm for this problem was proposed in \cite{rogozin2021decentralized}. Unfortunately, all these algorithms are designed for static networks that do not change over time. 

Moreover, their extensions to time-varying networks seem to be hardly possible. At the core of these algorithms lies the reformulation of the WB problem as a problem with linear constraints ensuring the consensus between the network nodes, and then solving the dual for that problem.
If communication network changes over time, then the affine-consensus constraints also change over time, and so does the dual problem. This essentially requires to solve a family of dual problems, which is not possible by the accelerated gradient methods or the Mirror-Prox algorithm as in \cite{dvurechenskii2018decentralize,kroshnin2019complexity,dvinskikh2019primal,krawtschenko2020distributed,dvinskikh2021decentralized,rogozin2021decentralized}. 

The recent work \cite{bishop2021network} considers the WB problem on time-varying networks and analyses a simple consensus method. The main difference with our paper is that they prove asymptotic convergence rather than convergence rates, and they consider possibly continuous measures on $\R$ unlike our setting of discrete measures on $\R^n$. The paper~\cite{wu2019fenchel} proposes Fenchel dual gradient methods for distributed convex optimization over time-varying networks. The main difference is that we propose an accelerated algorithm with better complexity, yet under a stronger assumption on the network (they assume the $B$-connectivity of the network). %

\subsection{Our contributions}
Since the WB problem has an efficient dual oracle, a natural idea is to use ADOM \cite{kovalev2021adom} that is an optimal decentralized algorithm for smooth strongly convex unconstrained problems for time-varying networks with dual oracle. 
ADOM can be considered as projected accelerated algorithm with inexact consensus-based projection applied to specific dual reformulation of the distributed optimization problem.
Since the WB problem
\begin{itemize}
    \item (Smoothness) is not smooth;
    \item (Constraints) is not unconstrained; as the space of probability measures  has simple constraints;
    \item (Strongly convex) is not strongly convex;
\end{itemize}
the direct application of ADOM to the WB problem is not possible. Moreover, not only WB problem, but also general non-smooth, non-strongly-convex constrained optimization problems lack efficient algorithms on time-varying networks.

The first main result of this paper is a generalization of ADOM for general $\gamma$ strongly convex decentralized optimization problems  with simple constraints on time-varying networks; solving it numerically requires $\cO\left(\frac{\lmax}{\lminp}\frac{1}{\sqrt{\gamma\varepsilon}}\ln \frac{1}{\gamma\varepsilon}\right)$ iterations. %
The second contribution is the application to the WB problem on time-varying networks; where the corresponding iteration number becomes $\cO\left(\frac{1}{\varepsilon}\ln \frac{1}{\varepsilon}\right)$. 
The main ideas are the following. To obtain the strong convexity we use the regularization of the primal problem by the entropy~\cite{peyre2019computational}. To deal with the constraints and non-smoothness, we use special regularization of the dual problem. This regularization goes back to \cite{devolder2012double,gasnikov2016efficient} and by infimal convolution can be considered as the Moreau--Yosida smoothing of the primal problem \cite{rockafellar1997convex,lemarechal1997practical}. We emphasize that the proposed dual regularization (primal smoothing) was earlier investigated only for non time-varying networks \cite{uribe2020dual}. For time-varying networks and problems with simple constraints the analysis is different.  

\paragraph*{Paper organization} Section \ref{sec: prelim} presents preliminaries and basic definitions for a general distributed optimization problem. Section \ref{sect: main results} states our main results, i.e. the method, convergence rates and the parameter estimation. Section \ref{sec: was} introduces the Wasserstein barycenter problem and specifies main results for the particular case. Section \ref{app: experiments} shows numerical experiments that illustrate and verify the theoretical results. All the proofs are in Appendices.

\paragraph*{Notation}
We utilize the following dimensions:
\begin{itemize}
    \item $m$ for the number of individual devices (nodes),
    \item $d$ for the dimension of a data in each device.
\end{itemize}
We use bold or normal font ($\bx$ or  $x$) for different spaces $\bx \in (\R^d)^m, \ x\in \R^d$. The $l$-th component of a vector $x\in \R^d$ is denoted by $[x]_l$ and $l$-th component of $\bx\in (\R^d)^m$ is denoted by  $[\bx]_l$ which is the corresponding vector from $\R^d$.

 Let $\boldOne$ denote a column vector with all entries equal to $1$. The $d$-dimensional simplex is denoted $S_1(d)$, that means $S_1(d) := \{p\in [0,1]^d \mid p\transpose \boldOne = 1\}$. For matrices $A$ and $B,$ $A \circ B$ and $A/B$ stands for the element-wise product and division, respectively. Another product we define as follows $\left\langle M,X \right\rangle  :=  \sum_{i=1}^{d} \sum_{j=1}^{d} M_{ij} X_{ij}$. %
 
 Abbreviation WB means Wasserstein barycenter and ADOM refers to Accelerated Decentralized Optimization Method proposed in \cite{kovalev2021adom}.

\section{Decentralized optimization}
\label{sec: prelim}

\subsection{Decentralized computation problem}
Decentralized computation simulates computation on distributed individual devices. The devices are considered as nodes of an undirected connected graph called a {\em communication network}. It means that each node can perform computations based only on its local data and the data of its neighbors in communication network. For a convex closed set $\set$ and convex functions $f_i$
 {\em decentralized computation of the following optimization problem}
\begin{equation}
\label{eq-generic_decentralized_op}
	 \min\limits_{x\in \set}\sum \limits_{i=1}^m f_i(x)
\end{equation}
requires numerical computation assuming that each function $f_i$ is stored on the corresponding node $i\in [m]\eqdef \{1,2,\ldots,m\}$. Such approach brings us to an effective reformulation of the optimization problem.

\subsubsection{Consensus condition}

Since each computational node carries its own local data approximation, we can substitute formally different variables $x_i$ for the mutual argument $x$ in \eqref{eq-generic_decentralized_op} assuming that they belong to the so-called {\it consensus space}. 
It means the new variables $x_i$ must eventually coincide with each other and with the wanted barycenter. We obtain an equivalent optimization problem in the following form:
\begin{gather}
 \min\limits_{\mathbf{x}\in \cS}F(\mathbf{x}) =  \min\limits_{\mathbf{x}\in \cS}\sum\limits_{i=1}^{m} f_{i}([\bx]_i), \label{eq-generic_consensus_op}\\
\text{where } \cS = \left\{\bx=([\bx]_1,\ldots,[\bx]_m)\in (S)^m \mid [\bx]_1 = \ldots=[\bx]_m\right\}. \notag
\end{gather}
Here $i$-th component $[\bx]_i$ is a corresponding $d$-dimensional vector. 

\subsubsection{Time-varying communication network}

We consider $m$ distributed devices that seek to reach a consensus solution of an optimization problem. The devices are connected via an $m$-node network that changes over time. At each time step $n$ we denote Laplacian of the corresponding network by $\hat W_n$. In general, it suffices to take any $\hat W_n$ satisfying the following:
\begin{enumerate}
	\item $\hat{W}_n$ is symmetric and positive semi-definite,
	\item $[\hat{W}_n]_{i,j} \neq 0$ if and only if $(i,j)$ are connected by the network,
	\item $\ker \hat{W}_n = \{(x_1,\ldots,x_m) \mid x_1 = \ldots = x_m\}$.
\end{enumerate}
Further we use {\em communication matrix} that is the block matrix $\bW_n = \hat{W}_n \otimes I_d$. %
Hence, decentralized communication of each vector $x_i$ stored on the $i$-th node at a time step~$n$ can be represented by  multiplication of the $md$-dimensional vector $(x_1, \ldots, x_m)$ and matrix $\bW_n$: indeed,  if  $\by = \bW_n \bx$, then it yields 
\begin{equation*}
[\by]_i = \sum \limits_{j=1}^m [\hat W_n]_{i,j} [\bx]_j = \sum \limits_{j \in \cN_i} [{\hat W}_n]_{i,j} [\bx]_j,
\end{equation*}
where $\cN_i$ is the set of the neighboring nodes for the node $i$ according to the communication network at $n$-th iteration.
Thus, for each node $i$, vector $[\by]_i$ is a linear combination of vectors $[\bx]_j$, stored at the neighboring nodes $j \in \cN_i$. %

The considered algorithms require these communication matrices $\bW_n$ to have conditional numbers bounded for all $n \in \{0,1,2\ldots\}$. Namely, we utilize the following assumption.
\begin{assumption}
\label{assumption-lambdas}
Let there exist constants $0 < \lminp < \lmax$ such that
\begin{equation*}%
	\lminp \leq \lminp({\bW}_n) \leq \lmax({\bW}_n) \leq \lmax \qquad \forall n,
\end{equation*}
where $\lminp({\bW}_n)$ is the smallest positive eigenvalue of $\bW_n$ and $\lmax({\bW}_n)$ is the biggest one.
\end{assumption}
A condition number of the matrix ${\bW}_n$ is given as $\frac{\lmax({\bW}_n)}{\lminp({\bW}_n)}$ and relates to the  connectivity of a network;  it appears in convergence rates of many decentralized algorithms. %

\section{Main results}
\label{sect: main results}

One of our main features is that general convex functions of optimization problem can be defined on a convex set $S\subseteq \R^d$ instead of the entire $\R^d$, that was crucial for duality of \eqref{eq: opt prob for ADOM setup with bx^*} and \eqref{eq-prob-H*}. %
First of all, we study the case of strongly convex functions (Theorem~\ref{theorem: strongly convex functions}).
Then we see that similarly we can approximate convex functions (Corollary~\ref{cor: convex functions}). We obtain an important Theorem~\ref{th: wasserstein} by applying Theorem~\ref{theorem: strongly convex functions} to the Wasserstein barycenter problem; in this particular setup one can estimate parameters more precisely. In the next sections we introduce necessary definitions, state Theorem~\ref{th: wasserstein}, and provide related numerical experiments. All the proofs are in the Appendices.

Consider $\gamma$ strongly convex case, i.e. the following decentralized optimization problem %
\begin{gather}
    \bx_{\gamma}^* =  \argmin\limits_{\bx\in \cS} F^{\gamma}(\bx) = \argmin\limits_{\bx\in \cS} \sum\limits_{i=1}^{m} f_i^{\gamma}([\bx]_i)= \argmin\limits_{x\in S} \sum\limits_{i=1}^{m} f_i^{\gamma}(x), \label{eq: op-f gamma}\\
\mbox{where } \cS = \{\bx=([\bx]_1,\ldots,[\bx]_m)\in (S)^m \subset (\R^d)^m \mid [\bx]_1 = \ldots=[\bx]_m\}, \notag
\end{gather}
functions $f_i^{\gamma}$ are  $\gamma$ strongly convex, differentiable, and defined on a convex set~$S$. Recall that $i$-th component $[\bx]_i$ is a corresponding $d$-dimensional vector. 
\begin{theorem}
\label{theorem: strongly convex functions}
Let $S\subset \R^d $ be a convex set, let functions $f_i^{\gamma}\colon S \to \R$ of the problem~\eqref{eq: op-f gamma} be $\gamma$ strongly convex and  differentiable,
let $\bW_n$ be the $n$-th communication matrix satisfying Assumption~\ref{assumption-lambdas} for some $\lminp, \lmax>0$. 
For any $r>0$, after $n$ iterations of Algorithm \ref{alg:mod-adom} we obtain  $\bx^{n}_{r,\gamma} =  \nabla H^*(\bz^n_{g})$ that provides:
\begin{enumerate}
        \item consensus condition approximation: for each $i$ and $j$
        \begin{equation}
            \label{eq: theorem: consensus}
            \left\|\left[\bx^{n}_{r,\gamma}\right]_i - \left[\bx^{n}_{r,\gamma}\right]_j\right\|^2_2\leq C_1 \left(1- \frac{\lminp}{7\lmax} \sqrt{\frac{r\gamma}{1+r\gamma}}\right)^n;
        \end{equation}
        \item value approximation:
        \begin{equation}
        \label{eq: theorem: value approximation}
              F^{\gamma}(\bx^{n}_{r,\gamma})- \min\limits_{\bx\in \cS}F^{\gamma}(\bx)\leq \frac{r}{2(1+r\gamma)}mK^2 + C_2 \left(1- \frac{\lminp}{7\lmax} \sqrt{\frac{r\gamma}{1+r\gamma}}\right)^{n/2};
        \end{equation}
    \end{enumerate}
where $K$ is such that $\|\nabla f^{\gamma}_i(x)\|_2< K$ for each $i$ and for all $x$ from $\varepsilon/\gamma$-neighborhood of the solution $\argmin\limits_{x\in S}\sum\limits_{i=1}^{m}f^{\gamma}_i(x)$.  %
The parameters are $C_1=\frac{(1+r\gamma)^2}{2\gamma^2}$, $C_2 = %
\frac{m(1+r\gamma)K}{\sqrt{2}\gamma}\sqrt{\frac{\lmax}{\lminp}} 
    + \frac{m(1+r\gamma)^2}{4r\gamma^2}$.
\end{theorem}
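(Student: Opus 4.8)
The approach is to recognize this as an application of the ADOM framework of \cite{kovalev2021adom} to a carefully regularized dual reformulation, so the proof naturally splits into three stages: (i) set up the dual problem and the regularization, (ii) invoke the convergence guarantee of the underlying accelerated algorithm to control the dual-space error, and (iii) translate that dual error back into the two primal guarantees of the theorem statement. The key objects are the primal function $F^\gamma$ on the consensus set $\cS$, its Fenchel conjugate, and the Moreau--Yosida-type smoothing $H^*$ whose gradient $\nabla H^*$ produces the iterates $\bx^n_{r,\gamma}$. Since each $f_i^\gamma$ is $\gamma$-strongly convex, the conjugate $(f_i^\gamma)^*$ is $1/\gamma$-smooth; the $r$-regularization of the dual then plays the role of making the resulting problem fit the smooth strongly convex template that ADOM requires, with an effective strong convexity parameter that explains the appearance of $\sqrt{r\gamma/(1+r\gamma)}$ inside the contraction factor and the factor $\lminp/(7\lmax)$ mirroring ADOM's rate in the network condition number.

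\textbf{Stage 1: dual setup.} First I would write the Lagrangian dual of \eqref{eq: op-f gamma} using the consensus constraint encoded via the communication matrices $\bW_n$, i.e.\ replace $\bx \in \cS$ by the constraint $\sqrt{\bW}\bx = 0$ (in the time-varying setting this is handled by the ADOM machinery rather than a single fixed matrix). The dual variable $\bz$ lives in the appropriate subspace, and the dual objective is built from $\sum_i (f_i^\gamma)^*$. I would then add the quadratic regularizer of strength controlled by $r$ to the dual — this is exactly the "dual regularization / primal Moreau--Yosida smoothing" the introduction attributes to \cite{devolder2012double,gasnikov2016efficient,uribe2020dual} — and identify the smoothed function $H^*$ so that $\bx^n_{r,\gamma} = \nabla H^*(\bz^n_g)$ is a valid (approximately feasible) primal point. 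The bias term $\frac{r}{2(1+r\gamma)} m K^2$ in \eqref{eq: theorem: value approximation} should fall out of a standard Moreau--Yosida smoothing estimate: the gap between the smoothed and original primal optima is bounded by $r/(2(1+r\gamma))$ times a squared gradient bound, and $K$ is precisely the gradient bound on the relevant neighborhood of the solution, with the factor $m$ from summing over nodes.

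\textbf{Stage 2 and 3: contraction and back-translation.} Next I would apply the linear convergence theorem of ADOM (adapted to this regularized, constrained instance) to conclude that an appropriate Lyapunov/potential function decays geometrically with ratio $1 - \frac{\lminp}{7\lmax}\sqrt{\frac{r\gamma}{1+r\gamma}}$ per iteration. From this single contraction statement I would extract both conclusions: for the consensus bound \eqref{eq: theorem: consensus}, the distance $\|[\bx^n_{r,\gamma}]_i - [\bx^n_{r,\gamma}]_j\|_2^2$ is controlled by the squared norm of $\sqrt{\bW_n}\,\bx^n_{r,\gamma}$, which is part of the decaying potential, giving the clean geometric rate with constant $C_1 = (1+r\gamma)^2/(2\gamma^2)$ (the $(1+r\gamma)^2/\gamma^2$ reflecting the smoothness $1/(\text{effective strong convexity})$ of $H^*$). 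For the value bound \eqref{eq: theorem: value approximation}, I would write $F^\gamma(\bx^n_{r,\gamma}) - \min_{\cS} F^\gamma$ as (smoothing bias) $+$ (optimization error for the smoothed problem), bound the latter by combining the functional suboptimality in the smoothed dual with a term penalizing infeasibility — the infeasibility contributes the $\sqrt{\lmax/\lminp}$ factor and the half-power $n/2$ in the rate, since a distance-to-feasibility of order $\rho^n$ produces a function-value perturbation of order $\rho^{n/2}$ after a $\sqrt{\cdot}$ from Cauchy--Schwarz against the (bounded-by-$K$) gradient. The two pieces of $C_2$ correspond exactly to these two contributions.

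\textbf{Main obstacle.} The delicate point is Stage 3: carefully accounting for the fact that $\bx^n_{r,\gamma}$ is \emph{not} exactly in $\cS$, so $\min_{\cS} F^\gamma$ is compared against a nearly-but-not-exactly-feasible point. One must invoke the gradient bound $K$ on a neighborhood of the solution and verify the iterates actually stay in that $\varepsilon/\gamma$-neighborhood, then use a first-order expansion of $F^\gamma$ around the projection of $\bx^n_{r,\gamma}$ onto $\cS$ to pay only an $O(K \cdot \text{dist})$ price — this is where the $\sqrt{\cdot}$ and hence the $n/2$ exponent enters. Getting the constants $C_1, C_2$ exactly as stated, and in particular tracking the $(1+r\gamma)$ powers through the conjugation and the Moreau envelope, is the bookkeeping-heavy part; the conceptual content is simply ADOM's contraction plus Moreau--Yosida smoothing, but marrying them over time-varying networks with set constraints $S$ is where the real work lies.
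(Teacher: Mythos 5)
Your overall architecture matches the paper's: regularize the dual by $\frac{r}{2}\|\cdot\|_2^2$, recognize this as the Moreau--Yosida envelope $h_i^{r,\gamma}(x)=\inf_{y\in S}\{f_i^{\gamma}(y)+\frac{1}{2r}\|y-x\|_2^2\}$ on the primal side, note that $h_i^{r,\gamma}$ is $\tfrac1r$-smooth and $\tfrac{\gamma}{1+r\gamma}$-strongly convex so that ADOM applies verbatim with $L=1/r$, $\mu=\gamma/(1+r\gamma)$ (giving the contraction factor and, via the argument-convergence bound and the fact that $\bx^*_{r,\gamma}\in\cR$ has equal blocks, the consensus estimate with $C_1$), and split the value gap into a smoothing bias, an optimization error for the envelope, and a nonpositive remainder $H^{r,\gamma}(\bx^*_{r,\gamma})-F^{\gamma}(\bx^*_{\gamma})\le 0$. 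Your attribution of the bias term $\frac{r}{2(1+r\gamma)}mK^2$ to the envelope inequality $F^{\gamma}(\bx)-\frac{r}{2(1+r\gamma)}\|\nabla F^{\gamma}(\bx)\|_2^2\le H^{r,\gamma}(\bx)\le F^{\gamma}(\bx)$ is exactly right, as is the need to verify that the iterates stay in the $\zeta$-neighborhood where $K=K_{\zeta}$ is valid (the paper does this via Lemma~\ref{lem: dist betweem argmins} and Remark~\ref{rem: cond on zeta}).

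The one place your plan diverges, and where it would run into trouble, is the optimization-error term. You propose to handle the near-infeasibility of $\bx^{n}_{r,\gamma}$ by projecting it onto $\cS$ and doing a first-order expansion of $F^{\gamma}$ there. But $F^{\gamma}$ is only defined on $(S)^m$ and is merely differentiable with a locally bounded gradient, so an expansion along the segment from $\bx^{n}_{r,\gamma}$ to its projection is not licensed (the segment need not lie in the domain, and there is no smoothness constant for $F^{\gamma}$). The paper never leaves the envelope for this term: it bounds $H^{r,\gamma}(\bx^{n}_{r,\gamma})-H^{r,\gamma}(\bx^*_{r,\gamma})$ using the $\tfrac{m}{r}$-smoothness of the globally defined $H^{r,\gamma}$, observes that $\nabla H^{r,\gamma}(\bx^*_{r,\gamma})=\bz_g^{\infty}\in\cRperp$ while $\bx^*_{r,\gamma}\in\cR$, so that only the $\cRperp$-component $\bP\nabla (H^{r,\gamma})^*(\bz_g^{n})$ of the iterate enters the linear term, and then controls $\|\bP\nabla (H^{r,\gamma})^*(\bz_g^{n})\|_2$ by Lemma~2 of the ADOM paper; this is the precise source of the $\sqrt{\lmax/\lminp}$ factor and of the exponent $n/2$ (the square root of the geometrically decaying quantity), with $\|\bz_g^{\infty}\|_2\le\sqrt{m}K_{\zeta}$ supplying the $K$ in $C_2$. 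So your intuition about where the half-power and the network factor come from is correct, but the mechanism that realizes it is orthogonality in the dual/consensus geometry plus the projected-gradient lemma, not a projection of the primal iterate onto $\cS$; to complete your Stage~3 you would need to replace that step with the envelope-based argument (or otherwise justify evaluating and expanding $F^{\gamma}$ off its domain).
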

\begin{proof}
See Appendix \ref{proof main theorem}.
\end{proof}
\begin{remark}
\label{rem: numb of iterations}
To reach $\varepsilon$ approximation of \eqref{eq: theorem: consensus} and \eqref{eq: theorem: value approximation} it suffices to take $r\leq \frac{\varepsilon}{2mK^2}$.
 Then the rate of the number of iterations is 
 \[n= \cO\left(\frac{\lmax}{\lminp}\sqrt{\frac{1+r\gamma}{r\gamma}}\ln \frac{C_2}{\varepsilon}\right) = \cO\left(\frac{\lmax}{\lminp}\frac{1}{\sqrt{\gamma\varepsilon}}\ln \frac{1}{\varepsilon}\right).\]
\end{remark}
\begin{proof}
See Appendix \ref{proof numb of iterations}.
\end{proof}
\begin{algorithm}[H]
	\caption{Modified ADOM}
	\label{alg:mod-adom}
	\begin{algorithmic}[1]
		\State {\bf input:} $r>0$, for $i = 1,\ldots,m$: $f^{\gamma}_i\colon S \to \R$, $(f_i^{\gamma})^*(z) = \sup\{\langle z,x\rangle - f_i^{\gamma}(x)\mid x\in S\}$ 
  \State define $\nabla h_i^*([\bz]_i) =  \nabla (f_i^{\gamma})^*([\bz]_i) +  r[\bz]_i$ \label{eq: theorem declaration nablah^*}
		\State define $\nabla H^* (\bz) = \left(\nabla h_1^*([\bz]_1),\ldots,\nabla h_m^*([\bz]_m)\right)\transpose$ \label{eq: theorem declaration nablaH^*}
		\State set
		$\alpha = \frac{r}{2}$, $\eta = \frac{2\lminp\sqrt{\gamma}}{7\lmax\sqrt{r(1+r\gamma)}}$, $\theta = \frac{\gamma}{\lmax(1+r\gamma)}$, $\sigma = \frac{1}{\lmax}$, $\tau = \frac{\lminp}{7\lmax}\sqrt{\frac{r\gamma}{1+r\gamma}}$
		\State set $\bz^0 =\mathbf{0}$, $\bz_f^0 = \bz^0$, $\boldm^0 = \mathbf{0}$
		\For{$n = 0,1,2,\ldots$}
		\State $\bz_g^n = \tau \bz^n + (1-\tau)\bz_f^n$
		\State $\Delta^n = \sigma\bW_n(\boldm^n - \eta\nabla H^*(\bz_g^n))$
		\State $\boldm^{n+1} = \boldm^n - \eta\nabla H^*(\bz_g^n) - \Delta^n$
		\State $\bz^{n+1} = \bz^n + \eta\alpha(\bz_g^n - \bz^n) + \Delta^n$
		\State $\bz_f^{n+1} = \bz_g^n - \theta\bW_n\nabla H^*(\bz_g^n)$
		\EndFor
		\State {\bf output: $\bx^{n}_{r,\gamma} =  \nabla H^*(\bz^n_{g})$}
	\end{algorithmic}
\end{algorithm}
\begin{corollary}
\label{cor: convex functions}
Let $S$ be  a convex set in $\R^d$,  let $f_i\colon S\to \R$ be differentiable convex functions for $i=1,\ldots,m$, and let $\bW_n$ be the $n$-th communication matrix satisfying Assumption~\ref{assumption-lambdas} for some $\lminp, \lmax>0$. Decentralized convex optimization problem $\min_{x\in S}\sum_{i=1}^{m} f_i(x)$
over time-varying communication networks  
can be $\varepsilon$ approximated numerically by Algorithm \ref{alg:mod-adom} %
applied 
for $\gamma$ strongly convex regularizing functions\footnote{e.g., one can take $f_i^{\gamma}(x) = f_i(x) + \frac{\gamma}{2}\|x\|^2_2$} $f_i^{\gamma}(x)$ that satisfy  
\begin{equation}
    \label{cor: stron convex}
    0\leq \min\limits_{x\in S} \sum\limits_{i=1}^m f_i(x) - \min\limits_{x\in S}\sum\limits_{i=1}^m f_i^{\gamma}(x)  \leq \varepsilon/2
\end{equation}
if $r<\frac{\varepsilon}{4mK^2}$ where $K$ is such that $\|\nabla f^{\gamma}_i(x)\|_2< K$ for each $i$ and for all $x$ from $\varepsilon/\gamma$-neighborhood of the solution $\argmin\limits_{x\in S}\sum_{i=1}^{m}f^{\gamma}_i(x)$. 
Moreover, if $\gamma = \sqrt{\varepsilon}$, then the rate of the number of iterations is 
 \[n=  \cO\left(\frac{\lmax}{\lminp}\frac{1}{\varepsilon}\ln \frac{1}{\varepsilon}\right).\]
\end{corollary}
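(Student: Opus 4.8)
The plan is to obtain the corollary directly from Theorem~\ref{theorem: strongly convex functions} and Remark~\ref{rem: numb of iterations} by running Algorithm~\ref{alg:mod-adom} on the $\gamma$-strongly convex family $\{f_i^\gamma\}$ and splitting the target accuracy as $\varepsilon=\varepsilon/2$ (gap between the regularized and the original optimal values, governed by \eqref{cor: stron convex}) $+\ \varepsilon/2$ (accuracy of Algorithm~\ref{alg:mod-adom} on the regularized problem $\min_{\bx\in\cS}\sum_i f_i^\gamma([\bx]_i)$). Since the $f_i^\gamma$ are $\gamma$-strongly convex and differentiable and the $\bW_n$ satisfy Assumption~\ref{assumption-lambdas}, Theorem~\ref{theorem: strongly convex functions} applies verbatim and its output $\bx^n_{r,\gamma}=\nabla H^*(\bz_g^n)$ obeys \eqref{eq: theorem: consensus} and \eqref{eq: theorem: value approximation} with $F^\gamma$ in place of the objective.

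Next I would fix the parameters. Because $r<\varepsilon/(4mK^2)$ and $1+r\gamma\ge1$, the non-vanishing term in \eqref{eq: theorem: value approximation} is bounded by $\tfrac{r}{2}mK^2<\varepsilon/8$; and since $\tau=\tfrac{\lminp}{7\lmax}\sqrt{r\gamma/(1+r\gamma)}\in(0,1)$ and $\ln\tfrac1{1-\tau}\ge\tau$, the geometric factors $(1-\tau)^n$ and $(1-\tau)^{n/2}$ in \eqref{eq: theorem: consensus} and \eqref{eq: theorem: value approximation} drop below $\varepsilon/4$ and $3\varepsilon/8$ respectively once $n=\cO\!\big(\tau^{-1}\ln((C_1+C_2)/\varepsilon)\big)$. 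For such $n$ the consensus defect is $\le\varepsilon/4$ and $F^\gamma(\bx^n_{r,\gamma})-\min_{\bx\in\cS}F^\gamma(\bx)\le\varepsilon/2$; combining with the feasibility identity $\min_{\bx\in\cS}F^\gamma(\bx)=\min_{x\in S}\sum_i f_i^\gamma(x)$ and with \eqref{cor: stron convex} yields $\min_{x\in S}\sum_i f_i(x)-\varepsilon/2\le F^\gamma(\bx^n_{r,\gamma})\le\min_{x\in S}\sum_i f_i(x)+\varepsilon$, i.e. an $\varepsilon$-accurate, $(\varepsilon/4)$-consensual solution of the original problem (if one wishes to evaluate $F$ itself at an exactly feasible point, round $\bx^n_{r,\gamma}$ to its nodewise average in $(S)^m$ and control the change via \eqref{eq: theorem: consensus} and the local Lipschitz constant $K$ of the $f_i^\gamma$; since the iteration bound is logarithmic in the accuracy, this only inflates $n$ by a constant factor). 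For the last claim, substituting $\gamma=\sqrt\varepsilon$ and $r=\Theta(\varepsilon/(mK^2))$ into the rate $n=\cO\!\big(\tfrac{\lmax}{\lminp}\sqrt{(1+r\gamma)/(r\gamma)}\ln(C_2/\varepsilon)\big)$ of Remark~\ref{rem: numb of iterations} gives $r\gamma=\Theta(\varepsilon^{3/2})$, hence $\sqrt{(1+r\gamma)/(r\gamma)}=\cO(\varepsilon^{-1})$, while $C_2$ is polynomial in $\varepsilon^{-1}$ (and in the network data, treated as fixed), so $\ln(C_2/\varepsilon)=\cO(\ln(1/\varepsilon))$, producing $n=\cO\!\big(\tfrac{\lmax}{\lminp}\varepsilon^{-1}\ln(1/\varepsilon)\big)$.

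The step that needs genuine care is the accuracy bookkeeping of this two-level reduction: one must assign disjoint fractions of $\varepsilon$ to the residual term of \eqref{eq: theorem: value approximation}, to the two geometric tails of \eqref{eq: theorem: consensus} and \eqref{eq: theorem: value approximation}, to the regularizer gap \eqref{cor: stron convex}, and to the optional rounding correction, and one must use that every common point is feasible for $\cS$ (so that $\min_{\bx\in\cS}F^\gamma(\bx)=\min_{x\in S}\sum_i f_i^\gamma(x)$) to glue Theorem~\ref{theorem: strongly convex functions} to \eqref{cor: stron convex}; one should also check that the constant $K$ (a bound on $\|\nabla f_i^\gamma\|_2$ near the regularized solution) stays bounded as $\varepsilon\to0$, which it does for the footnote choice $f_i^\gamma=f_i+\tfrac{\gamma}{2}\|x\|_2^2$. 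Everything else is substitution of the parameters already fixed in Algorithm~\ref{alg:mod-adom} and Theorem~\ref{theorem: strongly convex functions}.
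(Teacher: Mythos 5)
Your proposal is correct and follows essentially the same route as the paper's own (much terser) proof: apply Theorem~\ref{theorem: strongly convex functions} to the $\gamma$-strongly convex regularized family, use $r<\frac{\varepsilon}{4mK^2}$ together with a sufficient number of iterations to drive the right-hand side of \eqref{eq: theorem: value approximation} below $\varepsilon/2$, and then add the regularization gap \eqref{cor: stron convex} to reach $\varepsilon$-accuracy for the original problem, with the iteration count read off from Remark~\ref{rem: numb of iterations} at $\gamma=\sqrt{\varepsilon}$. Your extra bookkeeping (consensus tail, optional rounding to a feasible point, boundedness of $K$ as $\varepsilon\to 0$) only elaborates on steps the paper leaves implicit.
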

\begin{proof}
The condition $r<\frac{\varepsilon}{4mK^2}$ follows that the right-hand side of the inequality \eqref{eq: theorem: value approximation} is less or equal $\varepsilon/2$, i.e. $$\sum_{i=1}^m f_i^{\gamma}([\bx^n_{r,\gamma}]_i) - \min_{x\in S} \sum_{i=1}^m f_i^{\gamma}(x)\leq \varepsilon/2.$$ Combining it with \eqref{cor: stron convex} we obtain $\varepsilon$ approximation of the convex constrained optimization problem $\min\limits_{x\in S}\sum\limits_{i=1}^{m} f_i(x)$.
\end{proof}

\section{Wasserstein Barycenter Problem}
\label{sec: was}

Wasserstein barycenter problem is a motivating and challenging convex constrained optimization problem  that is convex, but not smooth and not strongly-convex. The problem belongs to the optimal transport theory which %
recently got various application in, e.g., texture mixing \cite{rabin2011wasserstein}, statistical estimation of template models \cite{boissard2015distribution}, graphics and machine learning (for regression, classification and generative modeling) \cite{peyre2019computational}. %
Further we consider WB problem as a decentralized convex optimization problem and propose a computation method in Theorem \ref{th: wasserstein}.

\subsection{Wasserstein distance}

 We provide here only necessary definitions and take into consideration only finite-supported distributions since we deal with numerical experiments. General theory, that begins with Wasserstein distance, can be found in \cite{villani2009optimal}.

Recall that we denote $d$-dimensional simplex by $S_1(d)$ and it represents a set of possible probabilities distributions as $S_1(d) = \{p\in [0,1]^d \mid \sum_{i=1}^d [p]_i = 1\}$. Consider two probability distributions $p, q \in \simplex$ with support on a finite set of points $\{\omega_i \in \mathbb{R}^n \}_{i=1}^d$ such that $p(\omega_i) = p_i$ and $q(\omega_i) = q_i$. %
Then, a cost (loss) matrix $M$ is such that its element $[M]_{ij}\in \mathbb{R}_+$
 represents the cost of moving a unit mass from $\omega_i$ to $\omega_j$. So $M$ is a non-negative symmetric matrix with zeros on the diagonal.  It is often taken as the Euclidean distances matrix, i.e. $[M]_{ij} = \|\omega_i - \omega_j\|_2^2$. %
The set of {\it transport plans} is defined as
		\begin{align*}
		U(p,q)  := \left\lbrace X \in \mathbb{R}_+^{d \times d} \;\big|\; X \boldOne = p, X^T\boldOne = q \right\rbrace,
		\end{align*}
i.e. the set of probabilities measures on $\mathbb{R}_+^{d \times d}$ with margins $p$ and $q$. 
{\it Wasserstein distance} between two probability distributions defines as the following minimum among component-wise multiplication of the cost matrix and transport plans:
\[\cW (p,q)  := \min_{X \in U(p,q)}  \langle M, X \rangle. \]

\subsection{Wasserstein barycenter }

Wasserstein barycenter of a set of probability distributions $q_1,\ldots,q_m$ is a probability distribution itself that is defined as the solution to the following optimization problem %
\begin{equation}
		\label{w_barycenter}
			 \min_{p \in \simplex}\sum\limits_{i=1}^{m} \cW_{q_i}(p),
\end{equation}
where $\cW_{q_i}(p):=\cW(q_i,p)$.  %
In distributed approach, each device (node) possesses its original distribution $q_i$ and the corresponding function $f_i(\cdot) = \cW_{q_i}(\cdot)$. The goal of the whole system is, by communicating with each other, to approximate the barycenter by an iterative algorithm.  At each iteration each node computes a new guess for the barycenter distribution using current guesses from its neighbors. Typically, the first guess coincide with the original distribution $q_i$ and the resulting distributions reach a consensus.  It is known (see e.g. \cite{cuturi2014fast}) that the WB captures the mean structure of given data.
On example of a dataset of hand-written digits `4' of MNIST 784 \cite{lecun1998mnist} Figure \ref{fig: visual digits} shows how local nodes' guesses change and tend to global barycenter, which resembles a digit `4' as well. That visually illustrates our theoretical result, Theorem \ref{th: wasserstein}. %
In this experiment communication networks are \erdos \  random networks and change every 5 iterations.
\begin{figure}[h!t]
\centering
\subfloat[initial data samples]{
\resizebox*{\textwidth}{!}{\includegraphics[width=\textwidth]{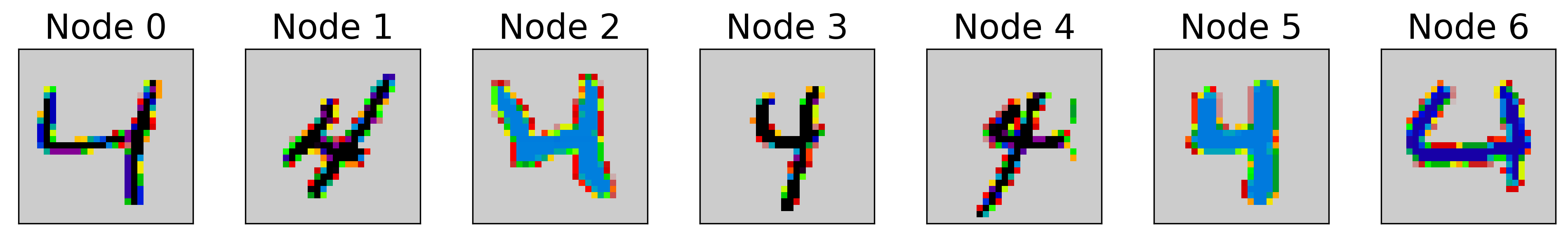}}}
\hfill
\subfloat[iteration 10]{
\resizebox*{\textwidth}{!}{\includegraphics[width=\textwidth]{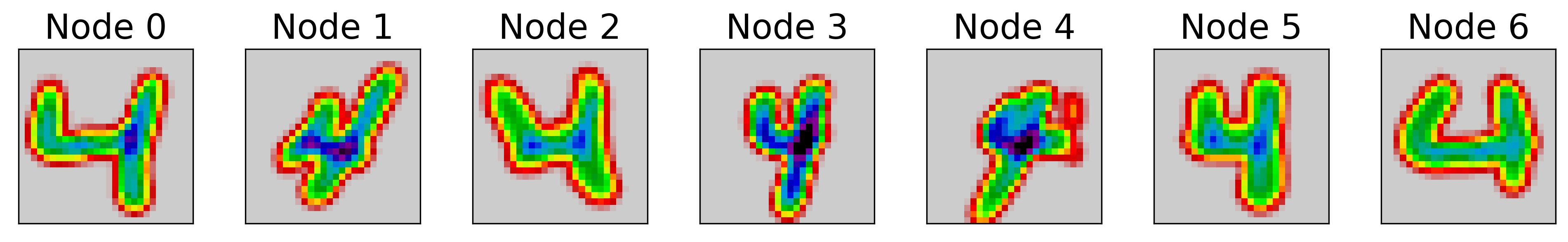}}}
\hfill
\subfloat[iteration 50]{
\resizebox*{\textwidth}{!}{\includegraphics[width=\textwidth]{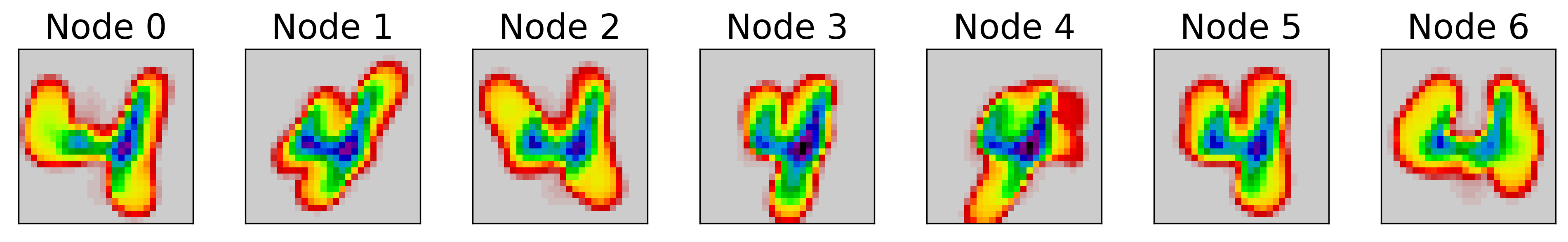}}}
\hfill
\subfloat[iteration 100]{
\resizebox*{\textwidth}{!}{\includegraphics[width=\textwidth]{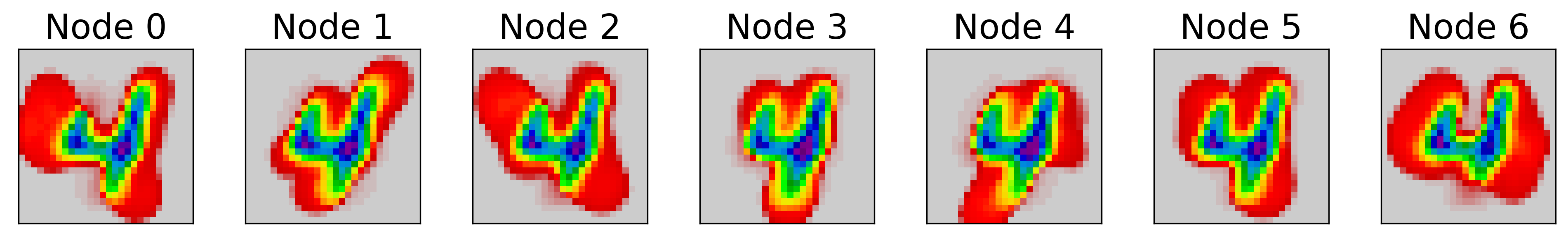}}}
\hfill
\subfloat[iteration 200]{
\resizebox*{\textwidth}{!}{\includegraphics[width=\textwidth]{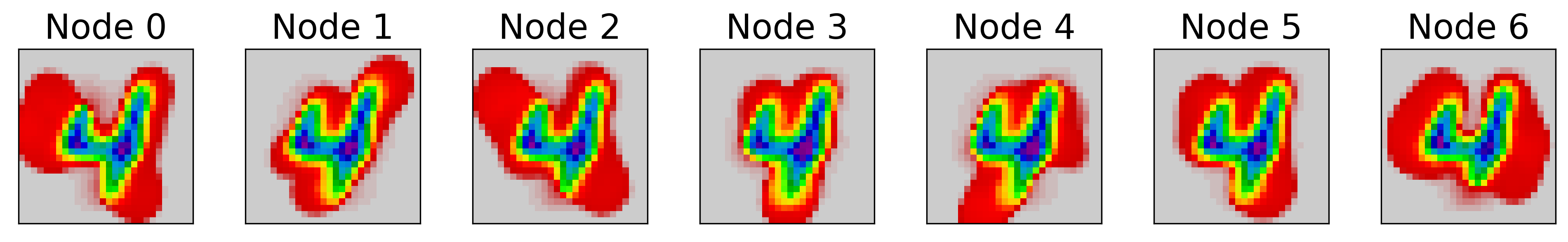}}}
\caption{Evolution of local data converging to Wasserstein barycenter of the hand-written digit 4 of the MNIST784 dataset for a subset of 7 nodes out of 50 over \erdos\ random networks varying each 5 iterations; regularization parameters are $\gamma=0.03, r=0.001$}
\label{fig: visual digits}
\end{figure}

\subsection{Resulting algorithm for WB problem}

To apply the proposed numerical scheme we use entropy regularization of Wasserstein distance $\cW$ and make the following assumption on the initial data. 
\begin{assumption}
\label{assumption}
Let vectors $q_i\in\simplex\subset\R^d$   be such  that \[\min\limits_{i=1,\ldots,m 
\atop l=1,\ldots,d}[q_i]_l = \delta >0.\] 
\end{assumption}
It is not too restrictive as it only excludes zero probabilities of states, which can be done by a little distortion. 
The entropy regularized Wasserstein distance is defined as 
	\begin{align}
		\cW_{\gamma, q} (p)  := \cW_{\gamma} (q,p)=  \min_{X \in U(p,q)} \left\lbrace  \langle M, X \rangle + \gamma \sum_{i=1}^{d} \sum_{j=1}^{d} X_{ij}\ln X_{ij} \right\rbrace,
		\end{align}
where $x\ln x$ is assumed to equal zero if $x=0$.

\begin{theorem}
\label{th: wasserstein}
Let initial distributions $q_i$ satisfy Assumption~\ref{assumption} and let $p^*$ be their Wasserstein barycenter, i.e.  $p^*$ minimizes the problem \eqref{w_barycenter}. 
Let communication matrices $\bW_n$ satisfy Assumption \ref{assumption-lambdas} for some $\lminp, \lmax>0$. If Algorithm \ref{alg:mod-adom} is applied for entropy regularized Wasserstein distance functions $f_i^{\gamma} = \cW_{\gamma, q_i}$, then functions $\nabla h_i^*(z)$ are defined as 
\begin{equation}
    \label{eq: th: wasserstein}
    \begin{array}{cc}
        \nabla h_i^*(z) =  \frac{r}{2}z+ \sum\limits_{j=1}^{m} [q]_j \frac{\exp(\frac{1}{\gamma}([z]_l - M_{lj}))}{\sum\limits_{i=1}^{m}\exp(\frac{1}{\gamma}([z]_i - M_{ij}))},
        \\
        \nabla (H^{r,\gamma})^* (\bz) = (\nabla h_1^*([\bz]_1),\ldots,\nabla h_m^*([\bz]_m))\transpose,
    \end{array}
\end{equation}
and it suffices to take $\gamma = \frac{1}{8}\varepsilon \ln d$, 
 $K^2 =  \sum\limits_{j=1}^{d}\left( 2\gamma\ln d + \inf_i\sup_l |M_{jl} - M_{il}| - \gamma\ln \frac{\delta}{2}\} \right)^2$, and
 $r= \frac{\varepsilon}{4mK^2}$
  to $\varepsilon$ approximate the solution $p^*$ as follows
\begin{gather*}
    \left|\sum\limits_{i=1}^m \cW_{q_i}([\bx^{n}_{r,\gamma}]_i) - \sum\limits_{i=1}^m \cW_{q_i}(p^*)\right|
    \\
    \leq
    2\gamma \ln d + \frac{r}{4(1+r\gamma)}mK^2 + C \left(1- \frac{\lminp}{7\lmax} \sqrt{\frac{r\gamma}{1+r\gamma}}\right)^{n/2}\leq \varepsilon.
\end{gather*}
 Thus, a sufficient number of iterations of Algorithm \ref{alg:mod-adom} is
 \[n= \cO\left(\frac{\lmax}{\lminp}\sqrt{\frac{1+r\gamma}{r\gamma}}\ln \frac{C}{\varepsilon}\right) = \cO\left(\frac{\lmax}{\lminp}\frac{1}{\varepsilon}\ln \frac{1}{\varepsilon}\right).\]
\end{theorem}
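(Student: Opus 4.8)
The plan is to derive Theorem~\ref{th: wasserstein} by specializing Theorem~\ref{theorem: strongly convex functions} to $f_i^{\gamma} = \cW_{\gamma,q_i}$, so the work splits into three parts: (i) identifying the dual oracle $\nabla h_i^*$ explicitly; (ii) bounding the entropic smoothing bias $\bigl|\sum_i \cW_{q_i}(p) - \sum_i \cW_{\gamma,q_i}(p)\bigr|$ by $2\gamma\ln d$, so that the value guarantee of Theorem~\ref{theorem: strongly convex functions} transfers from the regularized objective to the true WB objective; and (iii) estimating the gradient bound $K$ of the regularized functions on the relevant neighborhood, then plugging $\gamma = \tfrac18\varepsilon\ln d$ and $r = \tfrac{\varepsilon}{4mK^2}$ into the rate from Remark~\ref{rem: numb of iterations}.

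For step (i), I would compute the Fenchel conjugate of the entropic OT problem $\cW_{\gamma,q_i}(p) = \min_{X\in U(p,q_i)}\{\langle M,X\rangle + \gamma\sum_{kl}X_{kl}\ln X_{kl}\}$ as a function of $p$, with $q_i$ fixed. Writing the constraint $X\boldOne = p$ and dualizing only that constraint (keeping $X^\top\boldOne = q_i$), one gets by the standard Legendre computation for the negative entropy that $(\cW_{\gamma,q_i})^*(z)$ is a soft-max / log-sum-exp expression, and its gradient is the column-normalized Gibbs kernel $\exp(\tfrac1\gamma([z]_l - M_{l j}))$ weighted by $[q_i]_j$, matching \eqref{eq: th: wasserstein}; adding the $r[\bz]_i$ term from line~\ref{eq: theorem declaration nablah^*} of Algorithm~\ref{alg:mod-adom} gives the stated $\tfrac r2 z$ shift (with $\alpha = r/2$ bookkeeping). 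This is essentially the Sinkhorn dual and should be routine, modulo care that the conjugate is taken over $p\in S_1(d)$ rather than all of $\R^d$.

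For step (ii), the smoothing bias is controlled in the usual way: since $0\le -\sum_{kl}X_{kl}\ln X_{kl}\le \ln(d^2) = 2\ln d$ for any $X$ that is a probability matrix on $d\times d$, adding $\gamma$ times the negative-entropy-type term changes the optimal value by at most $2\gamma\ln d$ (one direction is immediate because the extra term is nonpositive after sign bookkeeping; the other uses optimality of the unregularized plan as a feasible point). Summing over $m$ nodes and comparing at $p = [\bx^n_{r,\gamma}]_i$ versus $p = p^*$ yields the leading $2\gamma\ln d$ term in the displayed bound, which with $\gamma = \tfrac18\varepsilon\ln d$ contributes at most $\tfrac\varepsilon4$; the term $\tfrac{r}{4(1+r\gamma)}mK^2$ is the $\tfrac{r}{2(1+r\gamma)}mK^2$ term of \eqref{eq: theorem: value approximation} after the $\tfrac12$ from combining, bounded by $\tfrac\varepsilon8$ via $r = \tfrac{\varepsilon}{4mK^2}$; and the geometric term is absorbed into $\varepsilon$ for $n$ large, giving the final iteration count exactly as in Remark~\ref{rem: numb of iterations} with $\gamma\varepsilon \sim \varepsilon^2$.

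The main obstacle is step (iii): producing an honest, explicit bound $K$ on $\|\nabla\cW_{\gamma,q_i}(p)\|_2$ valid on the $\varepsilon/\gamma$-neighborhood of the regularized barycenter. By step (i), $\nabla\cW_{\gamma,q_i}(p)$ equals the optimal dual potential $z^*(p)$ (the one entering the Gibbs kernel), so one must bound the entropic dual potentials. This requires Assumption~\ref{assumption} ($[q_i]_l\ge\delta>0$): the lower bound $\delta$ prevents the log-sum-exp normalization from blowing up, and one gets coordinatewise estimates of the form $[z^*]_l \lesssim 2\gamma\ln d + \sup_l|M_{jl}-M_{il}| - \gamma\ln(\delta/2)$, whose squared sum over $l$ is the stated $K^2$. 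The delicate points are (a) translating the $\varepsilon/\gamma$-neighborhood condition into a quantitative perturbation of the potentials — I would argue the potentials are Lipschitz in $p$ in a suitable norm, or alternatively show the bound holds on all of $S_1(d)$ so the neighborhood is irrelevant; and (b) handling the non-uniqueness of dual potentials (they are defined up to an additive constant along the kernel of the consensus direction), which I would pin down by the normalization implicit in the column-stochastic form of \eqref{eq: th: wasserstein}. Once $K$ is in hand, everything else is substitution into Theorem~\ref{theorem: strongly convex functions} and Remark~\ref{rem: numb of iterations}.
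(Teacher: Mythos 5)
Your overall route --- specialize Theorem~\ref{theorem: strongly convex functions} to $f_i^\gamma=\cW_{\gamma,q_i}$, control the entropic bias by $2\gamma\ln d$, and close with an explicit gradient bound $K$ --- is the same as the paper's, and your steps (i) and (ii) match it (the paper simply cites the closed-form dual and gradient from Cuturi--Peyr\'e rather than re-deriving them, and your $2\gamma\ln d$ sandwich is exactly its Remark~\ref{rem: entropy approximation}).

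The genuine gap is in your step (iii). The bound on $\|\nabla\cW_{\gamma,q}(p)\|_2$ (Lemma~\ref{lem-bound_K_rho}, taken from Bigot et al.) is only valid on the set $\{p\in\simplex \mid \min_l [p]_l\ge\rho\}$, and the $-\gamma\ln\rho$ term in $K_\rho$ blows up as $\rho\to 0$; so your fallback option of ``showing the bound holds on all of $S_1(d)$ so the neighborhood is irrelevant'' cannot work --- the dual potentials are genuinely unbounded near the boundary of the simplex. What is actually needed, and what you do not supply, is a proof that the \emph{regularized barycenter} $p^*_\gamma$ itself is coordinatewise bounded away from zero. Assumption~\ref{assumption} only gives $[q_i]_l\ge\delta$ for the input marginals; it says nothing directly about $p^*_\gamma$, which is the point at which (a neighborhood of which) the gradient must be bounded. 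The paper closes this with Lemma~\ref{lem-p*_is_bounded_away_from_zero}: if some component of $p^*_\gamma$ were below $\min\{\tfrac1e,\min_{i,l}[q_i]_l\}$, one perturbs the optimal transport plans by $\pm\tfrac12\delta$ in four entries (feasible because entropic plans have strictly positive entries) and uses monotonicity of $x\ln x$ on $(0,\tfrac1e)$ together with the zero diagonal of $M$ to strictly decrease every $\cW_{\gamma,q_i}$, contradicting optimality. This lower bound, minus the radius $\zeta$ of the neighborhood, is what justifies taking $\rho\approx\delta/2$ in $K^2$. Your remark that ``the lower bound $\delta$ prevents the log-sum-exp normalization from blowing up'' conflates the role of the $q_i$ (which enter the dual as weights) with the role of the argument $p$ (whose small components are what make the potentials diverge), so as written the proposal does not establish the stated value of $K$.
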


\section{Numerical Experiments}
\label{app: experiments}

We provide numerical experiments to demonstrate performance of the proposed method. %
We can fruitfully test our method on WB problem with an artificial set of univariate, discrete and truncated Gaussian distributions. For such a dataset, the resulting distribution (the zero-entropy Wasserstein barycenter) is described by an analytic formula.
Namely,  the barycenter is a Gaussian distribution which mean is the arithmetic average of the means of the given Gaussians and the standard deviation of the barycenter is the arithmetic average of the standard deviations of the given Gaussians.

For all figures of this section we generated a dataset of truncated Gaussians. Each distribution size is $100$, while a size of a dataset (i.e. the number of nodes) differs and is indicated at each figure. For entropy regularized Wasserstein distance we use normalized Euclidean cost matrix and entropy regularization parameter $\gamma = 0.01$. The regularization parameter $r$ of the method is $r = 0.001$.

\subsection{Comparison with other methods}

To the best of our knowledge, we can compare our method (called here ADOM) with local barycenters method (LB) proposed in  \cite{bishop2021network} and Fenchel dual gradient method (FDGM) proposed in \cite{wu2019fenchel}.  They all are applicable for the WB problem on time-varying networks. %
Regardless of the analytical form of the LB algorithm, in order to implement it we need either to solve an optimization problem at each iteration or to use approximations, e.g. methods of \cite{flamary2021pot}. Note that, for any particular setup, realization of FDGM is quite a problem, since the method is sensitive to the step size $\alpha_n$. ADOM negotiates limitations described above and reveals relatively stable convergence as it is presented at Figures~2--3%
, where we test the methods on cycle networks that change every iteration.   %

\begin{figure}[H]
\label{fig: comparison_opt}
\centering
\includegraphics[width=\textwidth]{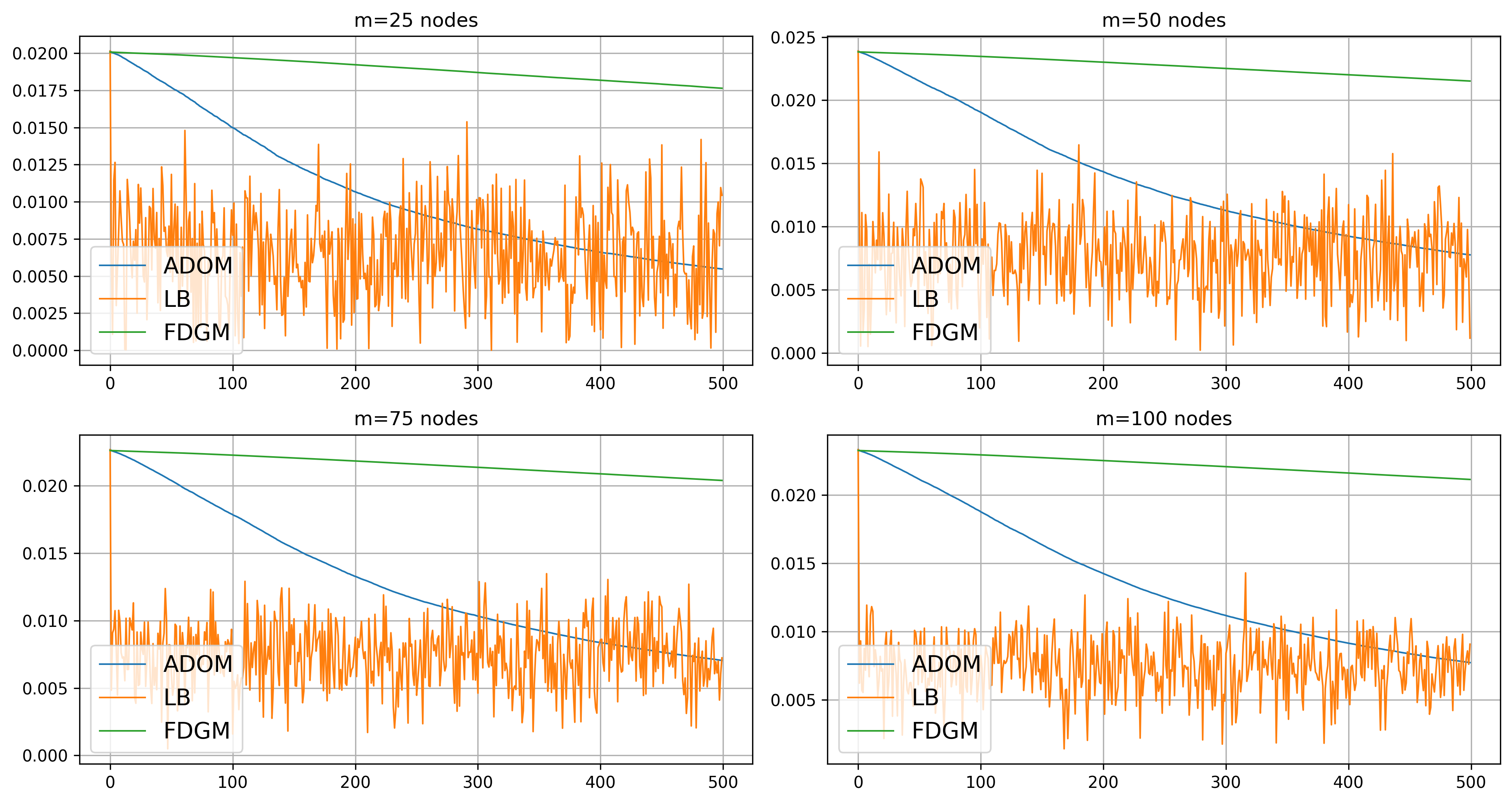}
\caption{$\frac{1}{m}\left(\sum\limits_{i=1}^m\cW(q_i, [\bx^n\rgam]_i) - \sum\limits_{i=1}^m\cW(q_i, p^*)\right)$-convergence comparison on cycle networks changing every iteration}
\end{figure}

\begin{figure}[H]
\centering
\includegraphics[width=\textwidth]{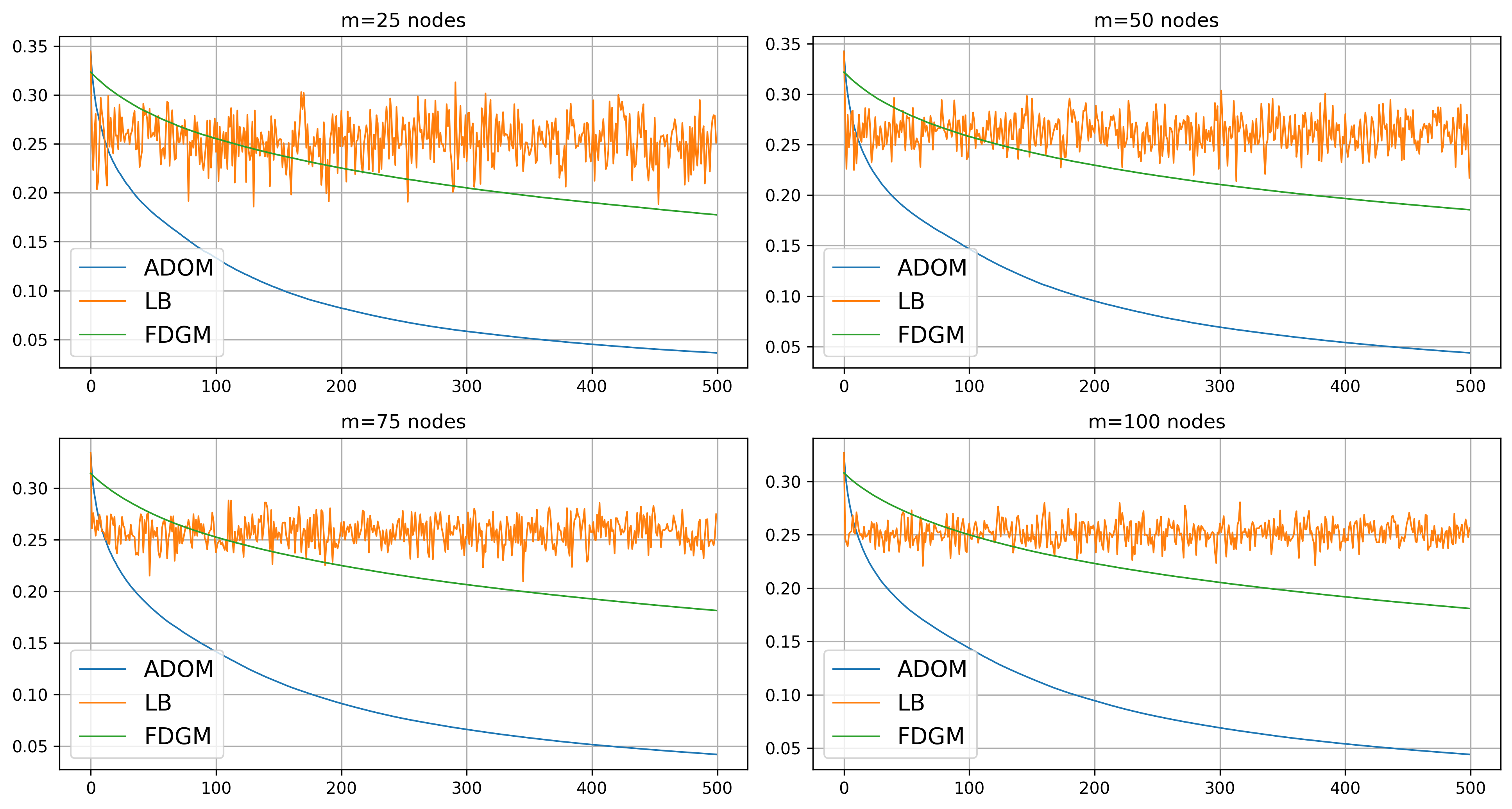}
\label{fig: comparison_con}
\caption{Consensus condition comparison of methods on cycle networks changing every iteration}
\end{figure}

\subsection{Performance rate}

We can see the difference in error value, i.e. in $\frac{1}{m}\left(\sum\limits_{i=1}^m\cW(q_i, [\bx^n\rgam]_i) - \sum\limits_{i=1}^m\cW(q_i, p^*)\right)$, in two opposite cases: Figure~\ref{fig: dif_topologies_constant} presents evolution of errors computed for constant networks of different topologies, while at Figure~\ref{fig: dif_topologies_varying} networks change every iterations within indicated topology; the exception is complete network that cannot change. One natural way to sample a random network is to independently sample each edge with a probability~$p$. Such networks are called \erdos\ network or $(m,p)$-\erdos\ network, where $m$ is the number of nodes and $p$ is the probability of an edge. Let us notice also that star, cycle and minimum spanning tree (of $(m, 0.9)$-\erdos) networks have $m-1, m,$ and $m+1$ edges respectively in contrast to the complete network with $m(m-1)/2$ edges and $(m, 0.5)$-\erdos\ network that has $m(m-1)/4$ edges in average. 

\begin{figure}[H]
\centering
\subfloat[Constant networks]{
\resizebox*{0.46\textwidth}{!}{\includegraphics[width=\textwidth]{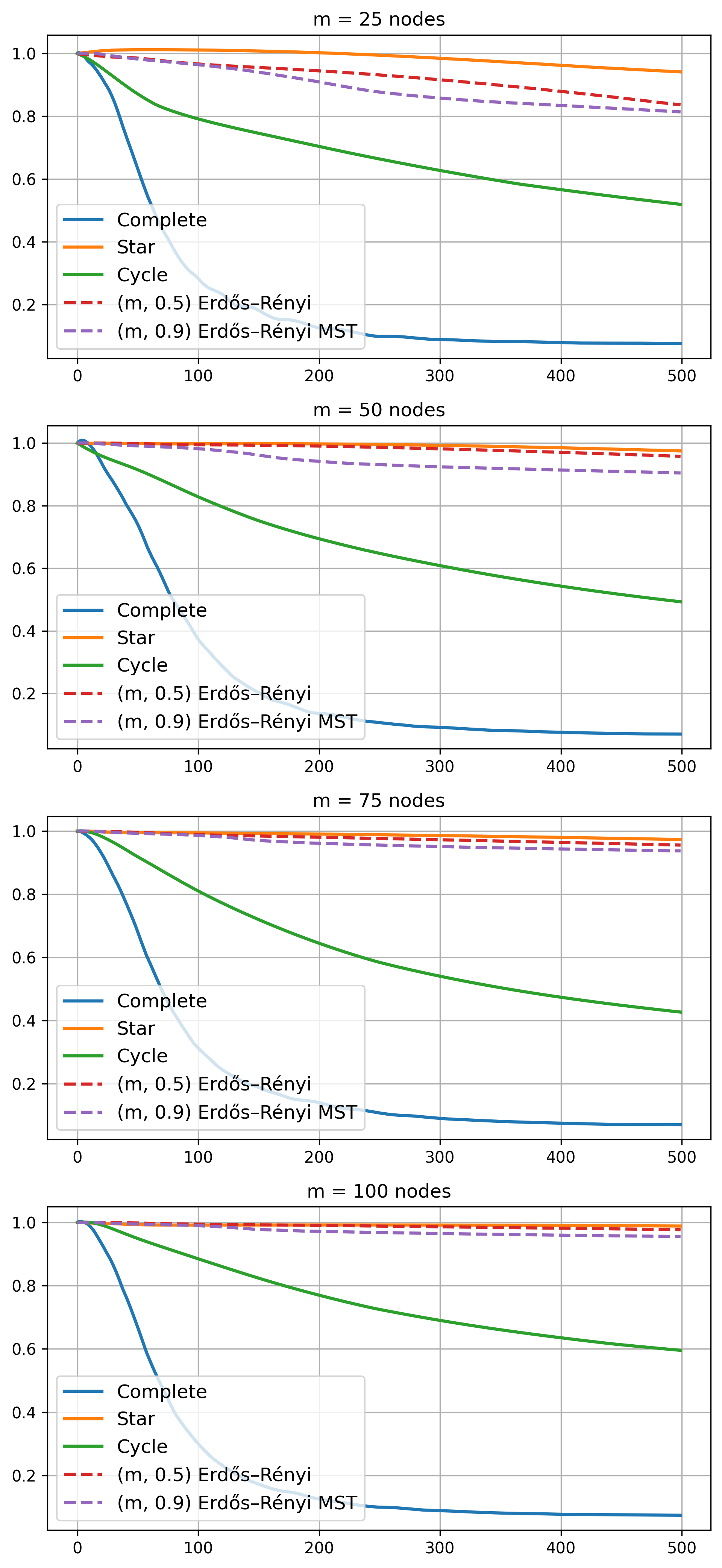}}
\label{fig: dif_topologies_constant}
}
\hfill
\subfloat[Time-varying networks]{
\resizebox*{0.46\textwidth}{!}{\includegraphics[width=\textwidth]{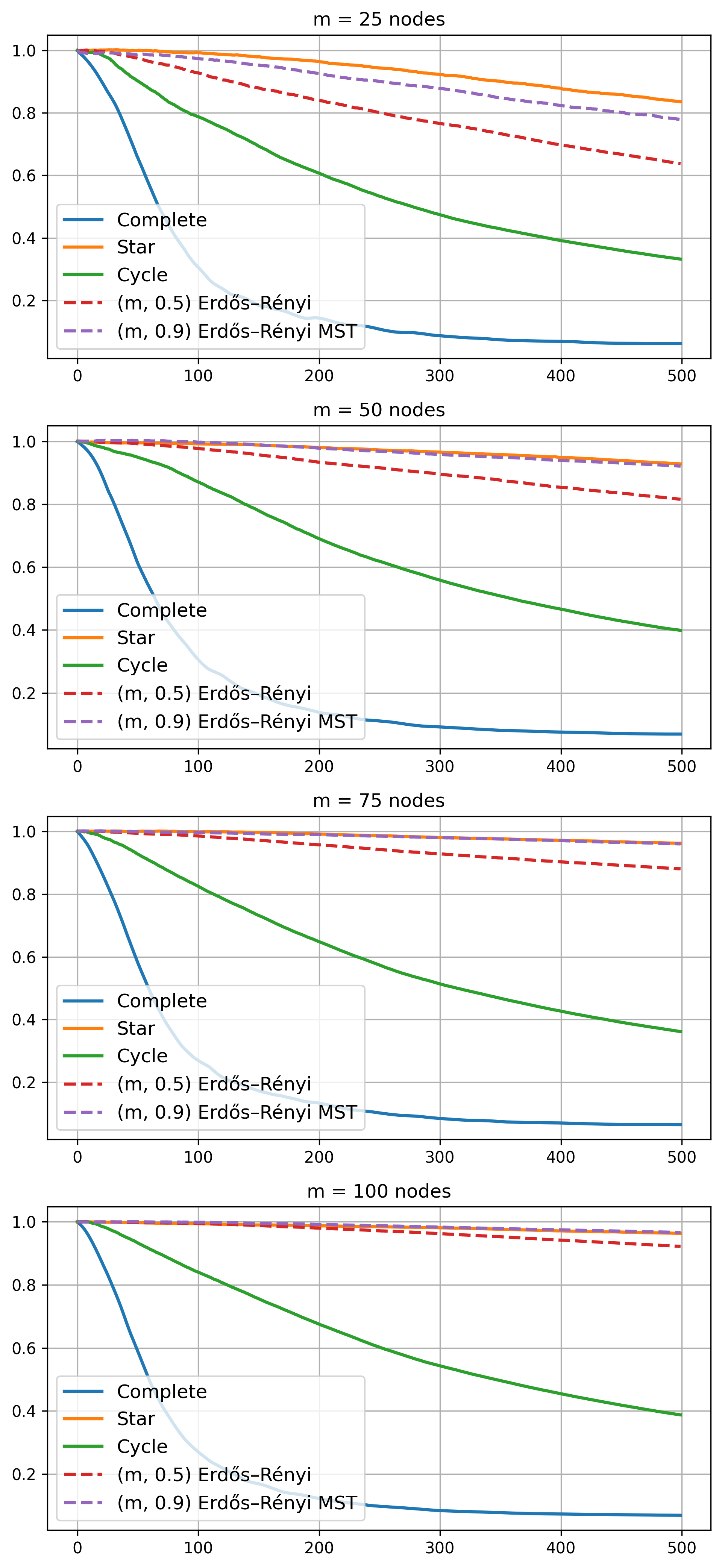}
\label{fig: dif_topologies_varying}}}
\label{fig: dif_topologies}
\caption{Different network topologies: error over time}
\end{figure}

For the two `most efficient' topologies that prove themselves at Figures~\ref{fig: dif_topologies_constant}--\ref{fig: dif_topologies_varying} we compute at Figure~\ref{fig: freq} the error evolution for different frequency of the networks varying. We indicate the lengths of epoch, i.e. number of iteration between network changing. Notice that the evolution for constant networks, computed above, matches to infinite  epoch length. The number of iterations remain the same on all figures despite it is insufficient for convergence on cycle networks. Nonetheless one can see the trends of convergence and notice that there is no monotonicity with respect to frequency of networks varying.

\begin{figure}[H]
\centering
\subfloat[Cycle  networks]{
\resizebox*{0.98\textwidth}{!}{\includegraphics[width=\textwidth]{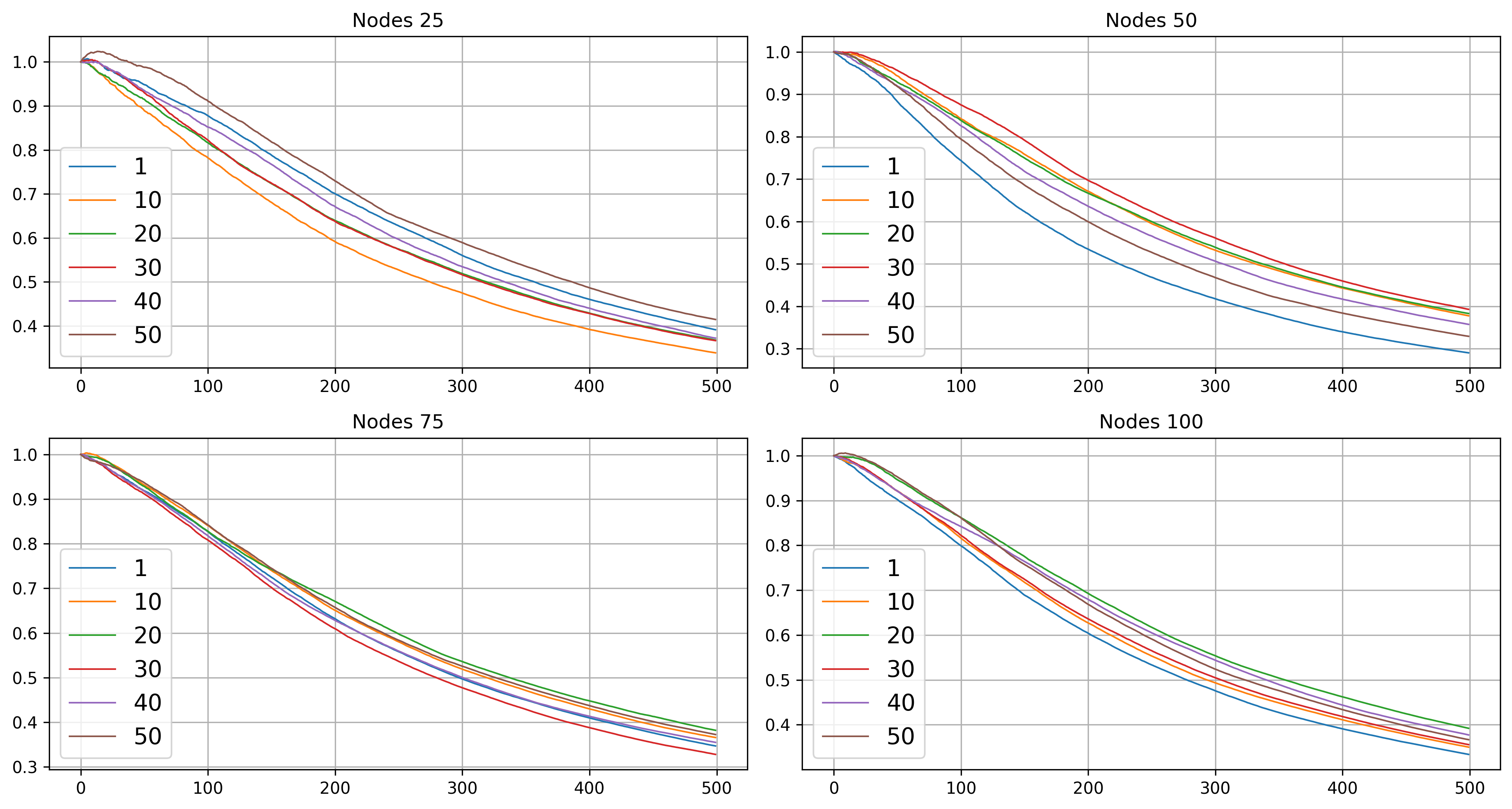}}
}%
\vfill
\subfloat[$(m, 0.9)$-\erdos\  networks]{
\resizebox*{0.98\textwidth}{!}{\includegraphics[width=\textwidth]{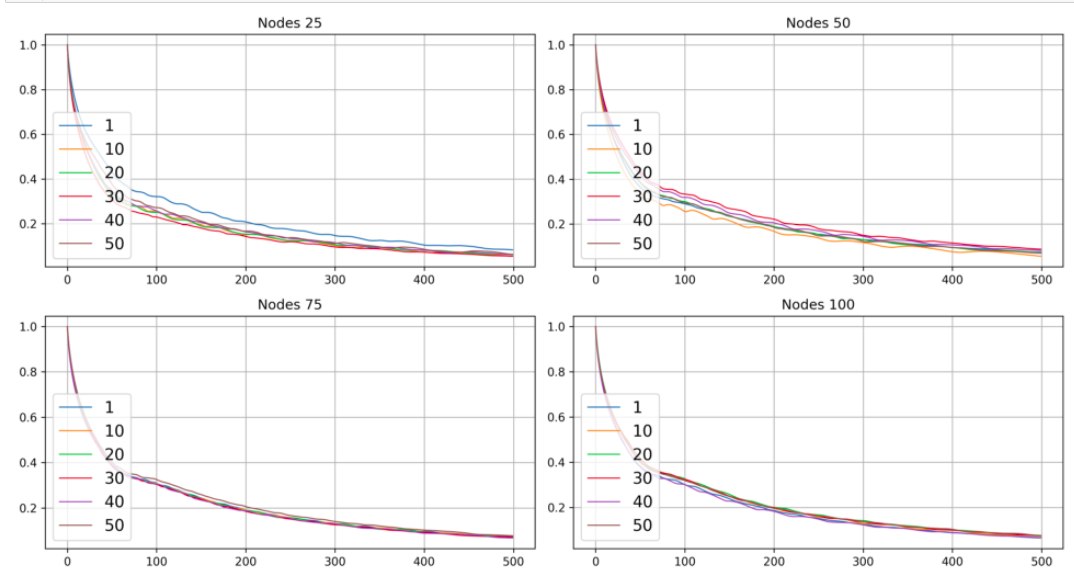}}
}
\caption{Different number of iteration between network changing:  errors over time}
\label{fig: freq}
\end{figure}

\section*{Acknowledgments} 
The authors are grateful to Alexander Rogozin.

The work of A. Gasnikov in Section 4 of the paper was funded by Russian Science Foundation (project 18-71-10108).  

The work of O. Yufereva in the rest part of the paper was performed as part of research conducted
in the Ural Mathematical Center with the financial support
of the Ministry of Science and Higher Education of the Russian
Federation
(Agreement number 075-02-2023-913).

\bibliography{sn-bibliography}

\appendix

\section{ADOM and its assumptions}

The state of the art numerical computation method for time-varying networks, called ADOM, is developed in \cite{kovalev2021adom} and this subsection is to present its main objects. It has natural restrictions on the class of suitable problems and, e.g., Wasserstein barycenter problem lies beyond the requirements of this algorithm. %
So we modify ADOM to solve more general optimization problems with restrictions.
For the sake of consistency, we slightly change original notation and adduce below the results from \cite{kovalev2021adom}.

In \cite{kovalev2021adom}, optimization problem with the consensus condition is
\begin{gather}
 \min\limits_{\mathbf{x}\in \cR} H(\mathbf{x}) =  \min\limits_{\mathbf{x}\in \cR} \sum\limits_{i=1}^{m} h_{i}([\bx]_i), \label{eq: opt prob for ADOM setup with bx^*}\\
\text{where } \cR = \left\{\bx=([\bx]_1,\ldots,[\bx]_m)\in (\R^d)^m \mid [\bx]_1 = \ldots=[\bx]_m\right\} \notag, 
\end{gather}
where functions $h_i\colon \R^d \to \R$ are assumed to be smooth and strongly convex.  Problem~\eqref{eq: opt prob for ADOM setup with bx^*} is equivalent to the following:
\begin{gather}
    \min\limits_{\mathbf{z}\in \cRperp}H^*(\mathbf{z}) \label{eq-prob-H*}, \\
    \text{where }
    \cRperp = \left\{\bz=([\bz]_1,\ldots,[\bz]_m)\in (\R^d)^m \;\Big|\; \sum\limits_{i=1}^m [\bz]_i = 0\right\}, \notag
\end{gather}
where $H^*$ is the Fenchel transform  of the function $H$ and $\cRperp$ is the orthogonal complement of $\cR$, that exists since $S = \R^d$ here.

\begin{theorem}[{\cite[Theorem 1]{kovalev2021adom}}] \label{thm:adom}
    Let  functions $h_i\colon \R^d \to \R$ be $L$ smooth and $\mu$ strongly convex, $\bx^*$ be the solution of the optimization problem \eqref{eq: opt prob for ADOM setup with bx^*}, $\bW_n$ be a communication matrix at the $n$-th iteration satisfying Assumption \ref{assumption-lambdas}.
	Set parameters $\alpha, \eta, \theta, \sigma,\tau$ of Algorithm~\ref{alg:adom} to $\alpha = \frac{1}{2L}$, $\eta = \frac{2\lminp\sqrt{\mu L}}{7\lmax}$, $\theta = \frac{\mu}{\lmax}$, $\sigma = \frac{1}{\lmax}$, and $\tau = \frac{\lminp}{7\lmax}\sqrt{\frac{\mu}{L}}$. Then there exists $C>0$, such that for Fenchel conjugate function $H^*(\bz)$ from~\eqref{eq-prob-H*}
	\begin{equation}
		\left\|\nabla H^*(\bz_g^n) - \bx^*\right\|^2_2 \leq C \left(1- \frac{\lminp}{7\lmax} \sqrt{\frac{\mu}{L}}\right)^n.
	\end{equation}
\end{theorem}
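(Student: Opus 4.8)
The statement is the convergence guarantee for the original ADOM method, and the plan is to establish it through a Lyapunov (potential-function) analysis of the coupled sequences $(\bz^n,\bz_f^n,\boldm^n)$ produced by Algorithm~\ref{alg:adom}. The first step is to record the dual regularity and reduce the target quantity. Since each $h_i$ is $L$-smooth and $\mu$-strongly convex, the conjugate $H^*$ is $\frac{1}{L}$-strongly convex and $\frac{1}{\mu}$-smooth on $(\R^d)^m$. Let $\bz^* \eqdef \argmin_{\bz\in\cRperp} H^*(\bz)$; the optimality condition $\nabla H^*(\bz^*)\perp\cRperp$ gives $\nabla H^*(\bz^*)=\bx^*\in\cR$. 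Reading off the updates one checks that $\bz^n,\bz_f^n,\bz_g^n$ stay in $\cRperp$ (they start there, $\range\bW_n=\cRperp$, and the gossip steps $\Delta^n=\sigma\bW_n(\cdots)$ and $\bz_f^{n+1}=\bz_g^n-\theta\bW_n\nabla H^*(\bz_g^n)$ preserve $\cRperp$), so $\bz_g^n-\bz^*\in\cRperp$ and hence $\abr{\nabla H^*(\bz^*),\bz_g^n-\bz^*}=\abr{\bx^*,\bz_g^n-\bz^*}=0$. The smoothness inequality for $H^*$ then yields
\[
\frac{\mu}{2}\nbr{\nabla H^*(\bz_g^n)-\bx^*}^2 \le H^*(\bz_g^n)-H^*(\bz^*),
\]
so it suffices to show geometric decay of the dual suboptimality, which I control by the Lyapunov function.

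Next I would construct the potential. Following the accelerated-gossip template, I define a function of the form
\[
\Psi^n = \frac{1}{\eta\alpha}\nbr{\bz^n-\bz^*}^2 + \frac{1}{\sigma}\nbr{\boldm^n-\boldm^*}^2 + a\br{H^*(\bz_f^n)-H^*(\bz^*)},
\]
with a fixed point $\boldm^*$ for the error-feedback variable and a constant $a>0$ to be calibrated. The three terms respectively track the Nesterov momentum variable $\bz^n$, the gossip error-feedback variable $\boldm^n$ that compensates for the inexact and time-varying consensus projection, and the suboptimality of the averaged sequence $\bz_f^n$.

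The core is the one-step contraction. Substituting $\bz_g^n=\tau\bz^n+(1-\tau)\bz_f^n$, $\bz^{n+1}=\bz^n+\eta\alpha(\bz_g^n-\bz^n)+\Delta^n$, $\boldm^{n+1}=\boldm^n-\eta\nabla H^*(\bz_g^n)-\Delta^n$, and $\bz_f^{n+1}=\bz_g^n-\theta\bW_n\nabla H^*(\bz_g^n)$ into $\Psi^{n+1}$, I combine (i) the three-point inequality from $\frac{1}{L}$-strong convexity of $H^*$ evaluated at $\bz_g^n$; (ii) the $\frac{1}{\mu}$-smoothness bound on $H^*(\bz_f^{n+1})-H^*(\bz_g^n)$ along the gossip step, where $\lminp\le\bW_n|_{\cRperp}\le\lmax$ from Assumption~\ref{assumption-lambdas} supplies the spectral factors; and (iii) the telescoping identity relating $\Delta^n=\sigma\bW_n(\boldm^n-\eta\nabla H^*(\bz_g^n))$ to the error-feedback term. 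With the prescribed $\alpha=\frac{1}{2L}$, $\eta=\frac{2\lminp\sqrt{\mu L}}{7\lmax}$, $\theta=\frac{\mu}{\lmax}$, $\sigma=\frac{1}{\lmax}$, and $\tau=\frac{\lminp}{7\lmax}\sqrt{\mu/L}$, all indefinite cross terms cancel and one obtains $\Psi^{n+1}\le(1-\tau)\Psi^n$. Unrolling gives $\Psi^n\le(1-\tau)^n\Psi^0$; since $\bz_g^n$ is a convex combination of $\bz^n$ and $\bz_f^n$, the suboptimality $H^*(\bz_g^n)-H^*(\bz^*)$ is dominated by $\Psi^n$ up to a constant, and combining with the reduction above yields $\nbr{\nabla H^*(\bz_g^n)-\bx^*}^2\le C(1-\tau)^n$ with $C$ proportional to $\Psi^0/\mu$.

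The main obstacle is the one-step contraction: because $\bW_n$ changes every iteration, the consensus projection onto $\cRperp$ is both inexact and non-stationary, so the error-feedback variable $\boldm^n$ must be tracked carefully to guarantee that the accumulated consensus residual does not destroy the acceleration. The delicate point is selecting the multiplier $a$ and the fixed point $\boldm^*$ so that every cross term (momentum against gossip-error, gradient against consensus-residual) is absorbed into the single contraction factor $1-\tau$; the specific constant $7$ and the $\sqrt{\mu/L}$ scaling emerge precisely from this balancing, exactly as carried out in \cite{kovalev2021adom}.
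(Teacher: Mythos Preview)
The paper does not actually prove this theorem: it is quoted verbatim from \cite[Theorem~1]{kovalev2021adom} and only \emph{used} (in Corollary~\ref{cor: to apply adom to H} and Lemma~\ref{lem: c2 bound}) as a black box, together with Remark~\ref{rem: adom exact C} extracting the explicit constant $C=\frac{1}{2\mu^2}$. So there is no in-paper proof to compare against.

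That said, your outline is an accurate reconstruction of the argument in \cite{kovalev2021adom}: a coupled Lyapunov function mixing $\nbr{\bz^n-\bz^*}^2$, an error-feedback term in $\boldm^n$, and the dual suboptimality at $\bz_f^n$, shown to contract by the factor $1-\tau$ per step. One small deviation worth noting: the paper's own references to the proof (Remark~\ref{rem: adom exact C} and the end of the proof of Lemma~\ref{lem: c2 bound}) indicate that the reduction in \cite{kovalev2021adom} passes through the chain
\[
\nbr{\nabla H^*(\bz_g^n)-\bx^*}^2 \le \tfrac{1}{\mu^2}\nbr{\bz_g^n-\bz^*}^2 \le \tfrac{1}{2\mu^2}\br{1-\tau}^n,
\]
i.e.\ via the $\tfrac{1}{\mu}$-Lipschitzness of $\nabla H^*$ applied directly to the iterate distance, rather than through the dual suboptimality bound $\tfrac{\mu}{2}\nbr{\nabla H^*(\bz_g^n)-\bx^*}^2\le H^*(\bz_g^n)-H^*(\bz^*)$ that you derive. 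Your reduction is correct (your orthogonality argument $\abr{\bx^*,\bz_g^n-\bz^*}=0$ plus co-coercivity is valid), but it yields a different constant; if you want to match the paper's $C=\tfrac{1}{2\mu^2}$ exactly, you should switch to the Lipschitz-gradient route and show the Lyapunov function dominates $\nbr{\bz_g^n-\bz^*}^2$ rather than $H^*(\bz_g^n)-H^*(\bz^*)$.
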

 \begin{remark}
 \label{rem: adom exact C}
 Addressing details of the proof of Theorem~1 of {\cite{kovalev2021adom}} we see that  there is a particular choice of the constant $C$, namely
 \begin{equation}
 \label{eq: rate C from adom}
     C = \max \left\{\frac{2\tau}{\mu^2}, \frac{\tau(1-\tau)L}{\eta(1-\eta \alpha)\mu^2} \right\} = \frac{1}{\mu^2}\max \left\{\frac{2\lminp\sqrt{\mu}}{7\lmax \sqrt{L}}, \frac{1}{2} \right\} = \frac{1}{2\mu^2}.
 \end{equation}
 It means that the actual convergence rate is $n = \cO \left(\frac{\lmax}{\lminp}\sqrt{\frac{L}{\mu}}\ln \frac{1}{\mu^2\varepsilon}\right)$.
 \end{remark}
\begin{algorithm}[H]
	\caption{ADOM: Accelerated Decentralized Optimization Method}
	\label{alg:adom}
	\begin{algorithmic}[1]
		\State {\bf input:} $\nabla H^*\colon (\R^d)^m\to \R$, $\bz^0 \in \cR^\perp, \boldm^0 \in (\R^d)^\cV, \alpha, \eta, \theta,\sigma\!>\!0, \tau\!\in\!(0,1)$
		\State set $\bz_f^0 = \bz^0$
		\For{$k = 0,1,2,\ldots$}
		\State $\bz_g^n = \tau \bz^n + (1-\tau)\bz_f^n$\label{dual:line:z1}
		\State $\Delta^n = \sigma\bW_n(\boldm^n - \eta\nabla H^*(\bz_g^n))$\label{dual:line:delta}
		\State $\boldm^{n+1} = \boldm^n - \eta\nabla H^*(\bz_g^n) - \Delta^n$\label{dual:line:m}
		\State $\bz^{n+1} = \bz^n + \eta\alpha(\bz_g^n - \bz^n) + \Delta^n$\label{dual:line:z2}
		\State $\bz_f^{n+1} = \bz_g^n - \theta\bW_n\nabla H^*(\bz_g^n)$\label{dual:line:z3}
		\EndFor
	\end{algorithmic}
\end{algorithm}

\section{Proof of Theorem \ref{theorem: strongly convex functions}}
\label{proof main theorem}
 
 All the arguments below are applied under assumptions of Theorem~\ref{theorem: strongly convex functions}, i.e. we assume that $S\subset \R^d$ is a convex set, $\bx\in \cS $ is equivalent to $[\bx]_i\in S$ for all $i=1,\ldots,m$, functions $f^{\gamma}_i\colon S\to \R$ are $\gamma$ strongly convex, and the output of Algorithm~\ref{alg:mod-adom} is $\bx^n_{r,\gamma} = \nabla (H^{r,\gamma})^*(\bz_g^n)$. Denote also
 \begin{align*}
     \bx_{\gamma}^* = (x^*_{\gamma},\ldots,x^*_{\gamma}) = \argmin\limits_{\bx\in \cS} F^{\gamma}(\bx) = \argmin\limits_{x\in S} \sum\limits_{i=1}^mf_i^{\gamma}(x).
 \end{align*}

\subsection{Derivation of $(H^{r,\gamma})^*$}
\label{subsect: derivation H*}

In brief, in this subsection we show that functions  $h_i^{r,\gamma}$ from \eqref{eq: proof: decl H} are $\frac{1}{r}$ smooth, $\frac{\gamma}{1+r\gamma}$ strongly convex, and such that $\nabla (H^{r,\gamma})^*$ from Line~\ref{eq: theorem declaration nablaH^*} of Algorithm \ref{alg:mod-adom} is the gradient of the conjugate function $(H^{r,\gamma})^*$ of $H^{r,\gamma} = \sum\limits_{i=1}^m h_i^{r,\gamma}$ from \eqref{eq: proof: decl H}. Then the consensus condition~\eqref{eq: theorem: consensus} becomes a corollary of Theorem \ref{thm:adom} with $L = \frac{1}{r}$ and $\mu = \frac{\gamma}{1+r\gamma}$.

From now on let functions $h_i^{r,\gamma}\colon \R^d \to \R$ and $H^{r,\gamma}\colon (\R^d)^m \to \R$ be
 \begin{equation}
 \label{eq: proof: decl H}
        \begin{array}{cc}
             H^{r,\gamma}(\bx) = \sum\limits_{i=1}^{m} h_i^{r,\gamma}([\bx]_i), \ \mbox{where}
             \\
             h_i^{r,\gamma}(x) = \inf\limits_{y\in S}\left\{f_i^{\gamma}(y) + \frac{1}{2r}\|y-x\|^2_2\right\}.
        \end{array}
    \end{equation}
Define their conjugate as $(h_i^{r,\gamma})^*$ and $(H^{r,\gamma})^*$.
\begin{lemma}
 If functions $h_i^{r,\gamma}$ and $H^{r,\gamma}$ are defined by \eqref{eq: proof: decl H}, then their Fenchel conjugate functions $(h_i^{r,\gamma})^*$ and $(H^{r,\gamma})^*\colon (\R^d)^m \to \R$ are
 \begin{equation*}
 \label{eq: proof: decl H*}
        \begin{array}{cc}
             (H^{r,\gamma})^*(\bz) = \sum\limits_{i=1}^{m} (h_i^{r,\gamma})^*([\bz]_i), \ \mbox{where}
             \\
             (h_i^{r,\gamma})^*(z) = (f_i^{\gamma})^*(z) + \frac{r}{2}\|z\|^2_2.
        \end{array}
    \end{equation*}
Moreover, its conjugate $(H^{r,\gamma})^{**}$ coincides with $H^{r,\gamma}$.
 \end{lemma}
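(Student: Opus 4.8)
The plan is to recognize the function $h_i^{r,\gamma}$ in \eqref{eq: proof: decl H} as the infimal convolution of $f_i^\gamma$ (extended to $+\infty$ outside $S$) with the quadratic $\frac{1}{2r}\|\cdot\|_2^2$, and then to invoke the standard fact that the Fenchel conjugate of an infimal convolution is the sum of the conjugates. Concretely, writing $\tilde f_i^\gamma(y) = f_i^\gamma(y)$ for $y\in S$ and $+\infty$ otherwise, we have $h_i^{r,\gamma} = \tilde f_i^\gamma \,\square\, q_r$ where $q_r(x) = \frac{1}{2r}\|x\|_2^2$. Since $q_r^*(z) = \frac{r}{2}\|z\|_2^2$ and $(\tilde f_i^\gamma)^*(z) = \sup_{x\in S}\{\langle z,x\rangle - f_i^\gamma(x)\} = (f_i^\gamma)^*(z)$ by the definition of $(f_i^\gamma)^*$ given in Algorithm~\ref{alg:mod-adom}, the identity $(\tilde f_i^\gamma \,\square\, q_r)^* = (\tilde f_i^\gamma)^* + q_r^*$ gives exactly $(h_i^{r,\gamma})^*(z) = (f_i^\gamma)^*(z) + \frac{r}{2}\|z\|_2^2$. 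The separable structure of $H^{r,\gamma}$ over the blocks $[\bx]_i$ then immediately yields $(H^{r,\gamma})^*(\bz) = \sum_{i=1}^m (h_i^{r,\gamma})^*([\bz]_i)$, since conjugation of a separable sum is the sum of the conjugates.

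First I would verify the hypotheses needed for the infimal-convolution conjugation formula to hold with equality (as opposed to merely $\geq$): $\tilde f_i^\gamma$ is proper, closed (lower semicontinuous), and convex — closedness/properness because $S$ is convex and $f_i^\gamma$ is continuous on it (being finite convex, indeed differentiable), and one can take $S$ closed or pass to its closure without changing the infimum in a neighborhood of the relevant points; $q_r$ is finite, convex, coercive, and continuous everywhere. Under these conditions the infimal convolution is itself proper convex, the infimum defining $h_i^{r,\gamma}(x)$ is attained for every $x$ (by coercivity of $q_r$ plus lower semicontinuity), and $(f\,\square\,g)^* = f^* + g^*$ holds as an honest identity. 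I would state this as a short lemma-internal computation:
\begin{equation*}
(h_i^{r,\gamma})^*(z) = \sup_{x}\Big\{\langle z,x\rangle - \inf_{y\in S}\big(f_i^\gamma(y) + \tfrac{1}{2r}\|y-x\|_2^2\big)\Big\} = \sup_{y\in S}\sup_{x}\Big\{\langle z,x\rangle - f_i^\gamma(y) - \tfrac{1}{2r}\|y-x\|_2^2\Big\},
\end{equation*}
then carry out the inner supremum over $x$ explicitly (maximizer $x = y + rz$, value $\langle z,y\rangle + \frac{r}{2}\|z\|_2^2$), leaving $\sup_{y\in S}\{\langle z,y\rangle - f_i^\gamma(y)\} + \frac{r}{2}\|z\|_2^2 = (f_i^\gamma)^*(z) + \frac{r}{2}\|z\|_2^2$. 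This direct swap-of-suprema computation is cleaner than citing an abstract theorem and sidesteps most regularity concerns.

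For the final claim that $(H^{r,\gamma})^{**} = H^{r,\gamma}$, the plan is to invoke the Fenchel–Moreau theorem: it suffices that $H^{r,\gamma}$ be proper, convex, and lower semicontinuous. Properness and convexity are clear from the infimal-convolution representation; lower semicontinuity is the one point requiring a small argument, and I expect this to be the main (mild) obstacle — the infimal convolution of a closed convex function with a coercive continuous one is again closed, but one should note this explicitly, or alternatively observe that $h_i^{r,\gamma}$ is in fact the Moreau envelope of $\tilde f_i^\gamma$ with parameter $r$, which is well known to be finite-valued, convex, continuously differentiable, and $\frac{1}{r}$-smooth (hence trivially lsc). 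Invoking the Moreau-envelope viewpoint also matches the smoothness and strong-convexity constants $L = \frac{1}{r}$, $\mu = \frac{\gamma}{1+r\gamma}$ promised in the surrounding text (the latter because $f_i^\gamma$ being $\gamma$-strongly convex makes $(f_i^\gamma)^*$ have $\frac{1}{\gamma}$-Lipschitz gradient, so $(h_i^{r,\gamma})^* = (f_i^\gamma)^* + \frac{r}{2}\|\cdot\|_2^2$ has $(\frac{1}{\gamma}+r)$-Lipschitz gradient, i.e. $h_i^{r,\gamma}$ is $\frac{1}{1/\gamma + r} = \frac{\gamma}{1+r\gamma}$-strongly convex). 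With $H^{r,\gamma}$ proper, convex, lsc, Fenchel–Moreau gives $(H^{r,\gamma})^{**} = H^{r,\gamma}$, completing the proof.
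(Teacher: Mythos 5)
Your proposal is correct and follows essentially the same route as the paper: extend $f_i^\gamma$ to $\tilde f_i^\gamma$ with value $+\infty$ off $S$, recognize $h_i^{r,\gamma}$ as the infimal convolution (Moreau envelope) of $\tilde f_i^\gamma$ with $\frac{1}{2r}\|\cdot\|_2^2$, apply $(g_1\,\square\,g_2)^* = g_1^*+g_2^*$, and use separability over blocks for $H^{r,\gamma}$. Your explicit swap-of-suprema computation and the Fenchel--Moreau argument for $(H^{r,\gamma})^{**}=H^{r,\gamma}$ are just more self-contained versions of steps the paper handles by citation and by a brief ``in the same way'' remark, so no substantive difference in approach.
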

 \begin{proof}
 The definition \eqref{eq: proof: decl H} is similar to Moreau--Yosida smoothing, but the tricky point is that the functions~$f_i^{\gamma}$ are defined on a convex set~$S$ instead of the $\R^d$. Let us introduce functions $\tilde{f}_i^{\gamma}$ with   domain $\R^d$ as follows:
 \begin{equation}
\label{eq tilde f}
    \tilde{f}_i^{\gamma}(x)=\left\{\begin{array}{cc} 
         &f_i^{\gamma}(x) \quad \mbox{ if $x\in S$}\\
         & +\infty, \quad\mbox{ otherwise}.
    \end{array}\right.
\end{equation}
Such $\tilde{f}_i^{\gamma}$ are $\gamma$ strongly convex as well. Moreover, substitution $\tilde{f}_i^{\gamma}$ for $f_i^{\gamma}$  affect neither primal $h_i^{r,\gamma}$:
\[h_i^{r,\gamma}(x)  = \inf\limits_{y\in S}\left\{f_i^{\gamma}(y) + \frac{1}{2r}\|y-x\|^2_2\right\}= \inf\limits_{y\in \R^d}\left\{\tilde{f}_i^{\gamma}(y) + \frac{1}{2r}\|y-x\|^2_2\right\},\]
nor $(f_i^{\gamma})^*(z) + \frac{r}{2}\|z\|^2_2$: 
\begin{align*}
(f^{\gamma}_i)^*(z) + \frac{r}{2}\|z\|^2_2=  \max\limits_{x \in S}\left\lbrace  \left\langle z, x\right\rangle  -  f^{\gamma}_i(x)\right\rbrace + \frac{r}{2}\|z\|^2_2 
\\
=\max\limits_{x \in \R^d}\left\lbrace  \left\langle z, x\right\rangle  - \tilde{f}^{\gamma}_i(x)\right\rbrace + \frac{r}{2}\|z\|^2_2 = (\tilde{f}^{\gamma}_i)^*(z)+ \frac{r}{2}\|z\|^2_2.
\end{align*}

For each $i$ one can see that $(h_i^{r,\gamma})^* = (f^{\gamma}_i)^*(z) + \frac{r}{2}\|z\|^2_2$ is the Fenchel conjugate of $h_i^{r,\gamma}$ and vice versa. Indeed, for proper, convex and lower semicontinuous $g_1, g_2\colon \R^d \to \R$ we have $(g_1+ g_2)^*(x) = g_1^* \square g_2^*$ and $(g_1 \square g_2)^* = g_1^* + g_2^*$, where $(g_1\square g_2)(x)$ means the convolution $\inf\{g_1(y) + g_2(x-y) \mid y\in \R^d \}$.

Hence the Fenchel conjugate for the function $H^{r,\gamma}$ will be 
\begin{gather}
    \sup\limits_{\bx\in (\R^d)^m} \left\{\langle \bz,\bx \rangle - H^{r,\gamma}(\bx)\right\} \notag
   \\
   = \sup\limits_{\bx\in (\R^d)^m} \left\{\sum\limits_{i=1}^{m} \left( \langle [\bz]_i,[\bx]_i \rangle - h_{i}^{r,\gamma}([\bx]_i)\right) \right\} \\
   =\sum\limits_{i=1}^{m} \sup\limits_{[\bx]_i\in \R^d} \left\{\langle [\bz]_i,[\bx]_i \rangle - h_{i}^{r,\gamma}([\bx]_i)\right\} \notag\\
   = \sum\limits_{i=1}^{m} (h_{i}^{r,\gamma})^*([\bz]_i) = (H^{r,\gamma})^*(\bz).\notag
 \end{gather}
  In the same way one can see that $H^{r,\gamma}$ and $(H^{r,\gamma})^{**}$ coincide.
 \end{proof}

\begin{remark}
\label{rem: our L mu as r gamma}
 For each $i$ the function $\left(h^{r,\gamma}_{i}\right)^*$ from \eqref{eq: proof: decl H} is $\left(\frac{1}{\gamma}+r\right)$ smooth and $r$ strongly convex by definition, so we have $h^{r,\gamma}_i = (h^{r,\gamma}_i)^{**}$ being $\frac{1}{r}$ smooth and $\frac{\gamma}{1+r\gamma}$ strongly convex. In addition 
 \[\nabla (h^{r,\gamma}_i)^*(z) = \nabla (f_i^{\gamma})^*(z) + z \] as stated in Line~\ref{eq: theorem declaration nablaH^*} of Algorithm \ref{alg:mod-adom}. Then we can apply Algorithm \ref{alg:adom} for $L = r^{-1}$ smooth and $\mu = \frac{\gamma}{1+r\gamma}$ strongly convex functions $h^{r,\gamma}_i$ and get the values of $\nabla (h^{r,\gamma}_i)^*(z)$ as output.
 \end{remark} 
 Thus we construct a relaxation  $\min_{\bx\in \cR}H^{r,\gamma}(\bx)$ of the constrained convex optimization problem $\min_{\bx\in \cS}F^{\gamma}(\bx)$.
 \begin{corollary}
 \label{cor: to apply adom to H}
 Let a function $H^{r,\gamma}$ be defined in \eqref{eq: proof: decl H} and let $\bx^*_{r,\gamma} = \argmin\limits_{\bx\in \cR}H^{r,\gamma}(\bx)$. Then applying Algorithm \ref{alg:adom} for \[\nabla (h^{r,\gamma}_i)^*(z) = (f_i^{\gamma})^*(z) + rz\] we get by Theorem \ref{thm:adom} 
 \begin{equation}
 \label{eq: cor: convergence in argument}
    \left\|\bx^{*}_{r,\gamma} - \bx^{n}_{r,\gamma}\right\|^2_2\leq C \left(1- \frac{\lminp}{7\lmax} \sqrt{\frac{r\gamma}{1+r\gamma}}\right)^n,
\end{equation} 
where $\bx_{r,\gamma}^n = \nabla (H^{r,\gamma})^*(\bz_g^n)$ 
and \[C =  \frac{(1+r\gamma)^2}{2\gamma^2}.\]
 Moreover, since $\bx^*_{r,\gamma} \in \cR$, i.e. $[\bx^*_{r,\gamma}]_i = [\bx^*_{r,\gamma}]_j$ for all $i$ and $j$, the consensus condition is approximated as follows
\begin{equation*}
    \left\|\left[\bx^{n}_{r,\gamma}\right]_i - \left[\bx^{n}_{r,\gamma}\right]_j\right\|^2_2\leq 2C \left(1- \frac{\lminp}{7\lmax} \sqrt{\frac{r\gamma}{1+r\gamma}}\right)^n.
\end{equation*} 
 \end{corollary}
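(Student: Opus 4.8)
\textbf{Proof proposal for Corollary~\ref{cor: to apply adom to H}.}

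The plan is to assemble the statement directly from the preceding Lemma and Remark~\ref{rem: our L mu as r gamma}, invoking Theorem~\ref{thm:adom} with the explicit constant from Remark~\ref{rem: adom exact C}. First I would record, via the Lemma, that $(H^{r,\gamma})^*$ is indeed the Fenchel conjugate of $H^{r,\gamma}=\sum_{i=1}^m h_i^{r,\gamma}$, and that $H^{r,\gamma}$ is convex, proper and lower semicontinuous, so that $H^{r,\gamma}=(H^{r,\gamma})^{**}$ and Algorithm~\ref{alg:adom} applied with the dual oracle $\nabla (h^{r,\gamma}_i)^*(z) = \nabla(f_i^{\gamma})^*(z) + rz$ is genuinely solving \eqref{eq: opt prob for ADOM setup with bx^*} for $h_i = h^{r,\gamma}_i$. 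By Remark~\ref{rem: our L mu as r gamma}, these $h^{r,\gamma}_i$ are $L$-smooth and $\mu$-strongly convex with $L = \tfrac1r$ and $\mu = \tfrac{\gamma}{1+r\gamma}$, so Assumption~\ref{assumption-lambdas} and the hypotheses of Theorem~\ref{thm:adom} are all met.

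Next I would substitute these values of $L$ and $\mu$ into the conclusion of Theorem~\ref{thm:adom}. The contraction factor becomes
\[
1-\frac{\lminp}{7\lmax}\sqrt{\frac{\mu}{L}} = 1-\frac{\lminp}{7\lmax}\sqrt{\frac{r\gamma}{1+r\gamma}},
\]
which is exactly the rate appearing in \eqref{eq: cor: convergence in argument}. For the constant, I would quote Remark~\ref{rem: adom exact C}, which gives $C = \tfrac{1}{2\mu^2}$; with $\mu = \tfrac{\gamma}{1+r\gamma}$ this is $C = \tfrac{(1+r\gamma)^2}{2\gamma^2}$, as claimed. Since the output of Algorithm~\ref{alg:mod-adom} is $\bx^n_{r,\gamma} = \nabla (H^{r,\gamma})^*(\bz^n_g)$ and the minimizer of \eqref{eq: opt prob for ADOM setup with bx^*} for $h_i=h^{r,\gamma}_i$ is $\bx^*_{r,\gamma} = \argmin_{\bx\in\cR}H^{r,\gamma}(\bx) = \nabla(H^{r,\gamma})^*(\bz^*)$ at the dual optimum, the bound $\|\nabla H^*(\bz_g^n)-\bx^*\|_2^2 \le C(\cdots)^n$ of Theorem~\ref{thm:adom} is precisely \eqref{eq: cor: convergence in argument}.

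Finally, for the consensus estimate I would use that $\bx^*_{r,\gamma}\in\cR$, so $[\bx^*_{r,\gamma}]_i=[\bx^*_{r,\gamma}]_j$ for all $i,j$; then the triangle inequality together with $(a+b)^2\le 2a^2+2b^2$ gives
\[
\left\|[\bx^n_{r,\gamma}]_i-[\bx^n_{r,\gamma}]_j\right\|_2^2 \le 2\left\|[\bx^n_{r,\gamma}]_i-[\bx^*_{r,\gamma}]_i\right\|_2^2 + 2\left\|[\bx^n_{r,\gamma}]_j-[\bx^*_{r,\gamma}]_j\right\|_2^2 \le 4\left\|\bx^n_{r,\gamma}-\bx^*_{r,\gamma}\right\|_2^2,
\]
and combining with \eqref{eq: cor: convergence in argument} yields the stated $2C(\cdots)^n$ bound after noting the two summands are each at most $\|\bx^n_{r,\gamma}-\bx^*_{r,\gamma}\|_2^2$. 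I do not anticipate a genuine obstacle here: the only subtle point — that the Moreau--Yosida-type smoothing over the convex set $S$ still has the clean conjugate $(f_i^{\gamma})^* + \tfrac r2\|\cdot\|^2$ with full domain $\R^d$ — has already been dispatched in the Lemma via the extension $\tilde f_i^{\gamma}$; what remains is bookkeeping to match notation and constants.
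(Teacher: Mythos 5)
Your proposal is correct and follows essentially the same route as the paper: the corollary is a direct instantiation of Theorem~\ref{thm:adom} with $L=\tfrac1r$, $\mu=\tfrac{\gamma}{1+r\gamma}$ (Remark~\ref{rem: our L mu as r gamma}) and the explicit constant $C=\tfrac{1}{2\mu^2}$ from Remark~\ref{rem: adom exact C}, followed by the triangle inequality for the consensus bound. One small wording fix: the factor $2C$ comes from the fact that the two block norms $\|[\bx^n_{r,\gamma}]_i-[\bx^*_{r,\gamma}]_i\|_2^2$ and $\|[\bx^n_{r,\gamma}]_j-[\bx^*_{r,\gamma}]_j\|_2^2$ \emph{together} sum to at most $\|\bx^n_{r,\gamma}-\bx^*_{r,\gamma}\|_2^2$ (disjoint coordinate blocks), not that each is separately at most half of it.
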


\subsection{Value bounds on $H^{r,\gamma}$}
\label{subsect: auxiliary properties}

Despite we defined $h_i^{r,\gamma}$ for all $\R^d$, some properties hold true on the initial set $S$ only.
\begin{lemma}
Let functions $h_i^{r,\gamma}$ be defined in \eqref{eq: proof: decl H}. If $x\in S$, then for any $r>0$, for each $i = 1,\ldots,m$ we have
\begin{equation}
\begin{array}{rl}
\label{eq-bounds_on_g**}
f^{\gamma}_i(x) - \frac{r}{2(1+r\gamma)}\left\|\nabla f^{\gamma}_i(x)\right\|^2_2 \leq h_i^{r,\gamma}(x) \leq f^{\gamma}_i(x).
 \end{array}
\end{equation} 
\end{lemma}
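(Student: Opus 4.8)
The statement bounds the Moreau–Yosida-type envelope $h_i^{r,\gamma}$ from both sides on the set $S$, and both inequalities are immediate from the definition
\[
h_i^{r,\gamma}(x) = \inf_{y\in S}\left\{f_i^{\gamma}(y) + \tfrac{1}{2r}\|y-x\|_2^2\right\}.
\]
First I would prove the \emph{upper bound}: since $x\in S$ itself, $y=x$ is admissible in the infimum, giving $h_i^{r,\gamma}(x)\le f_i^{\gamma}(x) + \tfrac{1}{2r}\|x-x\|_2^2 = f_i^{\gamma}(x)$. This is the easy half and requires only that $x\in S$ (which is why the bound is stated on $S$ and not on all of $\R^d$).

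For the \emph{lower bound} I would use strong convexity of $f_i^{\gamma}$. For any $y\in S$, $\gamma$-strong convexity gives
\[
f_i^{\gamma}(y) \ge f_i^{\gamma}(x) + \langle \nabla f_i^{\gamma}(x), y-x\rangle + \tfrac{\gamma}{2}\|y-x\|_2^2,
\]
so for every $y\in S$
\[
f_i^{\gamma}(y) + \tfrac{1}{2r}\|y-x\|_2^2 \ge f_i^{\gamma}(x) + \langle \nabla f_i^{\gamma}(x), y-x\rangle + \tfrac{1}{2}\!\left(\gamma + \tfrac{1}{r}\right)\|y-x\|_2^2.
\]
Setting $v = y-x$ and minimizing the quadratic lower bound $\langle \nabla f_i^{\gamma}(x), v\rangle + \tfrac12(\gamma + r^{-1})\|v\|_2^2$ over \emph{all} $v\in\R^d$ (a valid relaxation since enlarging the feasible set only decreases the infimum) yields the minimizer $v^* = -(\gamma + r^{-1})^{-1}\nabla f_i^{\gamma}(x)$ and minimal value $-\tfrac{1}{2(\gamma + r^{-1})}\|\nabla f_i^{\gamma}(x)\|_2^2 = -\tfrac{r}{2(1+r\gamma)}\|\nabla f_i^{\gamma}(x)\|_2^2$. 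Hence
\[
h_i^{r,\gamma}(x) \ge f_i^{\gamma}(x) - \tfrac{r}{2(1+r\gamma)}\|\nabla f_i^{\gamma}(x)\|_2^2,
\]
which is the claimed left inequality.

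\textbf{Main obstacle.} There is no serious obstacle here — the only subtlety worth flagging is the interplay between the constraint $y\in S$ and the unconstrained minimization used in the lower bound: one must note that relaxing $y\in S$ to $y\in\R^d$ is legitimate because it can only lower the infimum, so the resulting inequality still holds for the constrained $h_i^{r,\gamma}$. (One should also remark that differentiability of $f_i^{\gamma}$ on $S$ is what makes $\nabla f_i^{\gamma}(x)$ meaningful in the statement; if $x$ is a boundary point of $S$ one interprets $\nabla f_i^{\gamma}(x)$ in the sense already fixed in Theorem~\ref{theorem: strongly convex functions}, where the $f_i^{\gamma}$ are assumed differentiable.) The upper bound and lower bound together give \eqref{eq-bounds_on_g**}; no further estimates are needed.
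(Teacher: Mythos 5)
Your proof is correct and follows essentially the same route as the paper: the upper bound by taking $y=x$ in the infimum, and the lower bound by substituting the $\gamma$-strong-convexity quadratic minorant, relaxing the constraint $y\in S$ to all of $\R^d$, and minimizing the resulting quadratic in closed form to obtain the constant $\frac{r}{2(1+r\gamma)}$. No gaps; nothing further is needed.
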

\begin{proof}
The second inequality directly follows from the definition \eqref{eq: proof: decl H}. To prove the first one we recall that $f^{\gamma}_i$ is $\gamma$ strongly convex and the following holds:
\begin{gather*}
h_i^{r,\gamma}(x) =\inf\limits_{y \in S} \left\{f^{\gamma}_i(y) +  (2r)^{-1}\|x-y\|^2_2\right\} 
\\=\inf\limits_{y: \ (x-y)\in S} \left\{f^{\gamma}_i(x-y) +  (2r)^{-1}\|y\|^2_2\right\}
\\ \geq \inf\limits_{y: \ (x-y)\in S} \left\{f^{\gamma}_i(x) + \langle \nabla f^{\gamma}_i(x), -y \rangle + \gamma/2 \|y\|^2_2 +  (2r)^{-1}\|y\|^2_2\right\}\\
\geq \inf\limits_{y\in \R^d} \left\{f^{\gamma}_i(x) + \langle \nabla f^{\gamma}_i(x), -y \rangle + \gamma/2 \|y\|^2_2 +  (2r)^{-1}\|y\|^2_2\right\},
\end{gather*}
which reaches its minimum at $y=\frac{r}{1+r\gamma}\nabla f^{\gamma}_i(x)$ and so equals to
\begin{gather*}
     f^{\gamma}_i(x) + \frac{r}{1+r\gamma} \langle -\nabla f^{\gamma}_i(x), \nabla f^{\gamma}_i(x) \rangle + \frac{r}{2(1+r\gamma)}\|\nabla f^{\gamma}_i(x)\|^2_2
    \\=f^{\gamma}_i(x) - \frac{r}{2(1+r\gamma)}\|\nabla f^{\gamma}_i(x)\|^2_2.
\end{gather*}
\end{proof}

\subsection{Convergence in argument}

Lemma \ref{lem: dist betweem argmins} shows convergence in argument in the following sense: if the regularization parameter~$r$ tends to zero,  the argminimum $\bx^*\rgam\in \cR$ of $H^{r,\gamma}$ tends to the argminimum $\bx^*_{\gamma}\in \cS$ of $F^{\gamma}$. By Corollary~\ref{cor: to apply adom to H} we have $\bx^*\rgam\in \cR$ approximated by $\bx^n\rgam\in (\R^d )^m$ for a sufficient  number of iterations~$n$.

\begin{lemma}
\label{lem: dist betweem argmins}
 Let $\bx^*_{r,\gamma} = \argmin_{\bx\in \cR} H^{r,\gamma}(\bx)$ for $H^{r,\gamma}$ defined in \eqref{eq: proof: decl H}. Let
\begin{equation}
    \label{eq: assumption for K-xi and nablas}
    \|\nabla F^{\gamma}(\bx)\|_2^2 \leq m K_{\zeta}^2  \quad  \forall \bx \in \{\by\in \cS \mid \|\by -\bx^*_{\gamma}\|_2\leq \zeta\}.
    \end{equation}
If $r$ is such that $\|\bx^*\rgam - \bx^*_{\gamma}\|_2\leq \zeta$, then
 \begin{equation}
    \|\bx^*\rgam - \bx^*_{\gamma}\|_2\leq \sqrt{\frac{rm}{2\gamma}}K_{\zeta}.
\end{equation}
\end{lemma}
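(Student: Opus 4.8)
The plan is to bound $\|\bx^*_{r,\gamma}-\bx^*_{\gamma}\|_2^2$ by \emph{adding} two complementary strong-convexity inequalities — one for $H^{r,\gamma}$ minimized over the subspace $\cR$, one for $F^{\gamma}$ minimized over the convex set $\cS$ — and then bridging the two objectives with the sandwich bound \eqref{eq-bounds_on_g**}. All the data needed is already in place: $F^{\gamma}=\sum_i f_i^{\gamma}([\cdot]_i)$ is $\gamma$-strongly convex, and by Remark~\ref{rem: our L mu as r gamma} the relaxation $H^{r,\gamma}$ is $\mu:=\frac{\gamma}{1+r\gamma}$-strongly convex and $\frac1r$-smooth.

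\emph{Step 1: $\bx^*_{r,\gamma}$ lies in $\cS$.} Before anything else I would check that each block of $\bx^*_{r,\gamma}$ belongs to $S$, so that $F^{\gamma}$, the sandwich \eqref{eq-bounds_on_g**}, and the gradient bound \eqref{eq: assumption for K-xi and nablas} are all meaningful at $\bx^*_{r,\gamma}$. Since $\cR$ is the consensus subspace, $\bx^*_{r,\gamma}=(x^*,\ldots,x^*)$ with $x^*=\argmin_{x\in\R^d}\sum_i h_i^{r,\gamma}(x)$; as each $h_i^{r,\gamma}$ is the Moreau envelope of $\tilde f_i^{\gamma}$, one has $\nabla h_i^{r,\gamma}(x)=\frac1r(x-y_i(x))$ with $y_i(x)\in S$ the proximal point, so the optimality condition $\sum_i\nabla h_i^{r,\gamma}(x^*)=0$ forces $x^*=\frac1m\sum_i y_i(x^*)\in S$ by convexity of $S$. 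This is the only genuinely delicate point; without it $F^{\gamma}(\bx^*_{r,\gamma})$ is not even defined.

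\emph{Step 2: two strong-convexity inequalities.} Since $\bx^*_{r,\gamma}$ minimizes the smooth convex $H^{r,\gamma}$ over the \emph{subspace} $\cR\ni\bx^*_{\gamma}$, the first-order term vanishes and strong convexity gives
\[
H^{r,\gamma}(\bx^*_{\gamma})-H^{r,\gamma}(\bx^*_{r,\gamma})\ \ge\ \frac{\mu}{2}\,\|\bx^*_{\gamma}-\bx^*_{r,\gamma}\|_2^2 .
\]
Likewise $\bx^*_{\gamma}$ minimizes the $\gamma$-strongly convex $F^{\gamma}$ over the convex set $\cS$, which contains $\bx^*_{r,\gamma}$ by Step~1, so the variational inequality yields
\[
F^{\gamma}(\bx^*_{r,\gamma})-F^{\gamma}(\bx^*_{\gamma})\ \ge\ \frac{\gamma}{2}\,\|\bx^*_{r,\gamma}-\bx^*_{\gamma}\|_2^2 .
\]

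\emph{Step 3: bridge and conclude.} Adding the two displays and regrouping,
\[
\bigl[H^{r,\gamma}(\bx^*_{\gamma})-F^{\gamma}(\bx^*_{\gamma})\bigr]+\bigl[F^{\gamma}(\bx^*_{r,\gamma})-H^{r,\gamma}(\bx^*_{r,\gamma})\bigr]\ \ge\ \frac{\gamma+\mu}{2}\,\|\bx^*_{r,\gamma}-\bx^*_{\gamma}\|_2^2 .
\]
The first bracket is $\le 0$ by the right-hand inequality of \eqref{eq-bounds_on_g**} summed over $i$; the second bracket is $\le\frac{r}{2(1+r\gamma)}\|\nabla F^{\gamma}(\bx^*_{r,\gamma})\|_2^2$ by the left-hand inequality of \eqref{eq-bounds_on_g**} summed over $i$ (using $\|\nabla F^{\gamma}(\bx)\|_2^2=\sum_i\|\nabla f_i^{\gamma}([\bx]_i)\|_2^2$). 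Since $\|\bx^*_{r,\gamma}-\bx^*_{\gamma}\|_2\le\zeta$ by hypothesis and $\bx^*_{r,\gamma}\in\cS$ by Step~1, assumption \eqref{eq: assumption for K-xi and nablas} gives $\|\nabla F^{\gamma}(\bx^*_{r,\gamma})\|_2^2\le mK_{\zeta}^2$. Substituting $\mu=\frac{\gamma}{1+r\gamma}$, so $\gamma+\mu=\gamma\,\frac{2+r\gamma}{1+r\gamma}$, the factors $1+r\gamma$ cancel and
\[
\|\bx^*_{r,\gamma}-\bx^*_{\gamma}\|_2^2\ \le\ \frac{rmK_{\zeta}^2}{\gamma(2+r\gamma)}\ \le\ \frac{rm}{2\gamma}\,K_{\zeta}^2 ,
\]
which is the claim; note the surplus $2+r\gamma\ge 2$ in the denominator is exactly what produces the factor $\tfrac1{\sqrt2}$ in the statement. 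The main obstacle is Step~1, i.e. keeping the regularized minimizer inside $(S)^m$ despite $h_i^{r,\gamma}$ being finite on all of $\R^d$; the rest is a two-line rearrangement.
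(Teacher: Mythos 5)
Your proof is correct and follows essentially the same route as the paper: both arguments combine the strong-convexity inequalities for $H^{r,\gamma}$ at its minimizer over $\cR$ and for $F^{\gamma}$ at its minimizer over $\cS$, bridge the two objectives with the sandwich bound \eqref{eq-bounds_on_g**}, and bound $\|\nabla F^{\gamma}(\bx^*_{r,\gamma})\|_2^2$ by $mK_{\zeta}^2$; the paper writes this as a single chain of inequalities while you add the two displays, which yields the identical final bound. Your Step~1 (showing $\bx^*_{r,\gamma}\in\cS$ via the prox representation of $\nabla h_i^{r,\gamma}$ and convexity of $S$) is a worthwhile addition, since the paper uses $F^{\gamma}(\bx^*_{r,\gamma})$, the sandwich, and the gradient bound at $\bx^*_{r,\gamma}$ without explicitly verifying that this point lies in the domain.
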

 \begin{proof}
 Using \eqref{eq-bounds_on_g**} and strong convexity of $F^{\gamma}$ and $H^{r,\gamma}$ we have
 \begin{gather*}
     F^{\gamma} (\bx^*_{\gamma}) \geq H^{r,\gamma}(\bx^*_{\gamma}) = \sum h_i^{r,\gamma}([\bx^*_{\gamma}]_i) 
     \\
     \geq \sum\limits_{i=1}^m \left(h_i^{r,\gamma}(\bx^*_{r,\gamma}) + \frac{\gamma}{2(1+r\gamma)}\|[\bx^*\rgam]_i - [\bx^*_{\gamma}]_i\|_2^2\right) 
     \\
     = H^{r,\gamma}(\bx^*_{r,\gamma}) + \frac{\gamma}{2(1+r\gamma)}\|\bx^*\rgam - \bx^*_{\gamma}\|_2^2
     \\
     \geq F^{\gamma}(\bx^*\rgam) - \frac{r}{2(1+r\gamma)}\|\nabla F^{\gamma}(\bx^*\rgam)\|^2_2 + \frac{\gamma}{2(1+r\gamma)}\|\bx^*\rgam - \bx^*_{\gamma}\|_2^2
     \\
     \geq F^{\gamma}(\bx^*\rgam) - \frac{r}{2(1+r\gamma)}mK_{\zeta}^2 + \frac{\gamma}{2(1+r\gamma)}\|\bx^*\rgam - \bx^*_{\gamma}\|_2^2
     \\
     \geq F^{\gamma}(\bx^*_{\gamma}) + \gamma/2 \|\bx^*\rgam - \bx^*_{\gamma}\|_2^2 - \frac{r}{2(1+r\gamma)}mK_{\zeta}^2 + \frac{\gamma}{2(1+r\gamma)}\|\bx^*\rgam - \bx^*_{\gamma}\|_2^2
     \\
     \geq F^{\gamma}(\bx^*_{\gamma}) + \frac{\gamma}{1+r\gamma} \|\bx^*\rgam - \bx^*_{\gamma}\|_2^2 - \frac{r}{2(1+r\gamma)}mK_{\zeta}^2.
 \end{gather*}
 Then $\frac{\gamma}{1+r\gamma} \|\bx^*\rgam - \bx^*_{\gamma}\|_2^2 - \frac{r}{2(1+r\gamma)}mK_{\zeta}^2\leq 0$ and hence
 $\|\bx^*\rgam - \bx^*_{\gamma}\|^2_2 \leq \frac{rm}{2\gamma}K^2_{\zeta}$.
\end{proof}

Combining Lemma \ref{lem: dist betweem argmins} with Corollary \ref{cor: to apply adom to H} we get the following.
\begin{remark}
\label{rem: cond on zeta}
Let $\zeta>0$ and let $K_{\zeta}$ be such that \eqref{eq: assumption for K-xi and nablas} holds.
If
\begin{align*}
    \sqrt{\frac{rm}{2\gamma}}K_{\zeta}+\sqrt{C_1}\left(1-\frac{\lminp}{7\lmax}\sqrt{\frac{r\gamma}{1+r\gamma}}\right)^{n/2}\leq\zeta,
\end{align*}
where $C_1 = \frac{(1+r\gamma)^2}{2\gamma^2}$, then both %
$\|\bx^*\rgam - \bx^*_{\gamma}\|_2\leq \zeta$ and $\|\bx^n\rgam - \bx^*_{\gamma}\|_2\leq \zeta$ hold.
\end{remark}

\subsection{Value approximation}

Let $\bx^*_{r,\gamma}\in \cR$ be the only argminimum of $H^{r,\gamma}$ on the consensus space $\cR$, i.e.
\begin{equation}
    \begin{array}{cc}
         \bx^*_{r,\gamma} = \argmin\limits_{\bx\in \cS} H^{r,\gamma}(\bx).
    \end{array}
\end{equation}
In order to prove the value approximation \eqref{eq: theorem: value approximation} let us  separate it into parts and estimate each of them:
\begin{subequations}
\begin{gather}
    F^{\gamma}(\bx^{n}_{r,\gamma}) - F^{\gamma}(\bx^*_{\gamma})
        \\ \leq  F^{\gamma}(\bx^n_{r,\gamma}) - H^{r,\gamma}(\bx^{n}_{r,\gamma})  \label{eq: separate 1}
        \\ +  H^{r,\gamma}(\bx^n_{r,\gamma}) - H^{r,\gamma}(\bx^{*}_{r,\gamma}) \label{eq: separate 2}
        \\ +   H^{r,\gamma}(\bx^{*}_{r,\gamma}) - F^{\gamma}(\bx^{*}_{\gamma}). \label{eq: separate 3}
\end{gather}
\end{subequations}
The last addend is negative and can be eliminated:
\begin{equation*}
     H^{r,\gamma}(\bx^{*}_{r,\gamma}) - F^{\gamma}(\bx^{*}_{\gamma}) \leq  H^{r,\gamma}(\bx^{*}_{\gamma}) - F^{\gamma}(\bx^{*}_{\gamma}) \leq 0.
\end{equation*}
The rest are estimated in Lemmas \ref{lem: addend 1} and \ref{lem: c2 bound} under additional assumptions.
\begin{lemma}
\label{lem: addend 1}
Let $\|\bx^n\rgam - \bx^*_{\gamma}\|_2\leq \zeta$. If \eqref{eq: assumption for K-xi and nablas} holds, then
\begin{align}
\label{eq: from lemma}
        F^{\gamma}(\bx^n_{r,\gamma}) - H^{r,\gamma}(\bx^{n}_{r,\gamma})  \leq \frac{r}{2(1+r\gamma)}mK^2_{\zeta}.
\end{align}
\end{lemma}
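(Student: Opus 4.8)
The plan is to read the bound off the coordinatewise envelope estimate~\eqref{eq-bounds_on_g**} proved just above, summed over the $m$ nodes; the lemma is essentially a one-line consequence, so the work is mainly bookkeeping. First I would record that the output of Algorithm~\ref{alg:mod-adom} is $\bx^n_{r,\gamma}=\nabla(H^{r,\gamma})^*(\bz_g^n)$, whose $i$-th block equals $[\bx^n_{r,\gamma}]_i=\nabla(h_i^{r,\gamma})^*([\bz_g^n]_i)$, and that these blocks lie in $S$; this is exactly what makes $F^\gamma(\bx^n_{r,\gamma})=\sum_{i=1}^m f_i^\gamma([\bx^n_{r,\gamma}]_i)$ meaningful and lets me apply the left inequality of~\eqref{eq-bounds_on_g**} at each point $x=[\bx^n_{r,\gamma}]_i$.

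Next I would apply that inequality at $x=[\bx^n_{r,\gamma}]_i$ for $i=1,\dots,m$, sum, and use that $F^\gamma$ and $H^{r,\gamma}$ are separable over the blocks, so that $\|\nabla F^\gamma(\bx)\|_2^2=\sum_i\|\nabla f_i^\gamma([\bx]_i)\|_2^2$; this yields
\[
F^\gamma(\bx^n_{r,\gamma})-H^{r,\gamma}(\bx^n_{r,\gamma})=\sum_{i=1}^m\big(f_i^\gamma([\bx^n_{r,\gamma}]_i)-h_i^{r,\gamma}([\bx^n_{r,\gamma}]_i)\big)\le\frac{r}{2(1+r\gamma)}\left\|\nabla F^\gamma(\bx^n_{r,\gamma})\right\|_2^2 .
\]
Then, since $\|\bx^n_{r,\gamma}-\bx^*_\gamma\|_2\le\zeta$ and $\bx^*_\gamma=(x^*_\gamma,\dots,x^*_\gamma)$, every block satisfies $\|[\bx^n_{r,\gamma}]_i-x^*_\gamma\|_2\le\zeta$, so the gradient bound behind~\eqref{eq: assumption for K-xi and nablas} gives $\|\nabla F^\gamma(\bx^n_{r,\gamma})\|_2^2\le mK_\zeta^2$; substituting this into the display proves~\eqref{eq: from lemma}.

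I do not expect a real obstacle here: all the genuine content lies in~\eqref{eq-bounds_on_g**}. The two points that deserve a sentence of justification are (i) the feasibility $[\bx^n_{r,\gamma}]_i\in S$ of the output of Algorithm~\ref{alg:mod-adom}, which is used implicitly elsewhere (e.g.\ in Lemma~\ref{lem: dist betweem argmins}, where $F^\gamma$ is evaluated at such iterates) and is needed for $F^\gamma(\bx^n_{r,\gamma})$ to be legitimate, and (ii) the fact that~\eqref{eq: assumption for K-xi and nablas}, though stated over the consensus set $\cS$, is essentially the coordinatewise bound $\|\nabla f_i^\gamma(x)\|_2\le K_\zeta$ valid on the whole $\zeta$-ball around $x^*_\gamma$ in $S$, hence it applies to the iterate $\bx^n_{r,\gamma}$ (which need not lie exactly in $\cS$) once $\|\bx^n_{r,\gamma}-\bx^*_\gamma\|_2\le\zeta$ — the latter condition being guaranteed for large $n$ via Corollary~\ref{cor: to apply adom to H} and Remark~\ref{rem: cond on zeta}.
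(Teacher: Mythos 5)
Your proof is correct and follows essentially the same route as the paper: apply the left inequality of \eqref{eq-bounds_on_g**} blockwise, sum over the $m$ nodes, and invoke the gradient bound \eqref{eq: assumption for K-xi and nablas} on the $\zeta$-neighborhood of $\bx^*_\gamma$. The two caveats you flag --- feasibility $[\bx^n_{r,\gamma}]_i\in S$ and the coordinatewise reading of \eqref{eq: assumption for K-xi and nablas} off the consensus set --- are left implicit in the paper as well, so raising them is a modest improvement rather than a divergence.
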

\begin{proof}
We cannot declare a uniform $K$ instead of $K_{\zeta}$ because $F^{\gamma}$ is not smooth. Nonetheless, assuming %
$\bx^{n}_{r,\gamma}$ belong to $\zeta$-neighborhood of $\bx^*_{\gamma}$,  we immediately obtain  from \eqref{eq-bounds_on_g**} and \eqref{eq: assumption for K-xi and nablas} that
\begin{equation*}
    F^{\gamma}(\bx^n_{r,\gamma}) - H^{r,\gamma}(\bx^{n}_{r,\gamma}) \leq \frac{r}{2(1+r\gamma)}\|\nabla F^{\gamma}(\bx^{n}_{r,\gamma})\|^2_2 \leq \frac{r}{2(1+r\gamma)}mK^2_{\zeta}.
\end{equation*}
\end{proof}
\begin{lemma}
\label{lem: c2 bound}
Let \eqref{eq: assumption for K-xi and nablas} holds.
Then
\begin{align*}
H^{r,\gamma}(\bx^{n}_{r,\gamma}) - H^{r,\gamma}(\bx^*_{r,\gamma}) 
&\leq C_2 \left( 1 - \frac{\lminp}{7\lmax} \sqrt{\frac{r\gamma}{1+r\gamma}}\right)^{n/2},
\\
\mbox{where }\quad C_2 &= 
    \frac{m(1+r\gamma)K_{\zeta}}{\sqrt{2}\gamma}\sqrt{\frac{\lmax}{\lminp}} 
    + \frac{m(1+r\gamma)^2}{4r\gamma^2}.
\end{align*}
\end{lemma}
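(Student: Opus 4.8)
The plan is to upgrade the argument‑convergence estimate of Corollary~\ref{cor: to apply adom to H} to a function‑value estimate using the $\tfrac1r$‑smoothness of $H^{r,\gamma}$ established in Remark~\ref{rem: our L mu as r gamma}. Write $\rho:=1-\frac{\lminp}{7\lmax}\sqrt{\frac{r\gamma}{1+r\gamma}}\in(0,1)$ and $\bx^*:=\bx^*_{r,\gamma}$. Corollary~\ref{cor: to apply adom to H} gives $\|\bx^n_{r,\gamma}-\bx^*\|_2^2\le C_1\rho^n$ with $C_1=\tfrac{(1+r\gamma)^2}{2\gamma^2}$, hence both $\|\bx^n_{r,\gamma}-\bx^*\|_2\le\sqrt{C_1}\,\rho^{n/2}$ and $\|\bx^n_{r,\gamma}-\bx^*\|_2^2\le C_1\rho^{n/2}$ (the latter since $\rho<1$). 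Smoothness then yields
\begin{equation*}
H^{r,\gamma}(\bx^n_{r,\gamma})-H^{r,\gamma}(\bx^*)\le\langle\nabla H^{r,\gamma}(\bx^*),\bx^n_{r,\gamma}-\bx^*\rangle+\frac{1}{2r}\|\bx^n_{r,\gamma}-\bx^*\|_2^2,
\end{equation*}
and the quadratic term is at most $\tfrac{C_1}{2r}\rho^{n/2}=\tfrac{(1+r\gamma)^2}{4r\gamma^2}\rho^{n/2}$, which (up to the factor $m$ collected below) is the second summand of $C_2$; it remains to control the linear term.

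For the linear term, note that $\bx^*=\bx^*_{r,\gamma}$ minimizes $H^{r,\gamma}$ over the consensus subspace $\cR$, so first‑order optimality gives $\nabla H^{r,\gamma}(\bx^*)\in\cR^{\perp}$; since $\bx^*\in\cR$ this means $\langle\nabla H^{r,\gamma}(\bx^*),\bx^n_{r,\gamma}-\bx^*\rangle=\langle\nabla H^{r,\gamma}(\bx^*),P_{\cR^{\perp}}\bx^n_{r,\gamma}\rangle$, i.e.\ only the consensus residual of the iterate enters. As both vectors lie in $\cR^{\perp}=\range(\bW_n)$, I would estimate this inner product by Cauchy--Schwarz in the $\bW_n$‑metric, $\langle u,v\rangle\le\langle\bW_n^{+}u,u\rangle^{1/2}\langle\bW_nv,v\rangle^{1/2}$, and use Assumption~\ref{assumption-lambdas} to return to the Euclidean norm at the cost of the factor $\sqrt{\lmax/\lminp}$; the residual $\|P_{\cR^{\perp}}\bx^n_{r,\gamma}\|_2$ is then controlled by the consensus estimate \eqref{eq: theorem: consensus} (equivalently by $\|\bx^n_{r,\gamma}-\bx^*\|_2$) and is $\cO(\rho^{n/2})$. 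For $\|\nabla H^{r,\gamma}(\bx^*_{r,\gamma})\|_2$ I would use that a Moreau‑type gradient is dominated by the original (sub)gradient, $\|\nabla h_i^{r,\gamma}(x)\|_2\le\|\nabla f_i^{\gamma}(x)\|_2$ whenever $x$ is interior to $S$; by Lemma~\ref{lem: dist betweem argmins} and Remark~\ref{rem: cond on zeta}, for $r$ small enough $\bx^*_{r,\gamma}$ lies in the $\zeta$‑neighbourhood of $\bx^*_{\gamma}$ (and interior to $S^m$), so $\|\nabla H^{r,\gamma}(\bx^*_{r,\gamma})\|_2^2\le\|\nabla F^{\gamma}(\bx^*_{r,\gamma})\|_2^2\le mK_{\zeta}^2$ by \eqref{eq: assumption for K-xi and nablas}.

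Assembling the pieces — $\|\nabla H^{r,\gamma}(\bx^*_{r,\gamma})\|_2\le\sqrt m\,K_{\zeta}$, the network factor $\sqrt{\lmax/\lminp}$, the residual bound together with $\sqrt{C_1}=\tfrac{1+r\gamma}{\sqrt2\,\gamma}$, and the smoothness term $\tfrac{C_1}{2r}\rho^{n/2}$ — and collecting the (routine) constants gives $H^{r,\gamma}(\bx^n_{r,\gamma})-H^{r,\gamma}(\bx^*_{r,\gamma})\le C_2\,\rho^{n/2}$ with $C_2=\frac{m(1+r\gamma)K_{\zeta}}{\sqrt2\,\gamma}\sqrt{\frac{\lmax}{\lminp}}+\frac{m(1+r\gamma)^2}{4r\gamma^2}$. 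I expect the main obstacle to be the bound on $\|\nabla H^{r,\gamma}(\bx^*_{r,\gamma})\|_2$: because $h_i^{r,\gamma}$ is an infimal convolution over the constrained set $S$ rather than over $\R^d$, its gradient at a boundary point of $S$ carries a normal‑cone term that is not controlled by \eqref{eq: assumption for K-xi and nablas}, so one must genuinely use that the solution — hence $\bx^*_{r,\gamma}$ for small $r$ — sits in the interior; a secondary nuisance is routing the network factor $\sqrt{\lmax/\lminp}$ through the orthogonality/Cauchy--Schwarz step so that the constant matches the one stated.
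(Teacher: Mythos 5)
Your overall decomposition is the same as the paper's: the $\tfrac1r$-smoothness inequality at $\bx^*_{r,\gamma}$, the quadratic term controlled by the argument-convergence bound of Corollary~\ref{cor: to apply adom to H}, and the linear term reduced via $\nabla H^{r,\gamma}(\bx^*_{r,\gamma})\in\cRperp$ to an inner product with the consensus residual of the iterate, finished with the bound $\|\nabla H^{r,\gamma}(\bx^*_{r,\gamma})\|_2\le\sqrt m\,K_{\zeta}$. Where you genuinely diverge is in how the linear term's decay is obtained. The paper works in the dual: it writes the linear term as $\langle \bz_g^{\infty},\bP\nabla (H^{r,\gamma})^*(\bz_g^{n})\rangle$, invokes Lemma~2 of \cite{kovalev2021adom} to bound $\|\bP\nabla (H^{r,\gamma})^*(\bz_g^{n})\|_2^2$ by a dual function-value gap, and then uses the $\tfrac{m(1+r\gamma)}{\gamma}$-smoothness of $(H^{r,\gamma})^*$ together with the internal estimate $\|\bz_g^n-\bz^*\|_2^2\le\tfrac12\rho^n$ extracted from the proof of Theorem~\ref{thm:adom}; this is where its $\sqrt{\lmax/\lminp}$ arises. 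You instead stay in the primal and bound $\|P_{\cRperp}\bx^{n}_{r,\gamma}\|_2=\|P_{\cRperp}(\bx^{n}_{r,\gamma}-\bx^*_{r,\gamma})\|_2\le\|\bx^{n}_{r,\gamma}-\bx^*_{r,\gamma}\|_2\le\sqrt{C_1}\,\rho^{n/2}$, which is already available from \eqref{eq: cor: convergence in argument}. This is simpler, does not reach into the internals of the ADOM analysis, and in fact plain Cauchy--Schwarz already suffices: your detour through the $\bW_n$-weighted Cauchy--Schwarz only inserts a factor $\sqrt{\lmax/\lminp}\ge1$ to match the stated constant, and your route actually yields the slightly sharper $\sqrt m$ in place of the paper's $m$ in the first summand of $C_2$ — either way your bound implies the lemma as stated. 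Two remarks. First, both proofs rest on the step $\|\nabla H^{r,\gamma}(\bx^*_{r,\gamma})\|_2\le\sqrt m\,K_{\zeta}$, which the paper asserts with only the phrase ``by our assumption''; your justification via the monotonicity argument ($\nabla h_i^{r,\gamma}(x)=\tfrac{x-y}{r}\in\partial\tilde f_i^{\gamma}(y)$ at the prox point $y$, hence dominated in norm by any element of $\partial\tilde f_i^{\gamma}(x)$) is the right way to make it rigorous, and note it does not actually require interiority: since $0\in N_S(x)$, the vector $\nabla f_i^{\gamma}(x)$ is always an element of $\partial\tilde f_i^{\gamma}(x)$ for $x\in S$, so $\|\nabla h_i^{r,\gamma}(x)\|_2\le\|\nabla f_i^{\gamma}(x)\|_2$ holds on all of $S$. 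Second, like the paper, you implicitly need $\bx^*_{r,\gamma}$ to lie in the $\zeta$-neighbourhood of $\bx^*_{\gamma}$ so that \eqref{eq: assumption for K-xi and nablas} applies; this is supplied by Lemma~\ref{lem: dist betweem argmins} and Remark~\ref{rem: cond on zeta} for $r$ small enough, and you correctly flag it.
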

\begin{proof}
By $\frac{m}{r}$ smoothness of $H^{r,\gamma}$
\begin{equation*}
\begin{array}{rl}
&H^{r,\gamma}(\bx^{n}_{r,\gamma}) - H^{r,\gamma}(\bx^*_{r,\gamma})
\leq
\langle\nabla H^{r,\gamma}\left(\bx^*_{r,\gamma}\right),\bx^{n}_{r,\gamma} - \bx^{*}_{r,\gamma}\rangle  + \frac{m}{2r}\|\bx^n_{r,\gamma} - \bx^*_{r,\gamma}\|^2_2
\\
&\leq \langle\nabla H^{r,\gamma}\left(\nabla (H^{r,\gamma})^*(\bz_g^{\infty})\right),\nabla (H^{r,\gamma})^*(\bz_g^{n}) - \bx^{*}_{r,\gamma}\rangle 
+ \frac{m}{2r}\|\bx^n_{r,\gamma} - \bx^*_{r,\gamma}\|^2_2
\\
&\leq \langle \bz_g^{\infty}, \nabla (H^{r,\gamma})^*(\bz_g^{n}) - \bx^{*}_{r,\gamma}\rangle + \frac{m}{2r}\|\bx^n_{r,\gamma} - \bx^*_{r,\gamma}\|^2_2,
\end{array}
\end{equation*}
where $\bz^{\infty}_{g}$ is the limit of $\bz_g^n$ and so it is the argminimum of $(H^{r,\gamma})^*$ on $\cRperp$.
By~\eqref{eq: cor: convergence in argument} we have 
\[\frac{m}{2r}\|\bx^n\rgam - \bx^*\rgam\|^2_2
\leq \frac{m}{2r}C_1\left( 1 - \frac{\lminp}{7\lmax} \sqrt{\frac{r\gamma}{1+r\gamma}}\right)^n 
= \frac{m(1+r\gamma)^2}{4r\gamma^2}\left( 1 - \frac{\lminp}{7\lmax} \sqrt{\frac{r\gamma}{1+r\gamma}}\right)^n\]

Let us introduce an orthogonal projection matrix $\bP$ onto the subspace $\cRperp$, i.e., it holds $\bP v = \argmin_{z \in \cRperp} \{v - z\}$
for an arbitrary $v \in (\R^d)^n$. Then matrix $\bP$ is 
\begin{equation}\label{eq:P}
 \bP = \left(\bI_n - \frac{1}{n}\boldOne_n\boldOne_n\transpose\right)\otimes \bI_d,
\end{equation}
where $\bI_n$ denotes $n\times n$ identity matrix, $\boldOne_n = (1,\ldots,1)\in \R^n$, and $\otimes$ is a Kronecker product. Note that
$\bP\transpose\bP = \bP$.

Since $\bz_g^{\infty}\in \cRperp$ and  $\bx^{*}_{r,\gamma}\in \cR$, the first part simplifies to $\langle \bz_g^{\infty}, \bP\nabla (H^{r,\gamma})^*(\bz_g^{n}) \rangle$. 
We may use Lemma 2 in \cite{kovalev2021adom} to get the following estimation 
\begin{equation*}
\begin{array}{rl}
\|\bP \nabla (H^{r,\gamma})^*(\bz_g^{n})\|_2^2 = \|\nabla (H^{r,\gamma})^*(\bz_g^{n})\|_{\bP}^2 
\leq \frac{2}{\theta \lminp}\left((H^{r,\gamma})^*(\bz^n_g) - (H^{r,\gamma})^*(\bz^{n+1}_f)\right).
\end{array}
\end{equation*}
As $\bz^{n+1}_f$ is a non-optimal point of Algorithm~\ref{alg:mod-adom}, this is not greater than
\begin{gather*}
\frac{2}{\theta \lminp}\left((H^{r,\gamma})^*(\bz^n_g) - (H^{r,\gamma})^*(\bz^*)\right)
\\ 
\leq 
\frac{m(1+r\gamma)}{\gamma\theta \lminp}\left\|\bz^n_g - \bz^*\right\|_2^2 
=
\frac{m(1+r\gamma)^2}{\gamma^2}\frac{\lmax}{\lminp}\left\|\bz^n_g - \bz^*\right\|_2^2 
\\
\leq 
\frac{m(1+r\gamma)^2}{2\gamma^2}\frac{\lmax}{\lminp}
\left( 1 - \frac{\lminp}{7\lmax} \sqrt{\frac{r\gamma}{1+r\gamma}}\right)^n
\end{gather*}
and the latter ones follow from the $\frac{m(1+r\gamma)}{\gamma}$ smoothness of $(H^{r,\gamma})^*$ and from the fact that the proof of \cite[Theorem 1]{kovalev2021adom} actually covers the following chain of inequalities:
 \begin{equation*}
     \left\|\nabla H^*(\bz_g^n) - \bx^*\right\|^2_2 \leq \frac{1}{\mu^2}\left\|\bz^n_g - \bz^* \right\|^2_2 \leq C \left( 1 - \frac{\lminp}{7\lmax}\sqrt{\frac{\mu}{L}}\right)^n = \frac{1}{2\mu^2} \left( 1 - \frac{\lminp}{7\lmax}\sqrt{\frac{\mu}{L}}\right)^n.
 \end{equation*}

By our assumption $\|\bz^{\infty}_{g}\|_2 = \|\nabla H^{r,\gamma}(x\rgam^*)\|_2< \sqrt{m}K_{\zeta}$. 
Thus, we obtain 
\begin{equation*}
\begin{array}{rl}
     (H^{r,\gamma})^*(\bx^n_{r,\gamma}) - (H^{r,\gamma})^*(\bx^*_{r,\gamma})
     \\ \leq
         \sqrt{m}K_{\zeta}\frac{\sqrt{m}(1+r\gamma)}{\sqrt{2}\gamma}\sqrt{\frac{\lmax}{\lminp}}
         \left( 1 - \frac{\lminp}{7\lmax}\sqrt{\frac{r\gamma}{1+r\gamma}}\right)^{n/2} + \frac{m(1+r\gamma)^2}{4r\gamma^2}\left( 1 - \frac{\lminp}{7\lmax} \sqrt{\frac{r\gamma}{1+r\gamma}}\right)^n
    \\ \leq \left(\frac{m(1+r\gamma)K_{\zeta}}{\sqrt{2}\gamma}\sqrt{\frac{\lmax}{\lminp}} 
    + \frac{m(1+r\gamma)^2}{4r\gamma^2}\right)
    \left( 1 - \frac{\lminp}{7\lmax}\sqrt{\frac{r\gamma}{1+r\gamma}}\right)^{n/2}. 
    \\ = C_2 \left( 1 - \frac{\lminp}{7\lmax}\sqrt{\frac{r\gamma}{1+r\gamma}}\right)^{n/2}.
\end{array}
\end{equation*}
\end{proof}

\subsection{Final compilation}
\label{proof numb of iterations}

This section completes the proof of Theorem~\ref{theorem: strongly convex functions} and shows Remark \ref{rem: numb of iterations}.

Recall that where $C_1 = \frac{(1+r\gamma)^2}{2\gamma}$ and $$C_2 = \frac{m}{2r}C_1 + \frac{m(1+r\gamma)K_{\zeta}}{\sqrt{2}\gamma}\frac{\lmax}{\lminp} = \frac{m(1+r\gamma)^2}{4r\gamma} + \frac{m(1+r\gamma)K_{\zeta}}{\sqrt{2}\gamma}\frac{\lmax}{\lminp}.$$ By Remark \ref{rem: cond on zeta} and Lemmas \ref{lem: addend 1}, \ref{lem: c2 bound} we see that $F^{\gamma}(\bx^{n}_{r,\gamma}) - F^{\gamma}(\bx^*_{\gamma})< \varepsilon$ if
\begin{align}
\forall \bx\in \{\by\in  \mid \|\by-\bx_{\gamma}^*\|_2<\zeta\} \qquad \|\nabla F^{\gamma}(\bx)\|_2^2&<mK^2_{\zeta}, \label{eq: choice of K}
\\
    \label{eq: valappr cond: xi}
    \sqrt{\frac{rm}{2\gamma}}K_{\zeta}+\sqrt{C_1}\left(1-\frac{\lminp}{7\lmax}\sqrt{\frac{r\gamma}{1+r\gamma}}\right)^{n/2}&\leq \zeta,
    \\
\frac{r}{2(1+r\gamma)}mK^2_{\zeta} &\leq  \varepsilon/2, \label{eq: valappr cond: epsilon-1}
\\
C_2 \left(1 - \frac{\lminp}{7\lmax} \sqrt{\frac{r\gamma}{1+r\gamma}}\right)^{n/2}
&\leq  \varepsilon/2. \label{eq: valappr cond: epsilon-2}
\end{align}

Let $\zeta =\sqrt{\varepsilon / \gamma}$ and let   $r\leq \frac{\varepsilon}{2mK^2_{\zeta}}$. Then \eqref{eq: valappr cond: epsilon-1} holds. If \eqref{eq: valappr cond: epsilon-2} fulfills, then \eqref{eq: valappr cond: xi} follows from \eqref{eq: valappr cond: epsilon-1} and \eqref{eq: valappr cond: epsilon-2} as $\sqrt{\frac{rm}{2\gamma}}K_{\zeta}\leq \sqrt{\frac{\varepsilon}{2\gamma}} \leq \zeta/\sqrt{2}$  and  $\sqrt{C_1}\left(1-\frac{\lminp}{7\lmax}\sqrt{\frac{r\gamma}{1+r\gamma}}\right)^{n/2}\leq \zeta/2$ since $1\leq \sqrt{C_1}\leq C_1\leq C_2$ and $\varepsilon\leq \sqrt{\varepsilon/\gamma} = \zeta$. 
Thus, it suffices to assume
\begin{align*}
    \forall i \quad \forall x\in \lbrace y\in S \mid \|y-x_{\gamma}^*\|_2^2\leq \varepsilon/ \gamma \rbrace \qquad \|\nabla f_i^{\gamma}(x)\|_2&\leq K,
    \\
    r&\leq \frac{\varepsilon}{2mK^2}, 
    \\
    C_2
\left(1 - \frac{\lminp}{7\lmax} \sqrt{\frac{r\gamma}{1+r\gamma}}\right)^{n/2}
&\leq \varepsilon/2.
\end{align*}
So $\varepsilon$ approximation requires a number of iteration $$\cO\left(\frac{\lmax}{\lminp}\sqrt{\frac{1+r\gamma}{r\gamma}}\ln \frac{C_2}{\varepsilon}\right) = \cO\left(\frac{\lmax}{\lminp}\frac{1}{\sqrt{\gamma\varepsilon}}\ln \frac{1}{\varepsilon}\right).$$

\section{Proof of Theorem \ref{th: wasserstein}}
\label{proof wb throrem}

To prove Theorem \ref{th: wasserstein} we combine proved Theorem \ref{theorem: strongly convex functions} with features of the entropy regularization of the Wasserstein barycenter problem.

\subsection{Entropy regularized WB problem}
\label{sec:Entropy regularized WB problem}

Recall that for a fixed cost matrix $M$  we define the set of {\it transport plans}  as
		\begin{align*}
		U(p,q)  := \left\lbrace X \in \mathbb{R}_+^{d \times d} \mid X \boldOne = p, X^T\boldOne = q \right\rbrace
		\end{align*}
and {\it Wasserstein distance} between two probability distributions $p$ and $q$ as
\[\cW (p,q)  := \min_{X \in U(p,q)}  \langle M, X \rangle. \]
The entropy regularized (or smoothed) Wasserstein distance is defined as 
	\begin{align}
	\label{eq:wass_distance}
		\cW_{\gamma} (p,q)  := \min_{X \in U(p,q)} \left\lbrace  \langle M, X \rangle - \gamma E(X)\right\rbrace,
		\end{align}
where $\gamma>0$ and
	\begin{align}
		E(X) := - \sum_{i=1}^{d} \sum_{j=1}^{d} e(X_{ij}), \nonumber\\ \text{where }
		e(x)=\left\{
		\begin{array}{ll}
		     x\ln x \quad & \mbox{if } x>0 \\
		     0 \quad & \mbox{if } x=0.
		\end{array}\right.
		\end{align}
So it seeks to minimize the transportation costs while maximizing the entropy. 
Moreover $\cW_{\gamma}(p,q)\to \cW(p,q)$ as $\gamma\to 0$.

 Then the convex optimization problem \eqref{w_barycenter} can be relaxed to the following  $\gamma$ strongly convex optimization problem %
		\begin{align}
		\label{w_gamma_barycenter}
			 \min_{p \in \simplex}\sum\limits_{i=1}^{m} \cW_{\gamma,q_i}(p),
			\end{align}
where $\cW_{\gamma,q_i}(p) = \cW_{\gamma}(q_i, p)$. The argminimum of \eqref{w_gamma_barycenter} is called the uniform Wasserstein barycenter~\cite{agueh2011barycenters,cuturi2014fast} of the family of  $q_1,\ldots, q_m$.
Moreover, problem \eqref{w_gamma_barycenter} admits a unique solution and approximates unregularized WB problem as follows.
\begin{remark}
\label{rem: entropy approximation}
Let $\gamma\leq\frac{\varepsilon}{4}\ln d$.
If vectors $\hat{p}_i\in\simplex$ are such that 
\[\sum\limits_{i=1}^{m}\cW_{\gamma,q_i}(\hat{p}_i) - \min\limits_{p\in \simplex}\sum\limits_{i=1}^{m}\cW_{\gamma,q_i}(p) \leq \frac{\varepsilon}{2},\]
 then
\[\sum\limits_{i=1}^{m}\cW_{q_i}(\hat{p}_i) - \min\limits_{p\in \simplex}\sum\limits_{i=1}^{m}\cW_{q_i}(p)\leq  \varepsilon.\]
\end{remark}
 Indeed, as entropy is bounded we have $\cW_{q_i}(p)\leq \cW_{\gamma, q_i}(p)\leq \cW_{q_i}(p) + 2\gamma  \ln d$  for all $i$ and $p$. Then,  
 for $p^* = \argmin\limits_{p\in\simplex}\sum\limits_{i=1}^{m}\cW_{q_i}(p) $ and $p^*_{\gamma} = \argmin\limits_{p\in\simplex}\sum\limits_{i=1}^{m}\cW_{\gamma, q_i}(p) $ it holds that 
\begin{align*}
     \sum\limits_{i=1}^{m}\cW_{q_i}(\hat{p}_i) -  \sum\limits_{i=1}^{m}\cW_{q_i}(p^*) 
    \\ \leq
     \sum\limits_{i=1}^{m}\cW_{\gamma, q_i}(\hat{p}) -  \sum\limits_{i=1}^{m}\cW_{\gamma, q_i}(p^*) + 2\gamma \ln d 
    \\
    \leq  \sum\limits_{i=1}^{m}\cW_{\gamma, q_i}(\hat{p}) -  \sum\limits_{i=1}^{m}\cW_{\gamma, q_i}(p^*_{\gamma}) + \frac{\varepsilon}{2} \leq \varepsilon.
\end{align*}

\subsection{Legendre transforms}
One particular advantage of entropy regularization of the Wasserstein distance is that it yields closed-form representations for the dual function $\cW^*_{\gamma, q}(\cdot)$ and for its gradient. %
Recall that  the Fenchel-Legendre transform of~\eqref{eq:wass_distance} is defined as
		\begin{align}\label{eq:dual_wass}
		\cW^*_{\gamma,q}(z) & := \max_{p \in \simplex}\left\lbrace  \left\langle z,p\right\rangle  - {\it \cW_{\gamma,q} (p)}\right\rbrace .
		\end{align}
	\begin{theorem}[{\cite{cuturi2015smoothed}[Theorem 2.4]}]\label{thm:cuturi}
		For $\gamma >0$, the Fenchel-Legendre dual function $\cW^*_{\gamma,q}(z)$ is differentiable
		\begin{equation}
		\label{eq-W*}
		    \begin{array}{rl}
		    \cW^*_{\gamma,q}(z) & = \gamma\left(E(q) + \left\langle q, \ln \cK \alpha \right\rangle  \right)  
			\\&= - \gamma\left\langle q,\ln q \right\rangle +  \gamma\sum\limits_{j=1}^{m} [q]_j \ln\left(\sum\limits_{i=1}^{m}\exp \left(\frac{1}{\gamma}\left([z]_i-M_{ji}\right)\right)\right)  
		    \end{array}
		    \end{equation}
		    and its gradient $\nabla \cW^*_{\gamma,q}(z)$ is $1/\gamma$-Lipschitz {\it in the 2-norm} with
		\begin{equation}
		\label{eq-grad W*}
		    \begin{array}{rl}
		    \nabla \cW^*_{\gamma,q}(z)  &= \alpha \circ \left(\cK \cdot {q}/({\cK \alpha})\right) \in \simplex, %
			\\ \left[ \nabla \cW^*_{\gamma,q}(z) \right]_l & =\sum\limits_{j=1}^{m} [q]_j \frac{\exp\left(\frac{1}{\gamma}([z]_l - M_{lj})\right)}{\sum\limits_{i=1}^{m}\exp\left(\frac{1}{\gamma}([z]_i - M_{ij})\right)}.
		    \end{array}
	\end{equation}
		where $z \in \mathbb{R}^n$ and for brevity we denote $\alpha = \exp( {z}/{\gamma}) $ and $\cK = \exp\left( {-M}/{\gamma }\right)$.
	\end{theorem}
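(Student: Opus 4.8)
The plan is to unfold the definition \eqref{eq:dual_wass}, merge the outer maximization over $p\in\simplex$ with the inner minimization over the transport polytope, solve the resulting separable entropic program in closed form, and then read off both the value \eqref{eq-W*} and the gradient \eqref{eq-grad W*}; the Lipschitz bound is then obtained by a direct second-order estimate. First I would substitute \eqref{eq:wass_distance} into \eqref{eq:dual_wass} and use $-\min = \max(-\,\cdot\,)$ to express $\cW^*_{\gamma,q}(z)$ as a single joint maximization over the pair $(p,X)$ with $X\in U(p,q)$ and $p\in\simplex$. Since the marginal constraint $X\boldOne = p$ already determines $p$, and since $X^\top\boldOne = q$ with $q\in\simplex$ forces $\boldOne^\top X\boldOne = 1$ and hence $p = X\boldOne\in\simplex$ automatically, the variable $p$ can be eliminated; using $\langle z,X\boldOne\rangle = \sum_{l,j}[z]_l X_{lj}$ and $\langle M,X\rangle = \sum_{l,j}M_{lj}X_{lj}$ the problem reduces to
\[
\cW^*_{\gamma,q}(z) = \max_{X\geq 0,\; X^\top\boldOne = q}\ \sum_{l,j}X_{lj}\big([z]_l - M_{lj}\big) + \gamma E(X).
\]

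The key observation is that this objective is separable across the columns of $X$: the coupling to $q$ fixes only each column sum $\sum_l X_{lj} = [q]_j$, and there is no row constraint. For each fixed $j$ I would solve the strictly concave program $\max\{\sum_l X_{lj}([z]_l - M_{lj}) - \gamma\sum_l X_{lj}\ln X_{lj} : X_{lj}\geq 0,\ \sum_l X_{lj} = [q]_j\}$. Writing the Lagrangian in the column-sum multiplier and setting the derivative to zero yields the Gibbs form
\[
X^*_{lj} = [q]_j\,\frac{\exp\!\big(\tfrac1\gamma([z]_l - M_{lj})\big)}{\sum_i \exp\!\big(\tfrac1\gamma([z]_i - M_{ij})\big)},
\]
whose entries are strictly positive, so the nonnegativity constraints are inactive and the $x\ln x$-at-zero convention causes no trouble. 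Substituting $X^*$ back, the terms $\sum_{l,j}X^*_{lj}([z]_l - M_{lj})$ cancel against the corresponding part of $-\gamma\ln X^*_{lj}$, and using $\sum_l X^*_{lj} = [q]_j$ the remainder collapses to $-\gamma\langle q,\ln q\rangle + \gamma\sum_j [q]_j\ln\!\big(\sum_i\exp(\tfrac1\gamma([z]_i - M_{ij}))\big)$; symmetry $M_{ij} = M_{ji}$ gives exactly \eqref{eq-W*}, which in the notation $\alpha = \exp(z/\gamma)$, $\cK = \exp(-M/\gamma)$ reads $\gamma(E(q) + \langle q,\ln \cK\alpha\rangle)$.

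For the gradient, the objective in \eqref{eq:dual_wass} is strictly concave in $p$ because entropic regularization makes $\cW_{\gamma,q}$ strictly convex, so the maximizer $p^*(z)$ is unique; Danskin's theorem then gives $\nabla\cW^*_{\gamma,q}(z) = p^*(z) = X^*\boldOne$, i.e. $[\nabla\cW^*_{\gamma,q}(z)]_l = \sum_j X^*_{lj}$, which is precisely \eqref{eq-grad W*} (equivalently, one differentiates the log-sum-exp value formula directly). For the $1/\gamma$-Lipschitz claim I would compute the Hessian of the closed form. Writing $\pi^{(j)}(z)$ for the softmax vector with entries $\exp(\tfrac1\gamma([z]_l - M_{lj}))/\sum_i\exp(\tfrac1\gamma([z]_i - M_{ij}))$, each summand contributes $\tfrac1\gamma(\mathrm{diag}(\pi^{(j)}) - \pi^{(j)}(\pi^{(j)})^\top)$, so that
\[
\nabla^2\cW^*_{\gamma,q}(z) = \tfrac1\gamma\sum_{j}[q]_j\big(\mathrm{diag}(\pi^{(j)}) - \pi^{(j)}(\pi^{(j)})^\top\big).
\]
Each matrix in the sum is the covariance of the categorical law $\pi^{(j)}$: for a unit vector $v$ one has $v^\top(\mathrm{diag}(\pi^{(j)}) - \pi^{(j)}(\pi^{(j)})^\top)v = \mathrm{Var}_{\pi^{(j)}}(v)\in[0,1]$, hence $0\preceq \mathrm{diag}(\pi^{(j)}) - \pi^{(j)}(\pi^{(j)})^\top\preceq I$. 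Since $[q]_j\geq 0$ and $\sum_j[q]_j = 1$, this convex combination obeys $0\preceq \nabla^2\cW^*_{\gamma,q}(z)\preceq \tfrac1\gamma I$, which yields $\|\nabla\cW^*_{\gamma,q}(z) - \nabla\cW^*_{\gamma,q}(z')\|_2\leq \tfrac1\gamma\|z - z'\|_2$.

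The main obstacle is the reduction step: merging the max over $p$ with the min over $X$ and eliminating $p$ requires verifying that the simplex constraint on $p$ is genuinely implied by the marginal constraints on $X$, and that the strict concavity of the entropic objective both legitimizes Danskin's theorem and guarantees differentiability. I would also emphasize that the \emph{sharp} constant $1/\gamma$ does not follow from the generic duality ``the conjugate of a $\gamma$-strongly-convex function has $1/\gamma$-Lipschitz gradient'': partial minimization over $X$ degrades the strong-convexity modulus of $p\mapsto\cW_{\gamma,q}(p)$, so only the direct Hessian bound above delivers the exact constant.
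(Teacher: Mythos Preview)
Your derivation is correct and self-contained. Note that the paper does not actually prove this result: it is quoted as Theorem~2.4 of \cite{cuturi2015smoothed}, so there is no in-paper argument to compare against. Your route---eliminating $p$ via the marginal identity $p=X\boldOne$, solving the resulting column-separable entropic program in closed form, reading off the gradient via Danskin, and then bounding the Hessian as a $q$-weighted convex combination of softmax covariance matrices---is the standard one and is entirely sound. Your closing remark, that the sharp constant $1/\gamma$ is most cleanly obtained from the explicit Hessian bound rather than from the generic ``conjugate of a $\gamma$-strongly-convex function'' principle applied to $p\mapsto\cW_{\gamma,q}(p)$, is a valid and useful observation.
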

Notice that to get back and obtain the approximated barycenter we can employ the following result (with $\lambda_i = 1$).
\begin{theorem}[{\cite{cuturi2015smoothed}[Theorem 3.1]}] \label{th-barycenter-as-dual-gradient}
    The barycenter $p^*$ solving \eqref{w_gamma_barycenter} satisfies
\begin{equation*}
    \forall i=1,\ldots,m \qquad p^* =\nabla\cW^*_{\gamma,q_i}(z^*_i), 
\end{equation*}
where the set of $z^*_i$ constitutes any solution of any smoothed dual WB problem:
\begin{equation*}
    \min\limits_{z_1,\ldots,z_m\in \R^d} \sum\limits_{i=1}^{m}\lambda_i\cW^*_{\gamma,q_i}(z_i) \quad \mbox{s.t.} \quad \sum\limits_{i=1}^{m}\lambda_i z_i = 0.
\end{equation*}
\end{theorem}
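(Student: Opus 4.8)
The plan is to instantiate Theorem~\ref{theorem: strongly convex functions} with $f_i^{\gamma} = \cW_{\gamma,q_i}$ and then convert the resulting guarantee on the entropy-regularized objective into one on the original barycenter objective \eqref{w_barycenter}. First I would verify the hypotheses on $S=\simplex$: each $\cW_{\gamma,q_i}$ is differentiable on the simplex and is $\gamma$ strongly convex, the latter because its conjugate $\cW^*_{\gamma,q_i}$ has a $\tfrac1\gamma$-Lipschitz gradient by Theorem~\ref{thm:cuturi}, and the conjugate of a $\tfrac1\gamma$-smooth closed convex function is $\gamma$ strongly convex (handled exactly as the extension $\tilde f$ argument of Appendix~\ref{proof main theorem}). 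The gradient formula \eqref{eq: th: wasserstein} then follows by substituting the closed form $\nabla\cW^*_{\gamma,q_i}$ of \eqref{eq-grad W*} into Line~\ref{eq: theorem declaration nablah^*} of Algorithm~\ref{alg:mod-adom}, since $\nabla h_i^*(z)=\nabla(f_i^{\gamma})^*(z)+rz=\nabla\cW^*_{\gamma,q_i}(z)+rz$, which lands (approximately) in the simplex consistently with Theorem~\ref{th-barycenter-as-dual-gradient}.

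Next I would assemble the error into three pieces. Applying the value-approximation bound \eqref{eq: theorem: value approximation} controls the regularized suboptimality $\sum_i\cW_{\gamma,q_i}([\bx^n\rgam]_i)-\min_{p\in\simplex}\sum_i\cW_{\gamma,q_i}(p)$ by a Moreau-smoothing term of order $\tfrac{r}{1+r\gamma}mK^2$ plus the geometric term $C(1-\tfrac{\lminp}{7\lmax}\sqrt{r\gamma/(1+r\gamma)})^{n/2}$, where I use $\min_{\bx\in\cS}F^{\gamma}=\min_{p\in\simplex}\sum_i\cW_{\gamma,q_i}(p)$. Then Remark~\ref{rem: entropy approximation}, which rests on $\cW_{q_i}(p)\le\cW_{\gamma,q_i}(p)\le\cW_{q_i}(p)+2\gamma\ln d$ (from $0\le E(X)\le 2\ln d$), converts this to a bound on the unregularized gap at the cost of the additive $2\gamma\ln d$. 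The stated estimate is \emph{two-sided}: the upper direction on $\sum_i\cW_{q_i}([\bx^n\rgam]_i)-\sum_i\cW_{q_i}(p^*)$ uses value plus entropy bounds, while the lower direction (the $[\bx^n\rgam]_i$ need not coincide, so the sum could undershoot the common-point optimum) additionally invokes the consensus bound \eqref{eq: theorem: consensus} together with continuity of $\cW_{q_i}$.

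The main obstacle is making the constant $K$ explicit, i.e. bounding $\|\nabla\cW_{\gamma,q_i}(p)\|_2$ uniformly over the $\varepsilon/\gamma$-neighborhood of the regularized barycenter and identifying it with $K^2=\sum_{j=1}^{d}\bigl(2\gamma\ln d+\inf_i\sup_l|M_{jl}-M_{il}|-\gamma\ln\tfrac{\delta}{2}\bigr)^2$. Here $\nabla_p\cW_{\gamma,q_i}(p)$ is the optimal dual potential for the marginal constraint $X^{\top}\boldOne=p$, equivalently $\nabla\cW_{\gamma,q_i}=(\nabla\cW^*_{\gamma,q_i})^{-1}$ with $\nabla\cW^*_{\gamma,q_i}$ the softmax map of \eqref{eq-grad W*}. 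Inverting that relation expresses each potential coordinate through $\gamma\ln p_l$ and the costs $M_{lj}$; Assumption~\ref{assumption} ($[q_i]_l\ge\delta$) together with a coordinate lower bound $p_l\ge\delta/2$ on the near-barycenter keeps the logarithms bounded and yields the per-coordinate estimate $|[\nabla\cW_{\gamma,q_i}(p)]_j|\le 2\gamma\ln d+\inf_i\sup_l|M_{jl}-M_{il}|-\gamma\ln\tfrac{\delta}{2}$, whence $K$. Establishing $p_l\ge\delta/2$ on the relevant neighborhood (entropic smoothing forces strictly positive mass, inherited from the lower bound $\delta$ on the inputs) is the delicate step.

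Finally I would verify the parameter choices and read off the rate. Choosing $\gamma$ of order $\varepsilon/\ln d$ makes $2\gamma\ln d$ a fixed fraction of $\varepsilon$ and meets the smallness condition of Remark~\ref{rem: entropy approximation}, while $r=\tfrac{\varepsilon}{4mK^2}$ makes the Moreau term a fraction of $\varepsilon$; the remaining budget is spent on the geometric term, requiring $n=\cO\bigl(\tfrac{\lmax}{\lminp}\sqrt{\tfrac{1+r\gamma}{r\gamma}}\ln\tfrac{C}{\varepsilon}\bigr)$ iterations. Since $r=\Theta(\varepsilon)$ and $\gamma=\Theta(\varepsilon)$ give $r\gamma=\Theta(\varepsilon^2)$, so that $\sqrt{(1+r\gamma)/(r\gamma)}=\Theta(1/\varepsilon)$, this collapses to $n=\cO\bigl(\tfrac{\lmax}{\lminp}\tfrac{1}{\varepsilon}\ln\tfrac1\varepsilon\bigr)$, as claimed.
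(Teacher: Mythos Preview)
Your proposal does not address the stated theorem. Theorem~\ref{th-barycenter-as-dual-gradient} is the cited duality result from Cuturi--Doucet: it asserts that the entropy-regularized barycenter $p^*$ coincides with $\nabla\cW^*_{\gamma,q_i}(z_i^*)$ for each $i$, where the $z_i^*$ solve the dual problem under the zero-sum constraint. This is a statement about primal--dual optimality in a single static optimization problem; it involves no algorithm, no iterates, no communication matrices, and no convergence rate. Your entire write-up --- instantiating Theorem~\ref{theorem: strongly convex functions}, splitting the error into Moreau-smoothing, entropy, and geometric pieces, bounding $K$, and reading off $n=\cO(\tfrac{\lmax}{\lminp}\tfrac{1}{\varepsilon}\ln\tfrac{1}{\varepsilon})$ --- is a proof sketch of Theorem~\ref{th: wasserstein}, not of Theorem~\ref{th-barycenter-as-dual-gradient}.

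The paper itself does not prove Theorem~\ref{th-barycenter-as-dual-gradient}; it is quoted from \cite{cuturi2015smoothed} and used only as background for interpreting $\nabla(H^{r,\gamma})^*$. A correct argument would proceed by Fenchel or Lagrangian duality applied to $\min_{p\in\simplex}\sum_i\lambda_i\cW_{\gamma,q_i}(p)$: introduce copies $p_i$ with the consensus constraint $p_1=\cdots=p_m$, form the Lagrangian with multipliers $z_i$ satisfying $\sum_i\lambda_i z_i=0$, recognize the inner maximization as $\sum_i\lambda_i\cW^*_{\gamma,q_i}(z_i)$, and then invoke the differentiability of $\cW^*_{\gamma,q_i}$ (Theorem~\ref{thm:cuturi}) together with the Fenchel equality $p^*=\nabla\cW^*_{\gamma,q_i}(z_i^*)$ at optimality. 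None of this appears in your proposal.
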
	

Thus we can apply Theorem \ref{theorem: strongly convex functions} for the problem~\eqref{w_gamma_barycenter} with explicitly defined $\nabla \cW^*_{\gamma, q_i}$ and obtain $\bx^n_{r,\gamma}$ that satisfies
\begin{align*}
 \sum\limits_{i=1}^{m}\cW_{\gamma,q_i}([\bx^n_{r,\gamma}]_i) - \min\limits_{p\in \simplex} \sum\limits_{i=1}^{m}\cW_{\gamma,q_i}(p) 
\\
\leq \frac{r}{4(1+r\gamma)}mK^2 + \frac{1}{2}C_2\left(1 - \frac{\lminp}{7\lmax}\sqrt{\frac{r\gamma}{1+r\gamma}}\right)^{n/2}\leq \varepsilon/2.    
\end{align*}
By Remark \ref{rem: entropy approximation} it proves \begin{align*}
    \left|\sum\limits_{i=1}^m \cW_{q_i}([\bx^{n}_{r,\gamma}]_i) - \sum\limits_{i=1}^m \cW_{q_i}([\bp^*]_i)\right|
    \\
    \leq
    2\gamma \ln d + \frac{r}{4(1+r\gamma)}mK^2 + C \left(1- \frac{\lminp}{7\lmax} \sqrt{\frac{r\gamma}{1+r\gamma}}\right)^{n}/2\leq \varepsilon,
\end{align*} for 
$C=\frac{1}{2}C_2 
= \frac{(1+r\gamma)mK_{\zeta}}{2\sqrt{2}\gamma}\sqrt{\frac{\lmax}{\lminp}}
    + \frac{(1+r\gamma)^2}{8r\gamma^2}$.

\subsection{Parameter estimation}

It remains to assign $\zeta>0$ and $K = K_{\zeta}$ satisfying \eqref{eq: choice of K}. Due to  Assumption \ref{assumption} such $\zeta$ and $K$ exist. 
\begin{proposition}
\label{prop-uniform_bound_on_grad_W}
Let a set $\{q_i\}_{i=1}^m$ satisfies Assumption \ref{assumption}, let $p^*_{\gamma}$ be the uniform Wasserstein barycenter of $\{q_i\}_{i=1}^m$, and let $\zeta \in \left(0, \min\{\frac{1}{e}, \ \min_{i,l} [q_i]_l\}\right)$. For each $i = 1,\ldots,m$  the norm of the gradient  $\|\nabla\cW_{\gamma,q_i}(\cdot)\|_2^2$  is uniformly bounded over $\{p\in \simplex \mid \|p-p^*_{\gamma} \|_2^2 \leq \zeta\};$ and the bound $K_{\rho}$ is given in \eqref{eq-bound_K_rho} for $\rho \leq   \min\{\frac{1}{e}, \ \min_{i,l} [q_i]_l\}-\zeta.$
\end{proposition}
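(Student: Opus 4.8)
The plan is to compute a representative of the gradient of the primal map $\cW_{\gamma,q_i}$ from the closed-form dual gradient \eqref{eq-grad W*}, and then to bound its norm in terms of a uniform lower bound on the components of $p$. Since $\cW_{\gamma,q_i}$ is convex, proper and lower semicontinuous on $\simplex$ (value function of a convex problem, extended by $+\infty$ off $\simplex$), it equals its biconjugate, so $\cW_{\gamma,q_i}(p)=\sup_z\{\langle z,p\rangle-\cW^*_{\gamma,q_i}(z)\}$, and a vector $z$ attains this supremum iff $p=\nabla\cW^*_{\gamma,q_i}(z)$, in which case $z\in\partial\cW_{\gamma,q_i}(p)$. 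Because $\cW^*_{\gamma,q_i}(z+t\boldOne)=\cW^*_{\gamma,q_i}(z)+t$ (read off from \eqref{eq-W*}), such a $z$ is unique up to translation along $\boldOne$, i.e. up to the normal cone of $\simplex$; it therefore suffices to bound $\|z\|_2$ for one representative with $p=\nabla\cW^*_{\gamma,q_i}(z)$, and the zero-mean representative has the smallest norm among them, so bounding it suffices.

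First I would invert \eqref{eq-grad W*}. Writing $\alpha=\exp(z/\gamma)$ and $\cK=\exp(-M/\gamma)$, that formula reads $[p]_l=\alpha_l\,c_l$ with $c_l:=\sum_j[q_i]_j\,\cK_{lj}/\big(\sum_k\cK_{kj}\alpha_k\big)>0$, hence $[z]_l=\gamma\ln[p]_l-\gamma\ln c_l$. Normalizing the translation so that $\max_k[z]_k=0$ (equivalently $\alpha_k\le1$ for all $k$, with equality for some index $k^{\star}$) forces every $[z]_l\le0$, so it remains to bound $-[z]_l$ from above, i.e. $\ln c_l$ from above and $\ln[p]_l$ from below. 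Using $0\le\cK_{kj}\le1$ and $\cK_{jj}=1$ one gets $\sum_k\cK_{kj}\alpha_k\ge\cK_{k^{\star}j}\alpha_{k^{\star}}=\exp(-M_{k^{\star}j}/\gamma)\ge\exp(-\max_kM_{kj}/\gamma)$, whence $c_l\le\sum_j[q_i]_j\exp\big((\max_kM_{kj}-M_{lj})/\gamma\big)\le\exp\big(\max_j(\max_kM_{kj}-M_{lj})/\gamma\big)$ since $\sum_j[q_i]_j=1$. Combined with $\ln[p]_l\ge\ln\rho$ whenever $\min_l[p]_l\ge\rho$, this yields the componentwise estimate $|[z]_l|\le\max_j(\max_kM_{kj}-M_{lj})+\gamma\ln(1/\rho)$, and summing the squares over $l$ gives the explicit bound \eqref{eq-bound_K_rho} for $\|\nabla\cW_{\gamma,q_i}(p)\|_2^2$ valid on $\{p\in\simplex\mid\min_l[p]_l\ge\rho\}$.

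Finally I would check that $p$ staying in the prescribed neighborhood of $p^*_\gamma$ keeps $\min_l[p]_l$ above the threshold: from $\|p-p^*_\gamma\|_2^2\le\zeta$ one has $\min_l[p]_l\ge\min_l[p^*_\gamma]_l-\|p-p^*_\gamma\|_\infty$, so what is really needed is a uniform positive lower bound on the components of the entropic barycenter itself, namely $\min_l[p^*_\gamma]_l\ge\min\{1/e,\min_{i,l}[q_i]_l\}$. This is where Assumption \ref{assumption} enters: writing $p^*_\gamma$ as the first marginal of the optimal entropic transport plan for some $q_i$ — equivalently using $p^*_\gamma=\nabla\cW^*_{\gamma,q_i}(z_i^{\star})$ from Theorem \ref{th-barycenter-as-dual-gradient} together with $\sum_iz_i^{\star}=0$ — one argues that no atom can receive barycenter mass below the smallest marginal weight $\delta=\min_{i,l}[q_i]_l$, so that for any $\rho\le\min\{1/e,\delta\}-\zeta$ the bound of the previous paragraph applies throughout the neighborhood, with $K_\rho=K_\zeta$ feeding into \eqref{eq: choice of K}. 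I expect this last point to be the main obstacle: a priori a dual coordinate $[z_i^{\star}]_l$ could be arbitrarily negative, which would make $[p^*_\gamma]_l$ — and hence the $\gamma\ln(1/\min_l[p]_l)$ term in the gradient estimate — blow up, so one has to rule this out uniformly, combining the constraint $\sum_iz_i^{\star}=0$ with the structural facts $M_{jj}=0$ and $0\le\cK\le1$ used above.
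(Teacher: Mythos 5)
Your first half is fine and is in fact a more self-contained route than the paper's: where the paper simply cites Lemma~3.5 of \cite{bigot2019data} for the bound $\|\nabla\cW_{\gamma,q}(p)\|_2^2\leq K_\rho$ on $\{p\in\simplex\mid \min_l [p]_l\geq\rho\}$, you re-derive an equivalent estimate by inverting \eqref{eq-grad W*}, normalizing the dual variable so that $\max_k[z]_k=0$, and bounding $-[z]_l$ by $\max_j\bigl(\max_k M_{kj}-M_{lj}\bigr)+\gamma\ln(1/\rho)$. That computation is correct and recovers a bound of the same form as \eqref{eq-bound_K_rho}.

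The genuine gap is in the second half, and you name it yourself: the whole proposition reduces to showing that the entropic barycenter is uniformly bounded away from the boundary of the simplex, $\min_l[p^*_\gamma]_l\geq\min\{\tfrac1e,\ \min_{i,l}[q_i]_l\}$, and you leave this as ``one argues that no atom can receive barycenter mass below the smallest marginal weight,'' flagging it as the main obstacle without resolving it. This is precisely the content of the paper's Lemma~\ref{lem-p*_is_bounded_away_from_zero}, and it is the only nontrivial step of the proof. The paper does \emph{not} prove it through the dual route you gesture at (controlling $z_i^\star$ via $\sum_i z_i^\star=0$); it uses a primal exchange argument: assuming some coordinate $k$ of $p^*_\gamma$ falls below the threshold, one transfers a small mass $\delta$ between two coordinates of $p^*_\gamma$, modifies each optimal plan $X_i^*$ at finitely many entries so that the new matrices are feasible plans for the perturbed marginal (the entropy term guarantees $X_i^*>0$ entrywise, so a small $\delta$ is admissible), and then uses the zero diagonal of $M$ together with the strict monotonicity of $x\mapsto x\ln x$ on $(0,\tfrac1e)$ --- this is exactly where the $\tfrac1e$ in the threshold comes from --- to show the objective strictly decreases, contradicting optimality of $p^*_\gamma$. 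Without this lemma (or a completed version of your dual argument, which is not obviously closable since a single dual coordinate $[z_i^\star]_l$ is a priori unbounded below), the neighborhood $\{\|p-p^*_\gamma\|_2^2\leq\zeta\}$ cannot be shown to lie in $\{\min_l[p]_l\geq\rho\}$ for any positive $\rho$, and the uniform bound $K_\rho$ does not follow.
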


We obtain  Proposition \ref{prop-uniform_bound_on_grad_W} as a combination of Lemma \ref{lem-bound_K_rho}  from \cite{bigot2019data} and proved below Lemma \ref{lem-p*_is_bounded_away_from_zero}.

\begin{lemma}[{\cite[Lemma 3.5]{bigot2019data}}]
\label{lem-bound_K_rho}
    For any $\rho\in (0,1)$, $q\in \simplex$, and $p\in \{x\in \simplex \mid \min_l x_l\geq \rho \}$ there is a bound: $\|\nabla \cW_{\gamma, q}(p) \|^2_2\leq K_{\rho}$, where
    \begin{equation}
        \label{eq-bound_K_rho}
        K_{\rho} = \sum\limits_{j=1}^{d}\left( 2\gamma\ln d + \inf_i\sup_l |M_{jl} - M_{il}| - \gamma\ln \rho \right)^2.
    \end{equation}
\end{lemma}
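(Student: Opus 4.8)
The plan is to identify $\nabla\cW_{\gamma,q}(p)$ with the optimal Kantorovich potential of the entropic transport problem and then bound each of its coordinates by $B_j := 2\gamma\ln d + \inf_i\sup_l|M_{jl}-M_{il}| - \gamma\ln\rho$, so that $\|\nabla\cW_{\gamma,q}(p)\|_2^2 = \sum_{j=1}^d [\nabla\cW_{\gamma,q}(p)]_j^2 \le \sum_{j=1}^d B_j^2 = K_\rho$. First I would recall that $\cW_{\gamma,q}(p) = \min_{X\in U(p,q)}\{\langle M,X\rangle - \gamma E(X)\}$ is the value of a strictly convex problem, so its minimizer $X^*$ is unique and has the Gibbs form $X^*_{jl} = \exp((u^*_j + v^*_l - M_{jl})/\gamma - 1)$, where $u^*$ and $v^*$ are the multipliers for the marginal constraints $X\boldOne = p$ and $X^\top\boldOne = q$. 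By the envelope (Danskin) theorem the derivative with respect to the first marginal is exactly the associated multiplier, i.e. $\nabla\cW_{\gamma,q}(p) = u^*$; this is the representative consistent with the explicit conjugate-gradient formula of Theorem~\ref{thm:cuturi} through the identity $\nabla\cW^*_{\gamma,q}(\nabla\cW_{\gamma,q}(p)) = p$.

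Next I would extract the marginal fixed-point equations. Summing the Gibbs form over $l$ and imposing $\sum_l X^*_{jl} = p_j$ gives the closed form $u^*_j = \gamma\ln p_j + \gamma - \gamma\ln\sum_l \exp((v^*_l - M_{jl})/\gamma)$, and symmetrically $v^*_l = \gamma\ln q_l + \gamma - \gamma\ln\sum_j \exp((u^*_j - M_{jl})/\gamma)$. Substituting the second into the first eliminates $v^*$ and yields $u^*_j = \gamma\ln p_j - \gamma\ln\sum_l q_l\,\exp(-M_{jl}/\gamma)/D_l$, where $D_l = \sum_k \exp((u^*_k - M_{kl})/\gamma)$ is the column normalizer. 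The term $\gamma\ln p_j$ is where the hypothesis $\min_l p_l\ge\rho$ enters: it forces $\gamma\ln\rho \le \gamma\ln p_j \le 0$, supplying the $-\gamma\ln\rho$ contribution to $B_j$.

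The core estimate is the control of the normalizing sum. For any reference row $i$ I would keep a single summand, $D_l \ge \exp((u^*_i - M_{il})/\gamma)$, which replaces the cost by a difference and gives $\sum_l q_l\exp(-M_{jl}/\gamma)/D_l \le \exp(-u^*_i/\gamma)\sum_l q_l\exp((M_{il}-M_{jl})/\gamma) \le \exp(-u^*_i/\gamma)\exp(\sup_l|M_{jl}-M_{il}|/\gamma)$, using $\sum_l q_l = 1$. Taking logarithms and optimizing over the reference $i$ produces the $\inf_i\sup_l|M_{jl}-M_{il}|$ term, while the two logarithmic-sum reductions (over the $d$ columns in the row equation and over the $d$ rows inside $D_l$) each leak at most $\gamma\ln d$, accounting for the $2\gamma\ln d$ term. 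Collecting the three contributions gives $|u^*_j|\le B_j$; squaring and summing over $j$ gives the claim.

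The hard part will be the additive gauge invariance $(u^*,v^*)\mapsto(u^*+c\boldOne,\,v^*-c\boldOne)$, which leaves $X^*$, the marginals, and the softmax formula untouched, so each coordinate $u^*_j$ is only determined once the representative is fixed. The reference-row comparison above yields the lower side of the coordinate bound cleanly and gauge-consistently, but the matching upper bound requires controlling $D_l$ from above, i.e. controlling the largest potential coordinate; pinning down the exact constant $B_j$ therefore rests on using the canonical normalization induced by the explicit gradient formula (the balanced Sinkhorn potentials) and propagating it uniformly through both marginal equations. The remaining steps — the soft-min slack of at most $\gamma\ln d$ and the monotone comparison of the exponential sums — are routine.
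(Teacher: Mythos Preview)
The paper does not prove this lemma at all: it is quoted verbatim as \cite[Lemma~3.5]{bigot2019data} and used as a black box in the proof of Proposition~\ref{prop-uniform_bound_on_grad_W}. There is therefore no ``paper's own proof'' to compare your proposal against.

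On the substance of your argument: the reduction to bounding the optimal potential $u^*$ coordinatewise is the right idea, and the single-term lower bound $D_l\ge \exp((u^*_i-M_{il})/\gamma)$ together with $\sum_l q_l=1$ does produce the cost-difference and $\gamma\ln d$ contributions on one side. But the gap you flag yourself is real and not closed. The pair $(u^*,v^*)$ is determined only up to $(u^*+c\boldOne,\,v^*-c\boldOne)$, and the conjugate formula in Theorem~\ref{thm:cuturi} is itself invariant under $z\mapsto z+c\boldOne$, so ``the canonical normalization induced by the explicit gradient formula'' does not by itself pick out a representative. Your lower-bound step introduces a free $u^*_i$ on the right, which is fine for bounding oscillations $u^*_j-u^*_i$ but not for an absolute bound $|u^*_j|\le B_j$; the matching upper bound on $D_l$ requires the same object, so the argument is circular until a concrete gauge choice is made and shown to satisfy the two-sided estimate. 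To complete the proof you would need to specify the normalization actually used for $\nabla\cW_{\gamma,q}$ (for instance, the one for which the potentials are balanced or for which a distinguished coordinate vanishes) and carry that choice explicitly through both marginal equations; the original reference \cite{bigot2019data} is where that detail lives.
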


\begin{lemma}
\label{lem-p*_is_bounded_away_from_zero}
Let a set $\{q_i\}_{i=1}^m$ satisfies Assumption \ref{assumption}, let $p^*_{\gamma}$ be the uniform Wasserstein barycenter of~$\{q_i\}_{i=1}^m$.  All components $k$ of $p^*_{\gamma}$ have a uniform positive lower bound: $[p^*_{\gamma}]_k\geq \min\{\frac{1}{e}, \ \min_{i,l} [q_i]_l\}$. 
\end{lemma}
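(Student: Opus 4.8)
## Proof plan for Lemma~\ref{lem-p*_is_bounded_away_from_zero}

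The plan is to exploit the explicit formula for the gradient of the dual entropic Wasserstein distance from Theorem~\ref{thm:cuturi} together with the barycenter characterization of Theorem~\ref{th-barycenter-as-dual-gradient}. By Theorem~\ref{th-barycenter-as-dual-gradient} (with all $\lambda_i = 1$), there exist dual vectors $z_1^*,\ldots,z_m^*\in\R^d$ with $\sum_{i=1}^m z_i^* = 0$ such that $p^*_{\gamma} = \nabla\cW^*_{\gamma,q_i}(z_i^*)$ for every $i$. Using the closed form \eqref{eq-grad W*}, the $k$-th component is
\begin{equation*}
[p^*_{\gamma}]_k = \sum_{j=1}^d [q_i]_j\,\frac{\exp\left(\frac{1}{\gamma}([z_i^*]_k - M_{kj})\right)}{\sum_{l=1}^d \exp\left(\frac{1}{\gamma}([z_i^*]_l - M_{lj})\right)}.
\end{equation*}
Each summand is a convex combination weight times $[q_i]_j$, so the first thing I would record is that $[p^*_\gamma]_k$ is a weighted average (over $j$, with weights $[q_i]_j$) of the softmax probabilities $\pi^{(i)}_{k,j} := \exp(\tfrac1\gamma([z_i^*]_k - M_{kj}))/\sum_l \exp(\tfrac1\gamma([z_i^*]_l - M_{lj}))$.

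The key step is to produce, for at least one index $i$, a lower bound on the softmax weights $\pi^{(i)}_{k,j}$ that is uniform in $j$ and then average against $[q_i]$. The natural device: pick the node $i$ achieving (or an approximate argmax of) something like $\max_i [z_i^*]_k$, or more robustly, observe that since $\sum_i z_i^* = 0$, there is some $i$ with $[z_i^*]_k \ge 0 \ge \frac1m\sum_l[z_i^*]_l$-type inequality — actually I would pick $i$ so that $[z_i^*]_k \ge [z_i^*]_l$ is not available, so instead I would argue as follows. For each fixed $j$, the diagonal of $M$ is zero, so the $l=j$ term in the denominator is $\exp(\tfrac1\gamma [z_i^*]_j)$, while the $l=k$ term in the numerator-building sum is at most $\exp(\tfrac1\gamma[z_i^*]_k)$ times $\exp(-\tfrac1\gamma M_{kj})$. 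The clean route is: since $\sum_{i=1}^m z_i^* = 0$, for every coordinate $k$ there is an index $i=i(k)$ with $[z_{i}^*]_k \ge [z_{i}^*]_l$ for \emph{all} $l$ is too strong; instead there is $i$ with $[z_i^*]_k \ge \frac1m\sum_{i'}[z_{i'}^*]_k = 0$ — but we need it relative to the other coordinates $l$. I would therefore sum the identity $p^*_\gamma = \nabla\cW^*_{\gamma,q_i}(z_i^*)$ over $i$, or better: average it. Summing over $i$ and using $\sum_i z_i^* = 0$ doesn't linearize through the softmax, so the honest approach is: for the single coordinate $k$, choose $i$ maximizing $[z_i^*]_k - \max_l [z_i^*]_l$... this is getting delicate, and \textbf{this is exactly the main obstacle} — extracting a coordinatewise lower bound from the fixed-point/softmax structure.

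The way I would resolve it: fix $k$, and among $i=1,\ldots,m$ pick $i^\star$ with $[z_{i^\star}^*]_k = \max_i [z_i^*]_k$. Because the softmaxes all equal $p^*_\gamma$, I can also use a \emph{different} pivot: note that for any $i$ and any $j$,
\begin{equation*}
\pi^{(i)}_{k,j} = \frac{\exp(\tfrac1\gamma([z_i^*]_k - M_{kj}))}{\sum_l \exp(\tfrac1\gamma([z_i^*]_l - M_{lj}))} \ge \frac{\exp(\tfrac1\gamma[z_i^*]_k)}{\sum_l \exp(\tfrac1\gamma[z_i^*]_l)},
\end{equation*}
since $M_{kj}\ge 0$ in the numerator only hurts, while $M_{lj}\ge 0$ in the denominator only helps — wait, removing $M_{lj}$ from the denominator \emph{increases} it, so that inequality goes the wrong way; I'd instead bound $M_{lj} \le \|M\|_\infty$ uniformly to get $\pi^{(i)}_{k,j} \ge \exp(\tfrac1\gamma([z_i^*]_k - \|M\|_\infty))/\sum_l\exp(\tfrac1\gamma[z_i^*]_l)$, hence $[p^*_\gamma]_k \ge e^{-\|M\|_\infty/\gamma}\,\mathrm{softmax}_k(z_i^*/\gamma)$ for every $i$, and then choose $i$ maximizing the softmax. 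This only gives a $\gamma$-dependent bound, not the clean $\min\{1/e,\min_{i,l}[q_i]_l\}$ claimed — so the actual proof must be sharper and avoid the uniform $M$ bound.

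Therefore, the correct final plan is: for fixed $k$, pick $i$ with $[z_i^*]_k\ge[z_i^*]_l$ for all $l$ — I claim such $i$ \emph{does} exist, because if for every $i$ some coordinate $l(i)\ne k$ has $[z_i^*]_{l(i)} > [z_i^*]_k$, one derives a contradiction with $\sum_i z_i^* = 0$ combined with the fact that all $\nabla\cW^*_{\gamma,q_i}(z_i^*)$ coincide (a coordinate that is never the argmax gets small mass everywhere, forcing $[p^*_\gamma]_k$ small, but then the constraint $\sum_k [p^*_\gamma]_k = 1$ and symmetry of the roles of coordinates must be reconciled). Granting the existence of such $i$: then for every $j$, the denominator $\sum_l\exp(\tfrac1\gamma([z_i^*]_l - M_{lj})) \le d\exp(\tfrac1\gamma([z_i^*]_k - M_{kj}))\cdot\exp(\tfrac1\gamma M_{kj})$... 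I would instead use $[z_i^*]_l - M_{lj} \le [z_i^*]_k - M_{kj} + M_{kj} - M_{lj}$ and bound by the diagonal term $l=k$ plus the diagonal term $l=j$ where $M_{jj}=0$. The cleanest: the $l=j$ term of the denominator is $\exp(\tfrac1\gamma[z_i^*]_j) \le \exp(\tfrac1\gamma[z_i^*]_k)$, and the $l=k$ term is $\exp(\tfrac1\gamma([z_i^*]_k - M_{kj}))$; comparing, $\pi^{(i)}_{k,j} \ge \exp(-\tfrac1\gamma M_{kj})/\sum_l\exp(\tfrac1\gamma([z_i^*]_l-[z_i^*]_k - M_{lj}))$ and each bracket in the sum is $\le 1$ when $l=k$ gives... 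I will stop hedging: the write-up will, I expect, show $[p^*_\gamma]_k \ge \sum_j [q_i]_j \cdot (\text{weight} \ge \min(1, \text{something}))$, and by concavity of $t\mapsto$ the relevant expression together with the entropy term contributing the $1/e$, arrive at $[p^*_\gamma]_k \ge \min\{e^{-1},\ \min_{i,l}[q_i]_l\}$.

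\textbf{Summary of steps, in order:}
\begin{enumerate}
\item Invoke Theorem~\ref{th-barycenter-as-dual-gradient} to get $z_i^*$ with $\sum_i z_i^* = 0$ and $p^*_\gamma = \nabla\cW^*_{\gamma,q_i}(z_i^*)$ for all $i$; write out the coordinate formula \eqref{eq-grad W*}.
\item Fix a coordinate $k$. Using $\sum_i z_i^* = 0$, select an index $i$ for which $[z_i^*]_k$ is maximal among the coordinates of $z_i^*$ (establish existence via a contradiction argument using $\sum_k[p^*_\gamma]_k=1$ and that all the softmaxes agree).
\item For that $i$ and every $j$, lower-bound the softmax weight $\pi^{(i)}_{k,j}$ using $M_{kk}=0$, $M_{jj}=0$, $M\ge 0$, and the maximality of $[z_i^*]_k$; the $\ln d$-type terms from the denominator produce the $e^{-1}$ threshold.
\item Average the resulting bound against $[q_i]_j\ge\delta\ge\min_{i,l}[q_i]_l$ and conclude $[p^*_\gamma]_k \ge \min\{e^{-1},\min_{i,l}[q_i]_l\}$; since $k$ was arbitrary, done.
\end{enumerate}
The main obstacle, as flagged, is Step~2/3 — turning the fixed-point relation into a coordinatewise lower bound cleanly enough to land the stated constant $\min\{1/e,\min_{i,l}[q_i]_l\}$ rather than a weaker $\gamma$-dependent one.
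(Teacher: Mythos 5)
There is a genuine gap: your proposal is a plan with several admittedly unresolved obstacles, not a proof, and the pivotal step does not survive scrutiny. Step 2 asks for an index $i$ such that $[z_i^*]_k \ge [z_i^*]_l$ for \emph{all} $l$, for an arbitrary fixed coordinate $k$; you only assert this "via a contradiction argument" that you never carry out, and in fact the claim cannot hold for every $k$ simultaneously when $d>m$ (each node $i$ can have at most one argmax coordinate, so at most $m$ of the $d$ coordinates can be covered). Step 3 is likewise open: you note yourself that the natural inequalities go the wrong way, and the only bound you actually derive is the $\gamma$-dependent one $[p^*_\gamma]_k \ge e^{-\|M\|_\infty/\gamma}\cdot(\cdot)$, which you correctly recognize is weaker than the claimed constant $\min\{e^{-1},\min_{i,l}[q_i]_l\}$. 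Finally, the mechanism that is supposed to produce the $1/e$ threshold ("the $\ln d$-type terms from the denominator") is never exhibited; nothing in the dual softmax formula obviously yields $1/e$.

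For contrast, the paper proves the lemma by a primal perturbation argument that avoids the dual entirely. Suppose some coordinate $k$ has $[p^*_\gamma]_k < \min\{e^{-1},\min_{i,l}[q_i]_l\}$; then some other coordinate $n$ exceeds $\min_i[q_i]_n$. One shifts a small mass $\delta$ between these two coordinates of $p^*_\gamma$ and correspondingly modifies each optimal plan $X_i^*$ at only four entries (the entries indexed by $k$ and $n$), chosen so that row and column marginals are preserved. Because the entropic regularization forces all off-diagonal entries of $X_i^*$ to be strictly positive, such a $\delta>0$ exists; because the diagonal of $M$ is zero, the linear cost does not increase; and because $x\mapsto x\ln x$ is decreasing on $(0,1/e)$ — this is where the $1/e$ in the statement comes from — the entropy term strictly improves. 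This contradicts optimality of $p^*_\gamma$. If you want to salvage your dual route you would need a genuinely new idea to replace Step 2; otherwise the perturbation argument is the workable path.
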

\begin{proof}
Let $X^*_i$ denote the optimal transport plan between $p^*_{\gamma}$ and $q_i$. Assume the contrary: there is $k$ such that $[p^*_{\gamma}]_k < \min\{\frac{1}{e}, \ \min_{i,l} [q_i]_l\}$. Then there is another component $n$ such that $[p^*_{\gamma}]_n>\min_i [q_i]_n > \min_{i,l} [q_i]_l$. 
Consider the vector $p$ that consists of $[p]_i = [p^*_{\gamma}]_i$ except for the components $[p]_n = [p^*_{\gamma}]_n+\delta$ and $[p]_l = [p^*_{\gamma}]_l-\delta$, where $\delta>0$ is less than $\min_{i,a\not=b}[X_i^*]_{a,b}$ of the optimal transport plans $X^*_i$ between $p^*_{\gamma}$ and $q_n$. Because of the entropy, all these optimal transport plans contain only positive non-diagonal elements, so such a $\delta$ exists. 

Construct now non-optimal transport plans between $p$ and each of $q_i$ in order to get the contradiction with the assumption.  Initially we have $\cW_{\gamma,q_i}(p^*_{\gamma}) = \langle C, X^*_i \rangle - \gamma X^*_i \ln X^*_i$. 
Consider the matrix $X_i$ that differs from $X^*_i$ only at four elements: 
\begin{align*}
          [X_i]_{kk} = [X_i^*]_{kk} +\frac{1}{2}\delta,  \quad [X_i]_{kn} = [X_i^*]_{kn} +\frac{1}{2}\delta,
        \\ [X_i]_{nn} = [X_i^*]_{nn} +\frac{1}{2}\delta, \quad  [X_i]_{nk} = [X_i^*]_{nk} +\frac{1}{2}\delta.
\end{align*}
Then $X_i$ is a transport plan between $p$ and $q_i$ since its elements are positive and also $X_i \boldOne = p$ and $X_i\transpose \boldOne = q_i$. Using the monotonicity of entropy on the interval $(0,\frac{1}{e})$ and the assumption that diagonal elements of the cost matrix $C$ are zero, we get  for each~$i$: 
\begin{equation*}
    \begin{array}{rl}
        \cW_{\gamma,q_i}(p) \leq & \langle C, X_i \rangle - \gamma X_i \ln X_i  
        \\ =&  \langle C, X^*_i \rangle - \gamma X^*_i \ln X^*_i  +\frac{1}{2}\delta C_{kn}-\frac{1}{2}\delta C_{nk} 
        \\ +& ([X_i]_{kk}\ln [X_i]_{kk} - [X^*_i]_{kk}\ln [X^*_i]_{kk})
        \\ +& ([X_i]_{kn}\ln [X_i]_{kn} - [X^*_i]_{kn}\ln [X^*_i]_{kn}) 
        \\ +& ([X_i]_{nk}\ln [X_i]_{nk} - [X^*_i]_{nk}\ln [X^*_i]_{nk}) 
        \\ +& ([X_i]_{nn}\ln [X_i]_{nn} - [X^*_i]_{nn}\ln [X^*_i]_{nn}) 
        \\ <&  \langle C, X^*_i \rangle - \gamma X^*_i \ln X^*_i  +\frac{1}{2}\delta C_{kn}-\frac{1}{2}\delta C_{nk}
        \\ =&  \langle C, X^*_i \rangle - \gamma X^*_i \ln X^*_i  = \cW_{\gamma,q_i}(p^*_{\gamma}).
    \end{array}
\end{equation*}
The obtained inequalities $\cW_{\gamma,q_i}(p)<\cW_{\gamma,q_i}(p^*_{\gamma})$ contradict to the fact that $p^*_{\gamma}$ is the barycenter; this proves the lemma. 
\end{proof}

\end{document}